\definecolor{darkgreen}{rgb}{0,0.5,0}
\tikzset{curve/.style={settings={#1},to path={(\tikztostart)
    .. controls ($(\tikztostart)!\pv{pos}!(\tikztotarget)!\pv{height}!270:(\tikztotarget)$)
    and ($(\tikztostart)!1-\pv{pos}!(\tikztotarget)!\pv{height}!270:(\tikztotarget)$)
    .. (\tikztotarget)\tikztonodes}},
    settings/.code={\tikzset{quiver/.cd,#1}
        \def\pv##1{\pgfkeysvalueof{/tikz/quiver/##1}}},
    quiver/.cd,pos/.initial=0.35,height/.initial=0}
\tikzset{tail reversed/.code={\pgfsetarrowsstart{tikzcd to}}}
\tikzset{2tail/.code={\pgfsetarrowsstart{Implies[reversed]}}}
\tikzset{2tail reversed/.code={\pgfsetarrowsstart{Implies}}}
\tikzset{no body/.style={/tikz/dash pattern=on 0 off 1mm}}
\newtheorem{theorem}{Theorem}[section]
\newtheorem{lemma}[theorem]{Lemma}
\newtheorem{proposition}[theorem]{Proposition}
\newtheorem{question}[theorem]{Question}
\newtheorem{corollary}[theorem]{Corollary}
\theoremstyle{definition}
\newtheorem{definition}[theorem]{Definition}
\newtheorem{example}[theorem]{Example}
\newtheorem{remark}[theorem]{Remark}
\def\H{\mathcal{H}}
\def\W{\scaleto{W}{3pt}}
\def\spinc {{\operatorname{spin^c}}}
\def\s{\mathfrak s}
\def\x{\mathbf{x}}
\newcommand\alphas{\boldsymbol\alpha}
\newcommand\betas{\boldsymbol\beta}
\newcommand\gammas{\boldsymbol\gamma}
\newcommand{\G}{{\Theta^{spin,\tau}_\mathbb{Q}}}
\newcommand{\Z}{\mathbb{Z}}
\let\int\relax
\newcommand{\int}{\mathring}
\DeclareMathSymbol{\wtilde}{\mathord}{largesymbols}{"65}
\title{Knot Floer homology and surgery on equivariant knots}
\author{Abhishek Mallick}
\address{Max-Planck-Institut f\"ur Mathematik, Vivatsgasse 7, 53111 Bonn, Germany}
\email{mallick@mpim-bonn.mpg.de}
\begin{document}

\maketitle

\newcommand{\scal}{{\rm scal}}
\newcommand{\dt}{\cdot}
\newcommand{\ep}{\epsilon}
\newcommand{\sg}{\sigma}
\newcommand{\Om}{\Omega}
\newcommand{\Mdis}[1]{M^\amalg_{#1}}
\newcommand{\Mcon}[1]{M^\#_{#1}}
\newcommand{\Ndis}[1]{N^\amalg_{#1}}
\newcommand{\Ncon}[1]{N^\#_{#1}}
\newcommand{\FreeHS}{\Z\{RHS^3\}}
\newcommand{\tn}{\otimes}
\newcommand{\Ha}{\mathbb{H}}
\newcommand{\so}{\mathfrak so}
\newcommand{\tens}[1]{%
  \mathbin{\mathop{\otimes}\limits_{#1}}%
}

\mathchardef\mhyphen="2D

\begin{abstract}
Given an equivariant knot $K$ of order $2$, we study the induced action of the symmetry on the knot Floer homology. We relate this action with the induced action of the symmetry on the Heegaard Floer homology of large surgeries on $K$. This surgery formula can be thought of as an equivariant analog of the involutive large surgery formula proved by Hendricks and Manolescu. As a consequence, we obtain that for certain double branched covers of $S^{3}$ and corks, the induced action of the involution on Heegaard Floer homology can be identified with an action on the knot Floer homology. As an application, we calculate equivariant correction terms which are invariants of a generalized version of the spin rational homology cobordism group, and define two knot concordance invariants. We also compute the action of the symmetry on the knot Floer complex of $K$ for several equivariant knots. 
\end{abstract}



\section{Introduction}\label{intro}

Let $K$ be a knot in $S^{3}$ and $\tau$ be an orientation preserving diffeomorphism of order $2$ of $S^{3}$ which fixes $K$ setwise. We refer to such pairs $(K,\tau)$ as an \textit{equivariant knot} (of order $2$), where the restriction of $\tau$ to $K$ acts as a symmetry of $K$.  
\noindent When the fixed set of $\tau$ is $S^{1}$, it can either intersect $K$ in two points or be disjoint from $K$. If there are two fixed points on $K$, we refer to $(K, \tau)$ as \textit{strongly invertible} and if the fixed point set is disjoint from $K$, we call $(K,\tau)$ \textit{periodic}. It is well-known that Dehn surgery on such equivariant knots $(K,\tau)$ induce an involution $\tau$ on the surgered $3$-manifold \cite{Montesinos1975}. In particular, Montesinos \cite{Montesinos1975} showed that a $3$-manifold is a double branched covering of $S^{3}$ if and only if it can be obtained as surgery on a strongly invertible link (defined similarly as knots). In fact, it follows from \cite{Montesinos1975} that in such cases one can identify the covering involution with the induced involution from the symmetry of the link on the surgered manifold. More generally, \cite[Theorem 1.1]{sakuma2001surgery}  proved an equivariant version of the Lickorish–Wallace theorem \cite{lickorish1962representation, wallace1960modifications}, namely he showed any finite order orientation preserving diffeomorphism on a compact $3$-manfifold can always be interpreted as being induced from surgery on an equivariant link with integral framing. Hence $3$-manifolds with involutions are in one-one correspondence with surgeries on equivariant links.

Involutions on $3$-manifolds can be quite useful in studying various objects in low-dimensional topology. For example, recently Alfieri-Kang-Stipsicz \cite{AKS} used the double branched covering action to define a knot concordance invariant, which takes the covering involution into account in an essential way. Another instance of $3$-manifolds being naturally equipped with involution are corks, which play an important part in studying exotic smooth structures on smooth compact $4$-manifolds. In \cite{dai2020corks} Dai-Hedden and the author studied several corks that can be obtained as surgery on a strongly invertible knot where the cork-twist involution corresponds to the induced action of the symmetry on the surgered manifold. In both the studies \cite{AKS} and \cite{dai2020corks}, the key tool was to understand the induced action of the involution on the Heegaard Floer chain complex of the underlying $3$-manifolds. On the other side of the coin, in \cite{abhishek2022equivariant} Dai-Stoffrengen and the author showed the action of an involution on the knot Floer complex of an equivariant knot can be used to produce equivariant concordance invariants which bound equivariant $4$-genus of equivariant knots, and it can be used to detect exotic slice disks. 

\subsection{Equivariant surgery formula}
In light of the usefulness (in both equivariant and non-equivariant settings) of studying the action of an involution on $3$-manfiolds and knots through the lens of Heegaard Floer homology, it is a natural question to ask whether we can connect a bridge between the $3$-manifolds and knots perspective with the overarching goal of understanding one from the other. The present article aims to establish such correspondence in an appropriate sense, which we describe now.

\begin{theorem}\label{t1}
Let $(K,\tau, w,z)$ be a doubly-based equivariant knot of order $2$ with the symmetry $\tau$ and let $p \in \mathbb{Z}$. Then for all $p\geq g(K)$, there exists a chain isomorphism $\Gamma^{+}_{p,0}$ so that the following diagram commutes up to chain homotopy, for $\circ \in \{-,+\}$.
\[
 \begin{tikzcd}[row sep =large]
{CF}^{\circ}(S^{3}_{p}(K),[0]) \arrow{r}{\tau} \arrow{d}{\Gamma^{\circ}_{(w,z)}} & {CF}^{\circ}(S^{3}_{p}(K),[0]) \arrow{d}{\Gamma^{\circ}_{(w,z)}} \\
{A^{\circ}_{0}(K)} \arrow{r}{\tau_K}  & {A^{\circ}_{0}(K)} 
\end{tikzcd}
\]

\end{theorem}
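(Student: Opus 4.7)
The plan is to adapt Ozsv\'ath--Szab\'o's proof of the large surgery formula to the equivariant setting, tracking the $\tau$-action throughout the construction. The first step is to fix a $\tau$-equivariant doubly-pointed Heegaard diagram $(\Sigma, \alphas, \betas, w, z)$ for $(S^3, K)$. Such a diagram can be arranged because $\tau$ is a finite-order orientation preserving diffeomorphism of $S^3$ preserving $K$: one can find a $\tau$-invariant Heegaard splitting and then a $\tau$-invariant system of attaching curves compatible with the basepoints. In the periodic case the basepoints $w$ and $z$ can be chosen $\tau$-fixed, while in the strongly invertible case $\tau$ must exchange them. Next, I would replace one of the $\beta$-curves meeting the knot by the standard $p$-framed longitude $\lambda_p$, chosen equivariantly, obtaining a Heegaard diagram (and associated triple diagram for the surgery cobordism) on which $\tau$ still acts.

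The content of the theorem is then that on the level of chain complexes built from these equivariant diagrams, both of the induced $\tau$-actions commute with the large surgery identification
\[
\Gamma^{\circ}_{(w,z)}\colon CF^{\circ}(S^{3}_{p}(K),[0]) \xrightarrow{\;\simeq\;} A^{\circ}_{0}(K).
\]
For $p \geq g(K)$, the large surgery formula provides a bijection between intersection points of $\Ta \cap \Tb$ in the surgery diagram lying in the $\spinc$-class $[0]$ and generators of $A^{\circ}_{0}(K)$, together with a corresponding identification of Whitney disks preserving $n_w$ and $n_z$ multiplicities. Since this bijection is defined purely in terms of the equivariant Heegaard data, it is $\tau$-equivariant on the nose. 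A naturality argument then upgrades this to chain-homotopy commutativity of the square for both $\circ = +$ and $\circ = -$: any two $\tau$-equivariant Heegaard data for the same pair are connected by equivariant isotopies, handleslides, and stabilizations, which induce chain homotopic equivariant maps.

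The main obstacle I expect is the careful bookkeeping in the strongly invertible case, where $\tau$ interchanges $w$ and $z$. In this situation, the induced action $\tau_K$ on $A^{\circ}_{0}(K)$ is not merely a permutation of generators but must include the basepoint-swapping contribution coming from the half-grading shift map, and one must verify that the same twist appears inherently in the identification $\Gamma^{\circ}_{(w,z)}$ coming from the surgery diagram. Equivalently, arranging the $p$-framed longitude so that the surgery diagram carries a genuine $\tau$-action while simultaneously accommodating the swap of basepoints requires care, and verifying that the resulting identification respects this combined action is the principal technical step. Once this is handled, the $\circ = -$ statement follows from the $\circ = +$ statement (or vice versa) by the standard compatibility with the $CF^{\infty}$ long exact sequence.
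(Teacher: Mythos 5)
Your overall strategy---fix an equivariant Heegaard diagram, add a surgery longitude, and argue that the triangle-counting map $\Gamma^{\circ}_{(w,z)}$ commutes with the two $\tau$-actions---matches the paper's plan, but several of the steps you outline are either false as stated or rely on results that do not exist, and these are precisely where the actual work of the proof lives.

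First, the claim that in the periodic case the basepoints $w, z$ can be chosen $\tau$-fixed is incorrect: by definition a periodic involution on $(S^3, K)$ has fixed set disjoint from $K$, so $\tau(w) \neq w$ and $\tau(z) \neq z$ for \emph{any} choice of basepoints on $K$. Consequently $\tau$ never fixes the Heegaard data on the nose, and a basepoint-moving isotopy $\rho$ dragging $(\tau w, \tau z)$ back to $(w,z)$ along an arc of $K$ is unavoidable; the commuting square one must actually establish factors through $\rho$ and through a pushforward map $t$, neither of which preserves the original diagram. Your plan contains no account of this map. Second, the surgery longitude $\gamma_g$ cannot simply be "chosen equivariantly": the winding construction that turns a longitude into a $p$-framed surgery curve breaks the symmetry of the diagram. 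The paper handles this by first selecting a genuinely $\tau$-invariant (unwound) longitude $\gamma_0$ lying in the surface, and then applying a sequence of Dehn twists in the winding region that convert $\gamma_g$ into $\gamma_0$ at the cost of winding $\alpha_g$---a step with no counterpart in your outline. Third, and most seriously, your appeal to "equivariant isotopies, handleslides, and stabilizations" connecting any two equivariant Heegaard data is not a theorem: the naturality results of Juh\'asz--Thurston--Zemke concern ordinary Heegaard moves, and the paper does \emph{not} produce an equivariant Heegaard-moves statement. Instead it relates $\H_K$ to $\tau\bar\rho\H_K$ (two genuinely different diagrams for the same doubly based knot) by ordinary moves carefully chosen so that they simultaneously induce moves on the surgery diagram leaving $\gamma_g$ untouched, and then builds the chain homotopy square by square by counting pseudo-holomorphic quadrilaterals with $n_z(\Box)=n_w(\Box)$, following Ozsv\'ath--Szab\'o's associativity argument. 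That quadrilateral count is the substantive content of the proof; "$\tau$-equivariant on the nose" followed by "naturality upgrades this to homotopy commutativity" elides it entirely.

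On the strongly invertible side you correctly flag the interplay between the basepoint swap and the $sw$ map, but your proposal treats it as bookkeeping when it is in fact a separate geometric observation: the paper shows that the rightmost square (comparing the finger-moving isotopy $\rho$ on $CF^{+}$ with the purely algebraic $sw$ map on $A^{+}_0$) commutes because the arc defining $\rho$ misses all $\alpha$- and $\gamma$-curves, so that $\rho$ is tautological on intersection points by Zemke's description of these maps. Without that input the two vertical maps live in different conceptual worlds and there is no reason for the square to close. In summary, the high-level shape of the argument is right, but the periodic basepoint claim is false, the equivariant-longitude claim is false, the equivariant-naturality appeal is unsupported, and the quadrilateral-count and tautological-$\rho$ steps---which together carry the proof---are missing.
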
 
We can interpret Theorem~\ref{t1} as the \textit{(large) equivariant surgery formula} in Heegaard Floer homology. The equivariant concordance invariants $\underline{V}^{\tau}_{0}$ and $\overline{V}^{\tau}_{0}$ that were defined in \cite{abhishek2022equivariant} can be interpreted as the \textit{equivariant correction terms} stemming from the equivariant surgery formula. This surgery formula was also used in  \cite[Theorem 1.6]{abhishek2022equivariant} to show that knot Floer homology detects exotic pairs of slice disks. 

As discussed in the introduction, the identification in Theorem~\ref{t1} includes the following two classes of examples.
\begin{itemize}

\item By Montesinos trick, surgery on a strongly invertible knot is a double cover $\Sigma_2$ of $S^{3}$. When the surgery coefficient is large we can use Theorem~\ref{t1} to identify the covering action on $HF^{\circ}(\Sigma_2)$ with an action of an involution on the knot Floer homology.

\item Let $(K,\tau)$ be an equivariant knot with $g(K)=1$. If $(S^{3}_{+1}(K), \tau)$ is a cork, then the cork-twist action on $HF^{\circ}$ is identified with the action of $\tau_K$ on a sub(quotient)-complex of $CFK^{\infty}(K)$ via the Theorem~\ref{t1}. We can apply this identification to many well-known corks in the literature, they include $(+1)$-surgery on the Stevedore knot, $(+1)$-surgery on the $P(-3,3,-3)$ pretzel knot (also known as the Akbulut cork), and the positron cork \cite{AMexotic, hayden2021corks}. In fact, the identification of the two involution for the positron cork was useful in \cite{abhishek2022equivariant} to re-prove a result due to \cite{hayden2021corks}.

\end{itemize}

We now explain various terms appearing in Theorem~\ref{t1}. 
Let $(Y,z,\s, \tau)$ represent a based $3$-manifold $(Y,z)$ decorated with a $\spinc$ structure, and an orientation preserving involution $\tau$ on $Y$ which fixes $\s$. In \cite{dai2020corks}, it was shown that when $Y$ is a $\mathbb{Q}HS^{3}$, $\tau$ induce an action on the Heegaard Floer chain complex $CF^{\circ}(Y,z,\s)$
\[
\tau:CF^{\circ}(Y,z,\s) \rightarrow CF^{\circ}(Y,z,\s).
\]
In a similar vein, given a doubly-based equivariant knot $(K,\tau, w , z)$, there is an induced action of the symmetry on the Knot Floer chain complex of the knot, 
\[
\tau_{K}: CFK^{\infty}(K,w,z) \rightarrow CFK^{\infty}(K,w,z),
\]
defined using the naturality results by Juh{\'a}sz, Thurston, and Zemke \cite{JTZ} (see Section~\ref{action_knot}).

Let us now cast the actions above in the context of symmetric knots. If we start with an equivariant knot $(K,\tau)$, then as discussed before, the surgered manifold $S^{3}_{p}(K)$ inherits an involution which we also refer to as $\tau$. Theorem~\ref{t1} then identifies the action $\tau_K$ with the action $\tau$ on the level of Heegaard Floer chain complexes \footnote{Here and throughout the paper we will use the $\tau$ and $\tau_K$ denote the action on the $CF^{\circ}$ and $CFK^{\circ}$ respectively.}. In fact, the identification is mediated by the \textit{large surgery isomorohism} map $\Gamma^{+}_{p,0}$, defined by Ozsv{\'a}th-Szab{\'o} and Rasmussen \cite{OSknots, rasmussenthesis}. Specifically, they defined a map
\[
 \Gamma^{+}_{p,s}: CF^{+}(S^{3}_{p}(K),[s]) \rightarrow A^{+}_{s}(K),
 \]
 by counting certain pseudo-holomorphic triangles and showed that it is a chain isomorohism between the Heegaard Floer chain complex $CF^{+}(S^{3}_{p}(K),[s])$ with a certain quotient complex $A^{+}_{s}$ of the knot Floer chain complex $CFK^{\infty}(K)$ (for $|s| \leq p/2$). In this context, \textit{large} means the surgery coefficient sufficiently large compared to the $3$-genus of the knot $K$. $[s]$ represents a $\spinc$-structure on $S^{3}_{p}(K)$ under the standard identification of $\spinc$-structures of $S^{3}_{p}(K)$ with $\mathbb{Z}/p\mathbb{Z}$. 

Theorem~\ref{t1} can also be thought of as an equivariant analog of the large surgery formula in involutive Heegaard Floer homology, proved by Hendricks and Manolescu \cite{HM}, where the authors showed a similar identification holds when we replace the action of involution on the Heegaard Floer chain complexes with the action of the $\spinc$-conjugation.

\begin{figure}[h!]
\centering
\includegraphics[scale=.83]{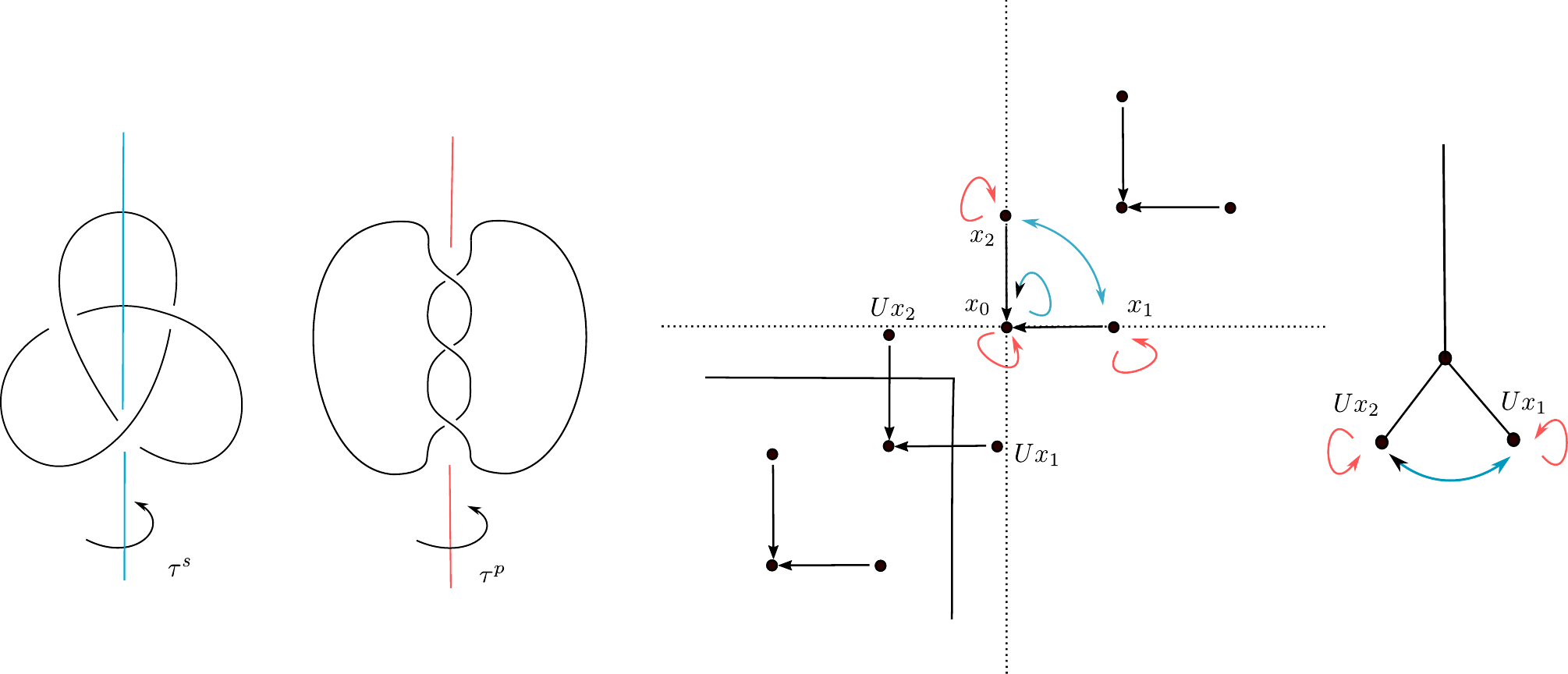} 
\caption
{{Left: $(+1)$-surgery on the left-handed trefoil $\overline{T}_{2,3}$ with two different symmetries, one equipped with the strong involution $\tau^{s}$ (in blue) and the other with a periodic involution $\tau^{p}$ (in red). Middle: $CFK^{\infty}(\overline{T}_{2,3})$ with $\tau^{s}_K$ and $\tau^{p}_K$ action. Right: $HF^{+}(S^{3}_{+1}(\overline{T}_{2,3}))$ with the $\tau^{s}$ and $\tau^{p}$ action.}}\label{trefoilintro}
\end{figure}
\begin{example} 
Let us now consider a rather simple example from Figure~\ref{trefoilintro}, where the identification from Theorem~\ref{t1} is demonstrated. We start with the strong involution $\tau^{s}$ on the left-handed trefoil first. The computation of the action $\tau^{s}_K$ shown in the middle, is rather straightforward and follows from the grading and filtration restrictions (see Proposition~\ref{lspacestrong}). Recall that the $A^{+}_{0}$-complex is defined by looking at the generators $[\x,i,j]$ with $i \geq 0$ and $j \geq 0$, i.e. the generators lying outside of the hook shape in the middle diagram. So by applying Theorem~\ref{t1}, we get the $\tau^{s}$ action on $HF^{+}(S^{3}_{+1}(\overline{T}_{2,3}))$. Note that the involution $\tau^{s}$ is a double-covering involution on $S^{3}_{+1}(\overline{T}_{2,3})= -\Sigma(2,3,7)$. Hence, we have identified a covering action on $HF^{+}(-\Sigma(2,3,7))$ with the action of $\tau^{s}_K$ on $A^{+}_{0}$. In fact, readers may explicitly compute the branching knot, identify it as a Montesinos knot and refer to \cite{AKS} to obtain that the action on $HF^{+}(-\Sigma(2,3,7))$ is exactly as in the right-hand side of Figure~\ref{trefoilintro}. Note however that, identifying the branching knot even in this simple case is not immediate, as the knot in question has $12$ crossings. Readers may also compare the computation above with the work of Hendricks-Lipshitz-Sarkar \cite[Proposition 6.27]{flexible_equivariant}, where the authors computed the $\tau^{s}$ action on the hat-version $\widehat{HF}(S_{+1}(\overline{T}_{2,3}))$ by explicitly writing down the action on the generators using an equivariant Heegaard diagram. In contrast, our approach of computing the action on the $3$-manifold will always be via identifying the action on knot Floer complex of the underlying equivariant knot.

We now look at the periodic involution $\tau^{p}_K$ acting on the left-handed trefoil as in Figure~\ref{trefoilintro}. It is easily seen that the action of $\tau_K$ on $CFK^{\infty}(\overline{T}_{2,3})$ is trivial (Proposition~\ref{lspaceperiodic}), hence by Theorem~\ref{t1}, we get the action of $\tau^{p}$ on $HF^{+}(-\Sigma(2,3,7))$ is trivial. Readers may again check that, for the case in hand the branching set is a $(3,7)$-torus knot, which necessarily imply that the covering action is isotopic to identity justifying the trivial action on $HF^{+}(S^{3}_{+1}(\overline{T}_{2,3})$.

\end{example}

\subsection{Equivariant $\iota$-surgery formula}

In \cite{HM}, Hendricks and Manolescu studied the $\spinc$-conjugation action $\iota$ on a based $3$-manifold $(Y,z)$ equipped with a $\spinc$ structure $\s$.
\[
\iota: CF^{\infty}(Y,z,\s) \rightarrow CF^{\infty}(Y,z,\bar{\s}).
\]
Moreover, given a doubly-based knot $(K,w,z)$ in $S^{3}$, they defined the $\spinc$-conjugation action $\iota_K$ on the knot Floer complex,
\[
\iota_K:CFK^{\infty}(K,w,z) \rightarrow CFK^{\infty}(K,w,z).
\]  
Hendricks and Manolescu, also proved a large surgery formula relating the action $\iota$ for $(S^{3}_{p}(K),[0])$ with the action of $\iota_K$, \cite[Theorem 1.5.]{HM}. 

In the presence of an involution $\tau$ acting on a $\mathbb{Z}HS^{3}$, $Y$, the action 
\[
\iota \circ \tau: CF^{\infty}(Y,z) \rightarrow CF^{\infty}(Y,z)
\]
was studied in \cite{dai2020corks}. In fact, this action turned out to be quite useful in studying such pairs $(Y, \tau)$ and it was shown to contain information that is essentially different from information contained in the $\iota$ and $\tau$ action, see for instance \cite[Lemma 7.3]{dai2020corks}. On the other hand, in \cite{abhishek2022equivariant}, it was shown that the action $\iota_K \circ \tau_K$ is useful to study equivariant knots.   This motivates the question, whether these two aforementioned actions can be identified. Indeed, as a corollary of Theorem~\ref{t1}, we show that we can also identify the action of $\iota \circ \tau$ with that of $\iota_K \circ \tau_K$ on $A^{+}_{0}(K)$ for large surgeries on equivariant knots. More precisely, let $HFI^{+}_{\iota \tau}(Y,\s)$ represent the homology of the mapping cone complex of the map
\[
CF^{+}(Y,\s) \xrightarrow{Q(\mathrm{id} + \iota \tau)}  Q.CF^{+}(Y,\s)[-1].
\]
Where $Q$ is a formal variable such that $Q^{2}=0$ and $[-1]$ denotes a shift in grading.  Similarly, given a symmetric knot $(K,\tau)$, let $AI^{+, \iota \tau}_{0}(K)$ represent the mapping cone chain complex of the map 
 \[
 A^{+}_{0}(K) \xrightarrow{Q(\mathrm{id} + \iota_K \tau_{K})} Q.A^{+}_{0}(K)[-1].
\] 
Here, we use $\tau_{K}$ to represent the map induced from $\tau_{K}$ on the quotient complex $A^{+}_{0}$. As a Corollary of Theorem~\ref{t1}, we have:
\begin{corollary}\label{corollary}
Let $(K,\tau, w,z)$ be a doubly-based equivariant knot. Then for all $p\geq g(K)$, as a relatively graded $\mathbb{Z}_{2}[U,Q]/(Q^{2})$-module we have the identification,
\begin{center}
$HFI^{+}_{\iota \tau}(S^{3}_{p}(K),[0]) \cong H_{*}(AI^{+,\iota \tau}_{0}).$
\end{center}
\end{corollary}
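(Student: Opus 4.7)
The plan is to derive the Corollary by combining Theorem~\ref{t1} with the large surgery formula of Hendricks--Manolescu for the $\iota$-action. The overarching principle is that if $\phi: C \to C'$ is a chain isomorphism and $f: C \to C$, $g: C' \to C'$ are chain maps with $\phi f \simeq g \phi$, then $\phi$ induces a quasi-isomorphism $\mathrm{Cone}(\mathrm{id}+f) \to \mathrm{Cone}(\mathrm{id}+g)$. So it suffices to establish the chain-homotopy commutativity
\[
\Gamma^{+}_{p,0} \circ (\iota \tau) \;\simeq\; (\iota_K \tau_K) \circ \Gamma^{+}_{p,0}
\]
as self-maps of the appropriate complex.

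First I would recall that in \cite{HM}, Hendricks and Manolescu established exactly the $\iota$-version of this square: $\Gamma^{+}_{p,0} \circ \iota \simeq \iota_K \circ \Gamma^{+}_{p,0}$ (up to possibly identifying $A^{+}_{0}$ with $A^{+}_{0}$ via $\spinc$-conjugation, which is harmless in the self-conjugate $\s = [0]$ setting). Second, Theorem~\ref{t1} furnishes the $\tau$-version of the square: $\Gamma^{+}_{p,0} \circ \tau \simeq \tau_K \circ \Gamma^{+}_{p,0}$. Composing these two chain homotopies — first pushing $\tau$ across $\Gamma^{+}_{p,0}$ and then pushing $\iota$ across — yields the desired chain homotopy for $\iota \tau$. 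Explicitly, if $H_\tau$ and $H_\iota$ are the two chain homotopies, then $H_\iota \circ \tau + \iota_K \circ H_\tau$ (with appropriate signs) provides a chain homotopy between $\Gamma^{+}_{p,0}(\iota\tau)$ and $(\iota_K\tau_K)\Gamma^{+}_{p,0}$.

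With the square commuting up to chain homotopy, I would then assemble the vertical map between the two mapping cones. Writing $\CFIp_{\iota\tau}(S^3_p(K),[0])$ as the mapping cone of $Q(\mathrm{id}+\iota\tau)$ on $\CFp(S^3_p(K),[0])$ and $AI^{+,\iota\tau}_{0}(K)$ as the mapping cone of $Q(\mathrm{id}+\iota_K\tau_K)$ on $A^+_{0}(K)$, the natural candidate map is
\[
(\Gamma^{+}_{p,0}, \, Q \Gamma^{+}_{p,0}[-1], \, H),
\]
where the off-diagonal component $H$ is built from the chain homotopy above. Standard mapping cone lemmas then guarantee this is a chain map, and because $\Gamma^{+}_{p,0}$ is a chain isomorphism on both the base and the $Q$-shifted copy, the five-lemma applied to the associated short exact sequences of mapping cones shows that the induced map on homology is an isomorphism, yielding the claimed identification of relatively graded $\ff[U,Q]/(Q^2)$-modules.

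The main obstacle is ensuring that the two chain homotopies from Theorem~\ref{t1} and from \cite{HM} can genuinely be composed in a compatible way: one must verify that the identifications of $\spinc$ structures, the conventions for the basepoints $(w,z)$ under the symmetry, and the gradings (including the $Q$-shift) are consistent between the two sources, and that the homotopies respect the $\ff[U]$-module structure used to define the cone. Once this bookkeeping is in place, the rest is a formal consequence of the mapping cone formalism.
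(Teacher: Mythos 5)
Your approach matches the paper's own proof: both combine Hendricks--Manolescu's homotopy $\Gamma^+_{p,0}\circ\iota \simeq \iota_K\circ\Gamma^+_{p,0}$ with the homotopy $\Gamma^+_{p,0}\circ\tau \simeq \tau_K\circ\Gamma^+_{p,0}$ established in the proof of Theorem~\ref{t1}, compose the two to obtain $\Gamma^+_{p,0}\circ\iota\tau \simeq \iota_K\tau_K\circ\Gamma^+_{p,0}$, and then pass to mapping cones. The pivotal point you flag in your last paragraph is exactly what the paper highlights as its one non-formal ingredient — that the \emph{same} map $\Gamma^+_{p,0}$ appears in both commuting squares.
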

\noindent
By casting the example from Figure~\ref{t1} in this context, we see the colors of the actions are switched. Namely, $\iota_K \circ \tau^{s}_{K}$ acts as the red action, while $\iota_K \circ \tau^{p}_{K}$ acts as the blue action, and likewise for the $3$-manifold action.
\subsection{Rational homology bordism group of involutions.}
Surgeries on equivariant knots $S^{3}_{p}(K)$ are examples of $\mathbb{Q}HS^{3}$, equipped with an involution $\tau$. One may wish to study such pairs in general subject to some equivalence. Namely, one can study pairs $(Y,\tau)$ (where $Y$ is a $\mathbb{Q}HS^{3}$ and $\tau$ is an involution acting on it), modulo rational homology bordism. Naively, rational homology bordisms are rational homology cobordisms equipped with a diffeomorphism restricting to the boundary diffeomorphisms.  

The general bordism group $\Delta_{3}$ (without any homological restrictions on the cobordism) has been well-studied in the past by Kreck \cite{Kreck}, Melvin \cite{Melvin}, and Bonahon \cite{Bonahon}. The object of this group are pairs $(Y,\phi)$, where $Y$ is a $3$-manifold and $\phi$ is a diffeomorphism on it. Two such pairs $(Y_1,\phi_1)$ and $(Y_2,\phi_2)$ are equivalent if there exist a pair $(W,f)$ where $W$ is a cobordism between $Y_1$ and $Y_2$ and $f$ is a diffeomorphism that restrict to $\phi_1$ and $\phi_2$ in the boundary. The equivalence classes form a group under disjoint union. Interestingly, Melvin \cite{Melvin} showed that $\Delta_{3}$ is trivial. However, it is natural to expect a more complicated structure when we put homological restrictions on the cobordism $W$. This parallels the situation for the usual cobordism group in $3$-dimension, where by putting the extra homological restriction to the cobordism one obtains the group $\Theta^{3}_{\mathbb{Z}}$. Such a group,  $\Theta^{\tau}_{\mathbb{Z}}$ was defined in \cite{dai2020corks}, referred as the \textit{homology bordism group of involutions}. $\Theta^{\tau}_{\mathbb{Z}}$ can be thought of as a generalized version of integer homology cobordism group $\Theta^{3}_{\mathbb{Z}}$ by incorporating the involutions on the boundary. In this paper, we define $\G$, which is a generalized version of $\Theta^{spin}_{\mathbb{Q}}$, again by taking into account an involution on the boundary. We refer to $\G$ as the \textit{rational homology bordism group of involutions}. The object of this group are elements are tuples $(Y,\tau,\s)$ where $Y$ is a rational homology sphere, $\s$ is a spin-structure on $Y$ and $\tau$ is an involution on $Y$ which fixes $\s$. We identify two such tuples $(Y_1,\tau_1,\s_1)$ and $(Y_2, \tau_2, \s_2)$ if there exist a tuple  $(W, f, \s_{\W})$, where $W$ is a rational homology cobordism between $Y_1$ and $Y_2$, $f$ is a diffeomorphism of $W$ which restricts to the diffeomorphisms $\tau_1$ and $\tau_2$ on the boundary, and $\s_{\W}$ is a spin-structure on $W$ which restricts to $\s_1$ and $\s_2$ on the boundary. We also require that $f(\s_{\W})=\s_{\W}$. We refer to the equivalence classes of such pairs as the \textit{pseudo $\mathbb{Q}$-homology bordism class}. These classes form a group under disjoint union and we denote this group by $\G$. 

In \cite{dai2020corks} two correction terms were defined $\underline{d}_{\circ}$ and $\bar{d}_{\circ}$ (for $\circ \in \{ \tau, \iota \tau \}$), which may be interpreted as the \textit{equivariant involutive correction terms}. These correction terms are analogous to those defined by Hendricks and Manolescu \cite{HM}. The following Theorem is implicit in  \cite{dai2020corks}:
\begin{theorem}\label{dinvariant}
$\underline{d}_{\tau}$, $\bar{d}_{\tau}$, $\underline{d}_{\iota \tau}$, $\bar{d}_{\iota \tau}$ are invariants of pseudo $\mathbb{Q}$-homology bordism class. Hence, they induce maps
\[
\underline{d}_{\circ},\bar{d}_{\circ}: \G \rightarrow \mathbb{Q}.
\]
Where $\circ \in \{ \tau ,\iota \tau \}$.
\end{theorem}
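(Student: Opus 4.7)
The plan is to show that a pseudo $\mathbb{Q}$-homology bordism $(W, f, \s_{\W})$ from $(Y_1, \tau_1, \s_1)$ to $(Y_2, \tau_2, \s_2)$ induces a local equivalence of the enhanced Floer complexes that intertwines the $\tau$- and $\iota\tau$-actions, and then to invoke the standard fact that the four correction terms depend only on the local equivalence class of these enhanced complexes. Since the equivalence relation is symmetric (turn $W$ upside down), a one-sided local equivalence in each direction yields equality of the invariants.

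First I would set up the cobordism map $F^{\circ}_{W, \s_{\W}}: CF^{\circ}(Y_1, \s_1) \to CF^{\circ}(Y_2, \s_2)$ induced by the spin$^c$ cobordism $(W, \s_{\W})$. Because $W$ is a rational homology cobordism and $\s_{\W}$ is spin (so $c_1(\s_{\W})=0$), $F_{W, \s_{\W}}$ is a $U$-equivariant, absolutely $\mathbb{Q}$-graded map that is a chain homotopy equivalence on $CF^{\infty}$, i.e.~a local equivalence in the sense of Hendricks--Manolescu--Zemke. By naturality of the involutive structure, it commutes with $\iota$ up to chain homotopy. The new input here is the diffeomorphism $f$ of $W$ with $f(\s_{\W}) = \s_{\W}$ and $f|_{\partial W} = \tau_1 \sqcup \tau_2$: appealing to the chain-level naturality of Heegaard Floer cobordism maps under self-diffeomorphisms of the cobordism in the sense of Juh\'asz--Thurston--Zemke, one obtains chain homotopies
\[
F_{W, \s_{\W}} \circ \tau_1 \simeq \tau_2 \circ F_{W, \s_{\W}}, \qquad F_{W, \s_{\W}} \circ \iota_1 \tau_1 \simeq \iota_2 \tau_2 \circ F_{W, \s_{\W}}.
\]

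Consequently $F_{W, \s_{\W}}$ descends to chain maps of the mapping cones $\CFIo_{\tau}(Y_i, \s_i)$ and $\CFIo_{\iota\tau}(Y_i, \s_i)$, and these descended maps remain chain homotopy equivalences on the underlying $CF^{\infty}$, hence local equivalences of the enhanced complexes. The correction terms $\underline{d}_{\circ}$ and $\bar{d}_{\circ}$ (for $\circ \in \{\tau, \iota\tau\}$) are defined in \cite{dai2020corks} as extremal $\mathbb{Q}$-gradings of non-torsion classes arising from the mapping cone homology, and such grading invariants are monotone under local equivalence. Running the same argument for the reversed bordism $(-W, f, \s_{\W})$ produces a local equivalence in the opposite direction, and combining the two inequalities gives equality of all four correction terms, proving the well-definedness on $\G$.

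The main obstacle, and the step I expect to require the most care, is the chain-level $\tau$-equivariance of $F_{W, \s_{\W}}$. Concretely, one has to realize $f$ at the level of a Heegaard triple (or handle decomposition) subordinate to $W$, compare the chain map built from a diagram $\mathcal{H}$ with the one built from $f_{*}\mathcal{H}$, and then invoke the naturality of transition maps from Juh\'asz--Thurston--Zemke to identify the two up to chain homotopy. The hypothesis $f(\s_{\W}) = \s_{\W}$ is precisely what guarantees that the comparison lands in the correct spin$^c$ summand, so that the induced action on the target is $\tau_2$ and not some other symmetry. Once this equivariance is established, the reduction to local equivalence invariance of the four correction terms is already essentially contained in \cite{dai2020corks}, which is presumably why the authors describe the result as being ``implicit'' in that reference.
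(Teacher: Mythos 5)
Your overall strategy is the same as the paper's: show the cobordism map $F_{W,\s_{\W}}$ intertwines the boundary involutions up to homotopy, observe that it induces a quasi-isomorphism on the $CF^\infty$-level mapping cones, and extract the equality of correction terms by running a monotonicity/grading-shift argument in both directions. But there are two genuine gaps.

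First, your argument assumes throughout that $W$ is a rational homology cobordism, but Definition~\ref{spinclass} only requires $H_2(W;\mathbb{Q}) = 0$ (together with the two conditions on $f$), and allows $Y_1$, $Y_2$ to be disconnected. A bordism in $\G$ can therefore attach $1$-handles and $3$-handles that connect components; such a $W$ is not a rational homology cobordism and its cobordism map is not a priori a local equivalence. The paper's proof handles this by decomposing $W = W_a \cup W_h \cup W_b$ into a $1$-handle cobordism $W_a$, a genuine rational homology cobordism $W_h$, and a $3$-handle cobordism $W_b$; the $1$- and $3$-handle pieces are dealt with via the equivariant connected-sum formula of \cite[Proposition 6.8]{dai2020corks} (which is also why the paper needs a definition of $\underline{d}_\tau$, $\bar{d}_\tau$ for disconnected $Y$), and only $W_h$ is treated by the grading-shift inequality. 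Without this decomposition, the claim that $F_{W,\s_{\W}}$ is a local equivalence is simply false in general.

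Second, even in the connected case, the naturality you cite only gives $\tau_2 \circ F_{W,\Gamma,\s_{\W}} \simeq F_{W, f(\Gamma), \s_{\W}} \circ \tau_1$, where $\Gamma$ is Zemke's embedded graph datum and $f(\Gamma)$ is its image. To close the square to $\tau_2 \circ F_{W,\Gamma,\s_{\W}} \simeq F_{W,\Gamma,\s_{\W}} \circ \tau_1$ one additionally needs $F_{W,f(\Gamma),\s_{\W}} \simeq F_{W,\Gamma,\s_{\W}}$. You flag this as the step requiring care but do not say what makes it work: the paper invokes the triviality of Zemke's graph-action maps $A_\gamma \simeq 0$, using that $A_\gamma$ depends only on $[\gamma] \in H_1(Y)/\mathrm{Tors}$ (so it extends from the integer homology sphere case in \cite{dai2020corks} to rational homology spheres). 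Your appeal to the naturality of Juh\'asz--Thurston--Zemke transition maps is not enough by itself, since those results address well-definedness of the $3$-manifold invariant under Heegaard moves, not independence of the cobordism map on the choice of embedded path or graph.
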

\noindent
It follows from Theorem~\ref{t1} and Corollary~\ref{corollary}, that we can reduce the computation of the invariants $\underline{d}_{\circ}$ and $\bar{d}_{\circ}$ for rational homology spheres that are obtained via the surgery on symmetric knots $(K,\tau)$ to a computation of the action of $\tau_K$ on $CFK^{\infty}(K)$. Indeed, we show that for a vast collection of symmetric knots, we can compute the equivariant involutive correction terms, see Section~\ref{computation}. 

\subsection{Computations} As observed above, much of the usefulness of Theorem~\ref{t1} relies upon being able to compute the action of $\tau_K$ on $CFK^{\infty}(K)$. In general, it is quite hard to compute the action of the symmetry. Indeed to the best of the author's knowledge, prior to the present article, the basepoint moving automorphism (computed by Sarkar \cite{sarkar2015moving} and Zemke \cite{zemke2017quasistabilization}) and the periodic involution on the knot $K \# K$ (computed by Juh{\'a}sz-Zemke \cite[Theorem 8.1]{juhasz2018stabilization}, see Figure~\ref{KconnectKperiod}) were the only two known examples of computation of a mapping class group action for a class of knots. 

In general computations of this sort are involved because, in order to compute the actions from scratch, one usually needs an equivariant Heegaard diagram in order to identify the action of $\tau_K$ on the intersection points of $\alpha$ and $\beta$-curves. Typically, this Heegaard diagram has genus bigger than 1, which makes it harder to calculate the differential by identifying the holomorphic disks, see \cite[Proposition 6.27]{flexible_equivariant}. Another approach is given by starting with a grid diagram \cite{manolescucombinatorial} of the knot, so that identifying the holomorphic disks is easier but in that case number of intersection points increases drastically making it harder to keep track of the action of $\tau_K$ on the intersection points. 

We take a different route for computing the action of $\tau_K$. We show that $\tau_K$ comes with certain filtration and grading restrictions along with identities that it satisfies due to its order, (see Subsection~\ref{action_knot}) which enables us to compute the action for a vast class of equivariant knots.  This is reminiscent of the computation of $\iota_K$ action on the knot Floer complex, \cite{HM}. Although $\tau_K$ actions are more complicated in nature compared to $\iota_K$, as we will see later.
 
In \cite{Lens} Ozsv{\'a}th and Szab{\'o} introduced the L-space knots, i.e. knots which admit an L-space surgery. For example, these knots include the torus knots and the pretzel knots $P(-2,3,2k+1)$. Many of the L-space knots also admit symmetry. Such examples include the torus knots, with their unique strong involution \cite{Schreier_torus} and the $P(-2,3,2k+1)$ pretzel knots. In fact, it was conjectured by Watson by all L-space knots are strongly invertible \cite[Discussion surrounding Conjecture $30$]{watson2017khovanov}, which was later disproved by Baker and Luecke \cite{baker2020asymmetric} by providing a counterexample. Notably the simplest counterexample in \cite{baker2020asymmetric} is a knot of genus $119$. In Section~\ref{computation}, we compute the action of $\tau_K$ on $CFK^{\infty}(K)$ for any strongly invertible L-space knot $(K,\tau_K)$. In fact, it turns out, that the action is exactly the same as the action of $\iota_K$ on such knots. On the other hand, we show that if $(K, \tau_K)$ is a periodic L-space knot then the action of $\tau_K$ is chain homotopic to identity (see Proposition~\ref{lspacestrong} and Proposition~\ref{lspaceperiodic}). In light of Theorem~\ref{t1}, let us now observe the following. If $\tau_K \simeq \iota_K$ for a strongly invertible knot $(K,\tau)$, then the $\iota$-action on $(S^{3}_{p}(K),[0])$ can be identified with the action of the involution $\tau$ on $(S^{3}_{p}(K),[0])$, here $p \geq g(K)$. As noticed before, this involution $\tau$ is in fact a covering involution. First examples of the identification of a covering involution of a double-cover of $S^{3}$ with the $\iota$-action were shown by Alfieri-Kang-Stipsicz \cite[Theorem 5.3]{AKS}. Leveraging the computation  of $\tau_K$ on torus knots we at once obtain the following: 
 \begin{proposition}\label{iotaidentify}
Let $n \geq (p-1)(q-1)/2$ then
\[
CF^{-}(S^{3}_{n}(\overline{T}_{p,q}),\tau,[0]) \simeq CF^{-}(\Sigma_{2}(L_{n,p,q}), \tau_{n,p,q},[0])  \simeq CF^{-}(S^{3}_{n}(\overline{T}_{p,q}),\iota,[0]).
 \] 
\end{proposition}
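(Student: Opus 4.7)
The plan is to establish the two chain homotopy equivalences separately, combining the Montesinos trick (for the first $\simeq$) with Theorem~\ref{t1}, Hendricks--Manolescu's involutive large surgery formula, and Proposition~\ref{lspacestrong} (for the second $\simeq$).

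For the first equivalence, I would invoke the Montesinos trick. Since $\overline{T}_{p,q}$ is strongly invertible with the standard axis through its two obvious fixed points, the quotient $S^{3}_{n}(\overline{T}_{p,q})/\tau$ is $S^{3}$, and the branching set is an explicit link $L_{n,p,q}$ in $S^{3}$ obtained from the axis by performing a suitable rational tangle replacement in a neighborhood of the knot. This identifies $(S^{3}_{n}(\overline{T}_{p,q}), \tau)$ with $(\Sigma_{2}(L_{n,p,q}), \tau_{n,p,q})$ as pairs of a $3$-manifold and an involution. Since this is a diffeomorphism of based pairs preserving the $\spinc$-structure $[0]$, naturality of Heegaard Floer homology (Juh\'asz--Thurston--Zemke) produces a chain homotopy equivalence
\[
CF^{-}(S^{3}_{n}(\overline{T}_{p,q}),\tau,[0]) \simeq CF^{-}(\Sigma_{2}(L_{n,p,q}), \tau_{n,p,q},[0]).
\]

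For the second equivalence, I would combine three inputs. First, the hypothesis $n \geq (p-1)(q-1)/2 = g(\overline{T}_{p,q})$ allows us to apply Theorem~\ref{t1} to $\overline{T}_{p,q}$: the map $\Gamma^{-}_{(w,z)}$ is a chain homotopy equivalence $CF^{-}(S^{3}_{n}(\overline{T}_{p,q}),[0]) \simeq A^{-}_{0}(\overline{T}_{p,q})$ intertwining $\tau$ with $\tau_{K}$. Second, by Hendricks--Manolescu's involutive large surgery formula \cite{HM}, the same large surgery map $\Gamma^{-}$ also intertwines the $\spinc$-conjugation action $\iota$ on the surgery side with $\iota_{K}$ on the quotient side. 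Third, Proposition~\ref{lspacestrong} asserts that for any strongly invertible L-space knot, $\tau_{K} \simeq \iota_{K}$ on $CFK^{\infty}$; this applies to $T_{p,q}$, and mirror symmetry of the knot Floer complex (under which both $\tau_{K}$ and $\iota_{K}$ are natural) transports the chain homotopy to $\overline{T}_{p,q}$. Pushing this chain homotopy through to the quotient complex $A^{-}_{0}(\overline{T}_{p,q})$ and conjugating by $\Gamma^{-}_{(w,z)}$ yields $\tau \simeq \iota$ on $CF^{-}(S^{3}_{n}(\overline{T}_{p,q}),[0])$, which gives the second identification.

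The main obstacle is ensuring that Proposition~\ref{lspacestrong} genuinely applies to $\overline{T}_{p,q}$ rather than only to $T_{p,q}$: the statement is naturally phrased for (positive) L-space knots, but the argument needs the identity $\tau_{K}\simeq\iota_{K}$ for the mirror. The justification is essentially formal, since mirroring a knot mirrors its bigraded Heegaard diagram and both actions are defined via moves that are equivariant under this mirroring, but one should verify carefully that the chain homotopy producing $\tau_{K} \simeq \iota_{K}$ on $T_{p,q}$ pulls back to a chain homotopy on the dual complex computing $CFK^{\infty}(\overline{T}_{p,q})$. Once this is in hand, everything else reduces to assembling the previously established surgery formulas.
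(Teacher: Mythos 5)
Your approach matches the paper's proof exactly: Montesinos trick for the first local equivalence, and Theorem~\ref{t1} plus Hendricks--Manolescu's involutive large surgery formula plus Proposition~\ref{lspacestrong} for the second, noting that the same $\Gamma$ map intertwines both pairs of actions. The caveat you raise about whether Proposition~\ref{lspacestrong} applies to $\overline{T}_{p,q}$ is moot -- that proposition is stated explicitly for both L-space knots and their mirrors, so no separate mirroring argument is required; also bear in mind that `$\simeq$' in the statement denotes local equivalence (Definition~\ref{localequi}) rather than chain homotopy equivalence, which absorbs the basepoint discrepancies via Proposition~\ref{localbase}.
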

\noindent
Where $L_{n,p,q}$ represents the branching locus of $\tau$-action, $\tau_{n,p,q}$ represents the covering involution  and `$\simeq$' denotes local equivalence (see Definition~\ref{localequi}).

\begin{figure}[h!]
\centering
\includegraphics[scale=.50]{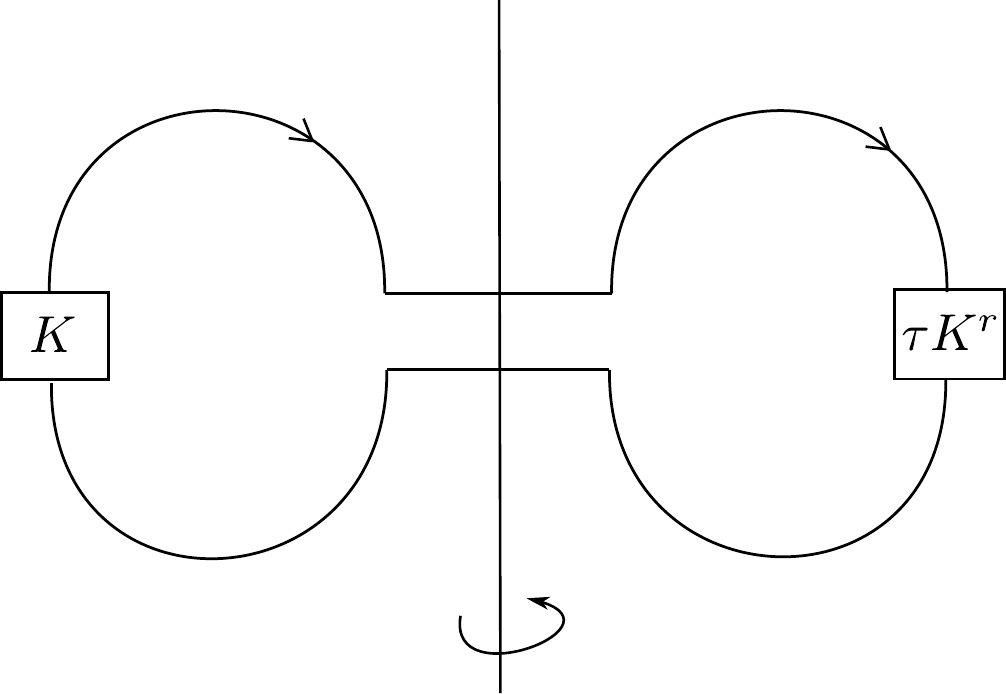} 
\caption
{The symmetry of $K \# \tau K^{r}$ of considered in Theorem~\ref{concordance}.}\label{KconnectK}
\end{figure} 

We also consider the strongly invertible symmetry $\tilde{\tau}_{sw}$ on the knot $K \# \tau K^{r}$. Here $K$ is any knot in $S^{3}$ and $K^{r}$ is the orientation reversed of $K$. $\tau$ is defined to be the symmetry depicted in Figure~\ref{KconnectK} induced by \textit{switching} the factors $K$ and $\tau K^{r}$. We denote the action of $\tau$ on the knot Floer complex of $K \# \tau K^{r}$ as $\tilde{\tau}_{sw}$. We show:

\begin{theorem}\label{KconnectKstrong}
Given a doubly-based knot $(K,w,z)$, and the knot $(\tau{K}^{r},\tau w, \tau z)$ as above. There is a filtered chain homotopy equivalence
\[
F:\mathcal{CFL}^{\infty}(K,w,z) \otimes \mathcal{CFL}^{\infty}(\tau {K}^{r}, \tau w, \tau z) \rightarrow \mathcal{CFL}^{\infty}(K \# \tau {K}^{r}, w^{\prime}, z^{\prime})
\]
which intertwines with the strong involution action $\tilde{\tau}_{sw}$ with the map $(\mathrm{id} \otimes \mathrm{id} + \Psi \otimes \Phi) \circ \tilde{\tau}_{exch}$. 
\end{theorem}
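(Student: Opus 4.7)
The plan is to adapt the technique used by Juh\'asz--Zemke for the periodic involution on $K \# K$, modified to account for the fact that the strong involution $\tilde{\tau}_{sw}$ on $K \# \tau K^r$ has two fixed points on the knot rather than acting freely. The overall strategy is to build an equivariant Heegaard diagram for $K \# \tau K^r$, apply the connected sum formula to produce $F$, and then analyze how $\tilde{\tau}_{sw}$ acts on the tensor product through that equivalence.

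First I would construct an equivariant Heegaard diagram. Starting with a doubly-pointed Heegaard diagram $\mathcal{H} = (\Sigma, \alphas, \betas, w, z)$ for $(K, w, z)$, applying $\tau$ yields $\tau \mathcal{H}$, a Heegaard diagram for $(\tau K^r, \tau w, \tau z)$. Gluing $\mathcal{H}$ and $\tau \mathcal{H}$ along a standard connect-sum tube, with basepoints $w'$ and $z'$ placed symmetrically on the connect-sum sphere, produces a Heegaard diagram $\mathcal{H}'$ for $K \# \tau K^r$ that is invariant under $\tilde{\tau}_{sw}$. The involution swaps the two halves and reflects across the two fixed points on the neck, which separate $w'$ and $z'$ on the knot.

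Next I would apply the Ozsv\'ath--Szab\'o connected sum theorem to obtain the filtered chain homotopy equivalence
\[
F: \mathcal{CFL}^\infty(K, w, z) \otimes \mathcal{CFL}^\infty(\tau K^r, \tau w, \tau z) \to \mathcal{CFL}^\infty(K \# \tau K^r, w', z'),
\]
defined by counting pseudo-holomorphic triangles. By naturality of $F$ with respect to the equivariant diagram, $\tilde{\tau}_{sw}$ decomposes under $F$ into two contributions: a factor-swap part that is precisely $\tilde{\tau}_{exch}$, and a local correction near the connect-sum sphere where $\tilde{\tau}_{sw}$ reflects rather than fixes the basepoints. Applying the Sarkar--Zemke description of basepoint-moving maps, the reflection across the two fixed points gives a correction of the form $\mathrm{id} + \Psi \Phi$ on $\mathcal{CFL}^\infty(K \# \tau K^r)$. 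Splitting the disks responsible for this correction across the connect-sum neck via a neck-stretching argument, the $\Psi$ and $\Phi$ factors land on opposite sides of the tensor product, yielding $\mathrm{id} \otimes \mathrm{id} + \Psi \otimes \Phi$ as the correction; composing with $\tilde{\tau}_{exch}$ then gives the claimed formula.

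The main obstacle is controlling this neck-stretching analysis to pin down the precise form $\Psi \otimes \Phi$, as opposed to $\Phi \otimes \Psi$ or some symmetrized combination. This requires carefully tracking how the strong involution reverses orientation on the knot near each of its two fixed points, so that one fixed point contributes the $w$-derivative ($\Psi$) on the $K$ side while the other contributes the $z$-derivative ($\Phi$) on the $\tau K^r$ side. Once the chain-level identity is established, the filtered statement follows automatically since $F$, $\tilde{\tau}_{exch}$, $\Psi$, and $\Phi$ all respect the Alexander filtration.
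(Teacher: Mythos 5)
Your outline captures the right shape of the answer -- equivariant setup for the connected sum plus a correction of the form $\mathrm{id}\otimes\mathrm{id} + \Psi\otimes\Phi$ -- but the crucial step is not actually established. You propose deriving the correction term by combining the Sarkar--Zemke basepoint-moving formulas with a ``neck-stretching'' argument that splits $\Psi\Phi$ across the connect-sum tube, and you acknowledge yourself that pinning down the precise form $\Psi\otimes\Phi$ (versus $\Phi\otimes\Psi$ or a symmetrization) is ``the main obstacle.'' That is exactly the point where the argument has to do real work, and neck-stretching is not the mechanism that produces this term. In the paper the correction comes from Zemke's \emph{bypass relation} applied to a specific bypass disk on the decorated pair-of-pants cobordism: one compares the link cobordism map $G_{W,\mathcal{F}}$ for two different dividing-set decorations and obtains the identity
\[
G_{W,\overline{\mathcal{F}}} \circ (\rho_1 \otimes \rho_2) + \rho_{\#} \circ G_{W,\mathcal{F}} \simeq G_{W,\overline{\mathcal{F}}} \circ (\Psi\rho_1 \otimes \Phi\rho_2),
\]
which is an algebraic consequence of the decorated link cobordism TQFT, not an analytic degeneration statement. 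The $\Psi\otimes\Phi$ asymmetry is dictated by which region of the pair-of-pants is marked as a $w$-region versus a $z$-region under the chosen decoration; it has nothing to do with ``one fixed point contributing the $w$-derivative and the other the $z$-derivative,'' which misattributes the source of the correction. Without the bypass relation (or an equivalent computation), the asserted form of the correction is unsupported.

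There is a second gap: you place $w'$ and $z'$ so that they are switched by the involution on the connect-sum neck, but with the pair-of-pants decoration the natural extension $f$ of the involution to the cobordism \emph{fixes} the basepoints rather than switching them, as the paper notes explicitly. This forces one to first prove the intertwining statement for the fixed-basepoint version $\tilde{\tau}'_{sw}$ of the action (Definition~\ref{taufix}), and then transfer back to $\tilde{\tau}_{sw}$ via a basepoint-moving equivalence (Proposition~\ref{taufixproof}), taking $F = \tilde{\rho}\circ G_{W,\mathcal{F}}$. Your proposal skips this bookkeeping entirely, and since the two versions of the action differ by a nontrivial half-Dehn-twist, one cannot simply assume they coincide.
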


\noindent
Here $\mathcal{CFL}$ is the full link Floer complex considered in  \cite{Zemkelinkcobord} and $\tilde{\tau}_{exch}$ is an automorphism of $\mathcal{CFL}^{\infty}(K,w,z) \otimes \mathcal{CFL}^{\infty}(\tau {K}^{r}, \tau w, \tau z)$ induced from exchanging the two factors of the connect sum. We refer readers to Subsection~\ref{KK} for precise definitions.
A slightly different version of $\tilde{\tau}_{sw}$, $\tau_{sw}$ (owing to the relative placement of basepoints on the knots $K$ and $K^{r}$) was computed in \cite{abhishek2022equivariant} using a different approach. See Remark~\ref{compare}, for a comparison between $\tau_{sw}$ and $\tilde{\tau}_{sw}$.

In another direction, we consider the \textit{Floer homologically thin} knots. These knots are well-studied in the literature, see for example \cite{rasmussenthesis}, \cite{manolescuquasi}. By definition, thin knots are specifically those knots for which the knot Floer homology is supported in a single diagonal. These knots include the alternating knots and more the generally quasi-alternating knots \cite{manolescuquasi}. Typically for these knots, the Heegaard Floer theoretic invariants tend to be uniform in nature, for example, both knot concordance invariant $\tau$  and $\delta$ are determined by the knot signature \cite{FourBall, manolescuquasi}. Further evidence of this uniformity is demonstrated in \cite{HM}, where it was shown that that for a thin knot $K$, there is a unique action (up to change of basis) on $CFK^{\infty}(K)$ which is grading preserving and skew-filtered, which lead to the computation of $\iota_K$ action on those class of knots. 
\begin{figure}[h!]
\centering
\includegraphics[scale=.8]{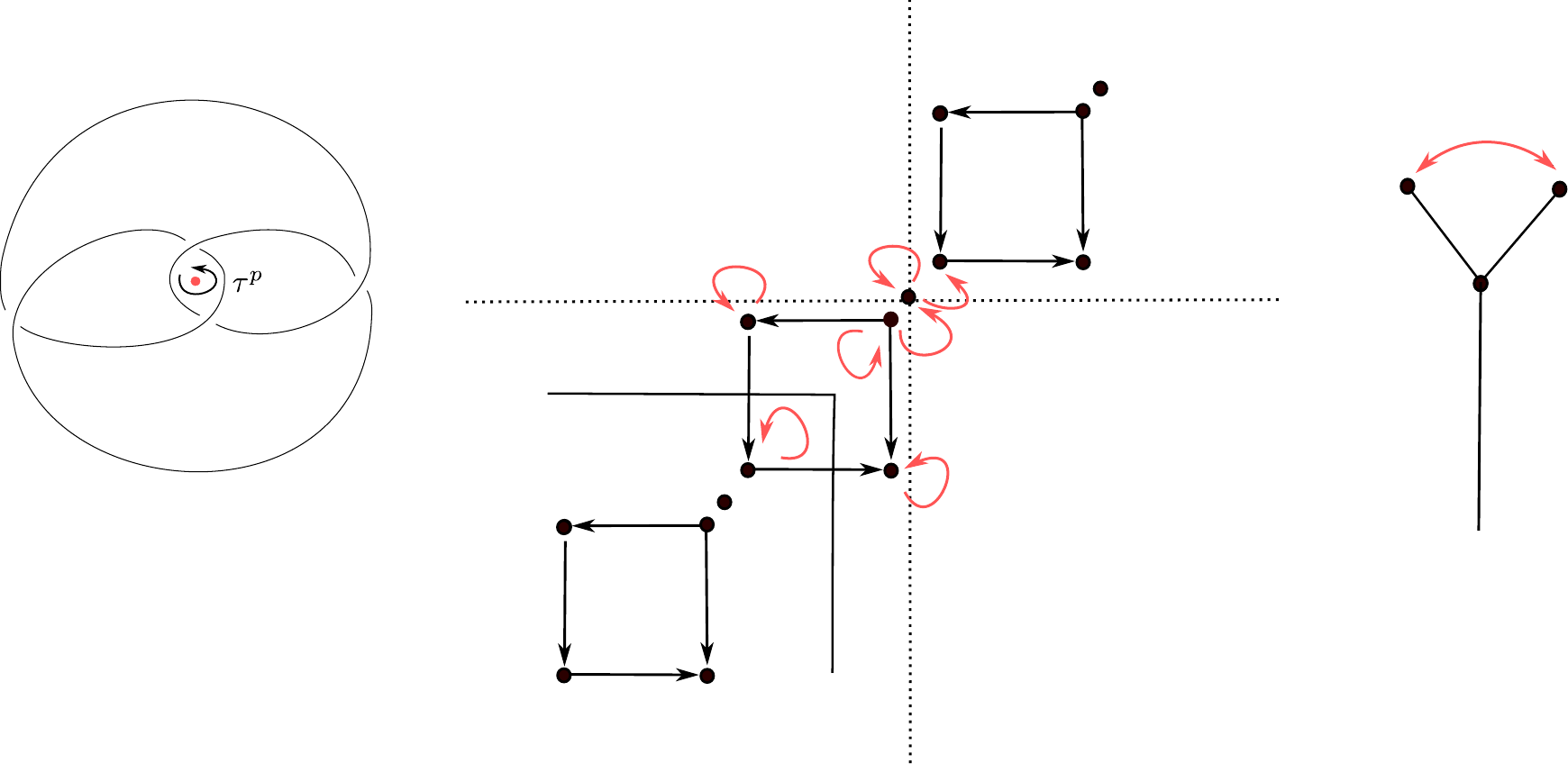} 
\caption
{{Left: Figure-eight knot $4_1$ with a periodic involution $\tau^{p}$. Middle: $CFK^{\infty}(4_1)$ with $\tau^{p}_K$ action, note that there are no boxes outside the main diagonal. Right: $HF^{-}(S_{+1}(4_1))$ with the $\tau^{p}$-action.}}\label{figure_eight_periodic}
\end{figure}
We show that the analogous situation for the action of $\tau_K$ for equivariant thin knots is quite different. For example for the figure-eight knot, one can have two different actions of strongly invertible symmetry, see discussion in Example~\ref{strong_figure_eight}. The parallel situation in the periodic case is slightly different from both the strongly invertible action and the $\iota_K$ action. Firstly, we show for a certain class of thin periodic knots the action is indeed unique. We refer to a thin knot as \textit{diagonally supported} if the \textit{model} Knot Floer chain complex of the knot (that arises from the work of Petkova \cite{Petkova}), does not have any boxes outside the main diagonal.  Where by a `box', we mean a part of the chain complex with $4$-generators, for which the differential forms a square shape. See Figure~\ref{figure_eight_periodic} for an example and Figure~\ref{square} for the definition. We show: 
\begin{theorem}\label{thin_periodic}
Let $(K,\tau)$ be a diagonally supported Floer homologically thin knot with a periodic symmetry $\tau$. Then we can explicitly identify this action on $CFK^{\infty}(K)$. In fact, the action of any other periodic symmetry $\tau^{\prime}$ of $K$ on $CFK^{\infty}(K)$ is equivalent to that of $\tau$. 
 \end{theorem}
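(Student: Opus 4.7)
The plan is to leverage Petkova's structure theorem for thin knots to reduce $\tau_K$ to a map on a small, well-understood model complex, and then to pin it down using the grading and filtration constraints appropriate to periodic symmetries.

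First, I would recall Petkova's decomposition: for a thin knot $K$, the complex $CFK^{\infty}(K)$ is filtered chain homotopy equivalent to a direct sum of a single staircase piece, whose length and shape are controlled by the signature and the Ozsv\'ath--Szab\'o tau invariant, together with a collection of ``box'' subcomplexes, each with four generators forming a square differential. The diagonally supported hypothesis ensures that every box, as well as the staircase, lies along the same main diagonal. I would then invoke from Subsection~\ref{action_knot} the constraints on periodic actions: for $(K,\tau)$ periodic the induced map $\tau_K$ is a Maslov-grading preserving, bi-filtered chain map (preserving both the $i$- and $j$-filtrations individually, since a periodic symmetry preserves the oriented knot rather than swapping basepoints), and it satisfies $\tau_K \circ \tau_K \simeq \mathrm{id}$.

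Next, I would analyze the action piece by piece on the model complex. On each generator of the staircase, the pair $(i,j)$ of filtration levels together with the Maslov grading isolates a unique generator, so any bi-filtered, grading-preserving chain map must act on each such generator by a scalar; the relation $\tau_K^2 \simeq \mathrm{id}$ combined with the nontrivial vertical and horizontal arrows of the staircase then forces $\tau_K$ to act as the identity on the staircase up to chain homotopy. On each box, the four generators occupy four distinct bi-filtration levels $(i,j)$, $(i+1,j)$, $(i,j+1)$, $(i+1,j+1)$; again only scalar actions are permitted on each corner, and compatibility with the square differential plus the involutivity constraint narrows $\tau_K$ to a short list of candidates. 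Comparing against worked examples such as the figure-eight periodic action depicted in Figure~\ref{figure_eight_periodic}, I would then fix the correct choice and write down $\tau_K$ explicitly.

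For the uniqueness statement, I would argue that any second periodic symmetry $\tau'$ of $K$ gives rise to a chain map $\tau'_K$ satisfying exactly the same grading, bi-filtration, and order-two constraints. The diagonally supported hypothesis rules out any nontrivial bi-filtered, grading-preserving chain homotopies that mix generators across distinct diagonal pieces, so the restriction of $\tau_K$ to each box and to the staircase is algebraically rigid; it follows that $\tau_K$ and $\tau'_K$ agree up to grading-preserving, bi-filtered chain homotopy.

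The principal obstacle I anticipate is ruling out exotic homotopy corrections that could a priori distinguish $\tau_K$ from $\tau'_K$. If off-diagonal boxes were present, homotopies between generators at different values of $j-i$ could absorb differences between the two actions and make any equivalence considerably less transparent; the diagonally supported hypothesis is precisely what eliminates this slack, making the piece-by-piece analysis go through and yielding both the explicit form of $\tau_K$ and its uniqueness among periodic actions on $K$.
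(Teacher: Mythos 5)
There is a genuine gap in your proposal, and it sits in the central constraint you impose on the action. You assert that a periodic symmetry satisfies $\tau_K \circ \tau_K \simeq \mathrm{id}$ and use this "involutivity" constraint both to pin down the action on the staircase and to restrict the possible actions on each box. But this is not the correct relation: by Proposition~\ref{prop:tauK}, a periodic action squares to the Sarkar map, $\tau_K^2 \simeq \varsigma$, and the Sarkar map is \emph{not} the identity on box complexes. On a box $\{a,b,c,Ue\}$ one has $\varsigma(a) = a + e$ and $\varsigma$ is the identity on the other three generators. This nontriviality is not a detail one can elide — it is the engine of the uniqueness argument. Under your hypothesis $\tau_K^2 \simeq \mathrm{id}$, the identity map on $CFK^\infty(K)$ would itself be a valid candidate for the action on every piece, and there would be nothing forcing the image of a box's initial corner to involve the staircase generator; with the correct constraint, the paper shows that for the single surviving box $\tau_0(a)$ must contain $x_0$ (the staircase generator in the same grading), and when one tries to have $p \geq 2$ boxes left one derives $\tau_0(x_0) = x_0 + e_j$ for every $j$, a contradiction. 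Your version of the constraint cannot produce this.

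A secondary but also substantive problem is the claim that each generator of a box is isolated by its bi-filtration level $(i,j)$ together with Maslov grading, so that "only scalar actions are permitted on each corner." This is false precisely in the situation the lemma is designed to handle: after $U$-normalization, the initial corners $a_1, \ldots, a_p$ of \emph{all} diagonally supported boxes, the generators $e_1, \ldots, e_p$, and the staircase generator $x_0$ all sit in the same $(i,j)$-grading $(0,0)$. So $\tau_0(a_j)$ is a priori an arbitrary $\mathbb{Z}_2$-linear combination of $\{a_1,\ldots,a_p, e_1,\ldots,e_p, x_0\}$. The paper's proof of Lemma~\ref{thin} is devoted exactly to sorting this out: it identifies initial corners via the characterization $\partial_{\mathrm{vert}}\partial_{\mathrm{horz}}(a_s) \neq 0$, pairs up and splits off boxes whose images cross-reference other boxes through a careful sequence of basis changes, and then uses the Sarkar-map constraint to show at most one box can remain and to force its interaction with the staircase. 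Your "piece-by-piece" rigidity claim skips this, and the appeal to the figure-eight example does not substitute for the case analysis the paper carries out. Finally, the diagonally supported hypothesis is used in the paper not to "rule out homotopies mixing diagonal pieces" but to guarantee that all box initial corners land on the same diagonal as the staircase, which is what makes the pairing/splitting and the final $p \geq 2$ contradiction work; as Remark~\ref{thinremark} notes, without this hypothesis it is genuinely unknown whether the action is unique, which is a strong hint that a purely bi-filtration-isolation argument cannot be correct.
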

\noindent
Where by equivalent, we mean that they are conjugate via an $U$-equivariant change of basis.
 Figure~\ref{figure_eight_periodic} demonstrates a periodic symmetry $\tau^{p}$ on the figure-eight knot, which is a diagonally supported thin knot. The computation of the periodic action $\tau^{p}_K$ on $CFK^{\infty}$ is shown in the middle. We also apply Theorem~\ref{t1} to compute the action of the induced symmetry $\tau^{p}$ on $HF^{-}(S_{+1}(4_1))=\Sigma(2,3,7)$.

We remark that diagonally supported thin knots with a periodic symmetry (i.e. satisfying the hypothesis of Theorem~\ref{thin_periodic}) exist in abundance. To see one such class of knots, recall the following from \cite{OSknots} 
\[
g(K)=max \{|s|: \widehat{HFK}(K,s) \neq 0 \}.
\]  
It can be checked that the above implies that any thin knot $K$ with genus $g(K) \leq 1$ must be diagonally supported. There are many periodic thin knots of genus 1, for example, the $3$-strand pretzel knots with an odd number of half-twists in each strand have such a property. Figure~\ref{pretzel} illustrates the periodic symmetry. Theorem~\ref{thin_periodic} then implies that we can compute the action of $\tau_K$ on such knots. However, it is not known to the author whether the action of a periodic symmetry on any thin knot is always unique (see Remark~\ref{thinremark}).

\begin{figure}[h!]
\centering
\includegraphics[scale=.47]{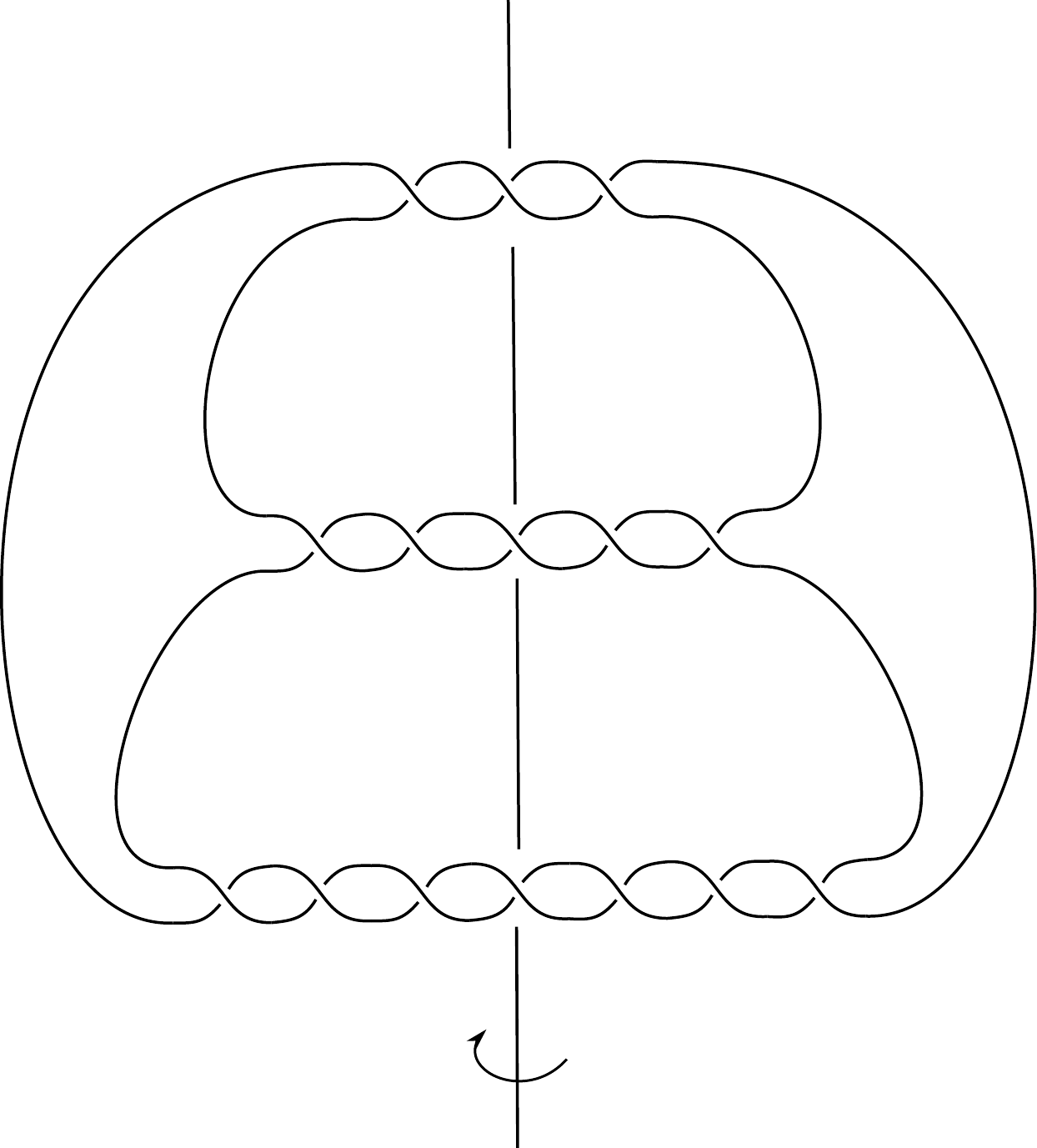} 
\caption
{A diagonally supported thin knot, pretzel $P(3,5,7)$, with a periodic symmetry.}\label{pretzel}
\end{figure}

\subsection{Concordance}
Using symmetry, we also define two knot concordance invariants. These are defined by considering strong symmetry on the knot $K \# \tau K^{r}$ as shown in Figure~\ref{KconnectK}. Using the $\underline{d}_{\circ}$ and $\bar{d}_{\circ}$ type invariants, we show the following 
\begin{theorem}\label{concordance}
 Let $K$ be a knot in $S^{3}$, $\tilde{\tau}_{sw}$ be the strong involution on the knot $K \# \tau K^{r}$ shown in Figure~\ref{KconnectK}. Then the equivariant correction terms $\underline{d}_{\tau}(A^{-}_{0}(K \# \tau K^{r}), \tilde{\tau}_{sw})$, $\bar{d}_{\tau}(A^{-}_{0}(K \# \tau K^{r}), \tilde{\tau}_{sw})$, $\underline{d}_{\iota \tau}(A^{-}_{0}(K \# \tau K^{r}), \tilde{\tau}_{sw})$, $\bar{d}_{\iota \tau}(A^{-}_{0}(K \# \tau K^{r}), \tilde{\tau}_{sw})$ are concordance invariants. 
\end{theorem}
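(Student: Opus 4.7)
The plan is to reinterpret the four correction terms on $A^{-}_{0}$ as correction terms of an equivariant $3$-manifold via Theorem~\ref{t1}, and then show that a smooth concordance between $K_{1}$ and $K_{2}$ induces a pseudo-$\mathbb{Q}$-homology bordism (in the sense of Theorem~\ref{dinvariant}) between the corresponding equivariant surgered manifolds. The concordance invariance will then be immediate.

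\emph{Step 1 (Translation to a $3$-manifold).} Fix an odd integer $p$ with $p \geq 2g(K \# \tau K^{r})$. By Theorem~\ref{t1} applied to the equivariant knot $(K \# \tau K^{r}, \tilde{\tau}_{sw})$, the pair $(A^{-}_{0}(K \# \tau K^{r}), \tilde{\tau}_{sw})$ is chain homotopy equivalent, as a complex with involution, to $(CF^{-}(S^{3}_{p}(K \# \tau K^{r}), [0]), \tilde{\tau}_{sw})$. Because $p$ is odd, the spin$^c$ structure $[0]$ admits a unique underlying spin structure $\s_{0}$, which must be fixed by $\tilde{\tau}_{sw}$. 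The four correction terms of the algebraic complex therefore coincide with the corresponding equivariant correction terms of the triple $(S^{3}_{p}(K \# \tau K^{r}), \tilde{\tau}_{sw}, \s_{0})$. The analogous statement for the $\iota\tau$-correction terms uses Corollary~\ref{corollary}.

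\emph{Step 2 (Equivariant concordance).} Given a smooth concordance $C \subset S^{3} \times I$ from $K_{1}$ to $K_{2}$, apply the reflect-reverse construction used to form $\tau K^{r}$ parametrically in the $I$-direction to obtain a concordance $\tau C^{r}$ from $\tau K_{1}^{r}$ to $\tau K_{2}^{r}$. The boundary connected sum $C \natural \tau C^{r}$ is a concordance from $K_{1} \# \tau K_{1}^{r}$ to $K_{2} \# \tau K_{2}^{r}$, and by construction it is invariant under the swap involution $\tilde{\tau}_{sw}$ extended slice-by-slice to $S^{3} \times I$.

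\emph{Step 3 (Surgery cobordism).} Performing $p$-framed surgery along the equivariant concordance $C \natural \tau C^{r}$ produces a $4$-manifold $W$ between $S^{3}_{p}(K_{1} \# \tau K_{1}^{r})$ and $S^{3}_{p}(K_{2} \# \tau K_{2}^{r})$. A direct Mayer--Vietoris computation shows $W$ is a rational homology cobordism. The swap symmetry of the concordance extends smoothly to an involution $f$ on $W$ restricting to $\tilde{\tau}_{sw}$ on each end. Since $W$ is a $\mathbb{Q}$-homology cobordism whose boundary components both have odd order first homology, the spin structures $\s_{0}$ on the two ends extend to a unique spin structure $\s_{W}$ on $W$, and by uniqueness $f(\s_{W}) = \s_{W}$. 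The triple $(W, f, \s_{W})$ thus realizes a pseudo-$\mathbb{Q}$-homology bordism in the sense used to define $\G$. Applying Theorem~\ref{dinvariant} gives equality of the four equivariant correction terms on the two boundary components of $W$, and combining with Step 1 completes the proof.

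\emph{Main obstacle.} The delicate part is Step 3: verifying that $\tilde{\tau}_{sw}$ actually extends over the $p$-surgery, that the resulting cobordism is genuinely a $\mathbb{Q}$-homology cobordism, and that the spin structure $\s_{W}$ behaves as claimed. The first point requires tracking the framing data along $C$ and $\tau C^{r}$ so that the two surgery tori are exchanged by the extended involution; the second is a standard but non-trivial Mayer--Vietoris argument using $H_{*}(S^{3} \times I, \nu(C \natural \tau C^{r}))$; the third uses that for a $\mathbb{Q}$-homology cobordism with odd-order boundary first homology, $H^{2}(W; \mathbb{Z})$ has no $2$-torsion in the relevant places, so spin structures on $W$ are pinned down by their boundary behavior. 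Once these compatibility issues are confirmed, the theorem reduces formally to Theorem~\ref{dinvariant}.
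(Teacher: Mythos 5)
Your overall strategy — translate the algebraic correction terms of $A^{-}_{0}$ into equivariant $d$-type invariants of $(S^{3}_{p}(K_i \# \tau K_i^{r}), \tilde{\tau}_{sw}, \s_0)$ via the surgery formula, then show a concordance $K_1 \sim K_2$ produces a pseudo spin $\mathbb{Q}$-homology bordism between the two surgered manifolds, and finish with Theorem~\ref{dinvariant} — matches the skeleton of the paper's argument. Where you diverge is in how the bordism is produced. The paper takes a more topological shortcut: it invokes the Montesinos trick to identify $(S^{3}_{p}(K_i \# \tau K_i^{r}), \tilde{\tau}_{sw})$ with the double branched cover $\Sigma_2(C(K_i))$ of a cable $C(K_i)$ of $K_i$, observes that concordant knots have concordant cables, and then cites the standard fact that double branched covers of concordant knots are $\mathbb{Z}_2$-homology cobordant with the covering involutions extending (the cobordism being the double branched cover of the concordance). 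All the required hypotheses in Definition~\ref{spinclass} — vanishing of $H_2(W;\mathbb{Q})$, triviality of the action on $H_1(W,\partial W;\mathbb{Q})$, spin compatibility for odd $p$ — come along for free from this picture. Your approach instead builds the equivariant cobordism directly: take the swap-equivariant concordance $C \natural \tau C^{r}$, do $p$-surgery on it, and verify the homological and spin conditions by hand (Mayer--Vietoris and the observation that $H^1(W;\mathbb{Z}/2) = H^2(W;\mathbb{Z}/2) = 0$ for odd $p$, so the spin structure is unique and automatically fixed by $f$). Both routes are sound; yours is more hands-on and avoids the branched-cover dictionary, at the cost of having to do the verification that the paper outsources to well-known double-branched-cover lemmas. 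Two small caveats in your write-up: (i) you need $p$ large relative to \emph{both} knots, so take $p \geq \max\{2g(K_1), 2g(K_2)\}$ rather than fixing it for one $K$; (ii) in the "main obstacle" commentary, the phrase about the "two surgery tori being exchanged" is imprecise — after band-summing, $C \natural \tau C^{r}$ is a single annulus with a single tubular neighborhood, and the strong involution preserves (rather than exchanges) the glued-in $S^{1}\times D^{2}\times I$, reversing its core; the correct requirement is just that the equivariant Seifert framing is used, which it is. Neither caveat affects the validity of the argument.
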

\noindent
As mentioned before, the $\iota$-action in conjunction with the $\tau$-action has been useful in studying various questions in an equivariant setting. Indeed, this strategy is used extensively in \cite{dai2020corks} and \cite{abhishek2022equivariant}. However, here with $\underline{d}_{\iota \tau}$ and $\bar{d}_{\iota \tau}$, we use the $\iota$ and $\tau$-action together to study objects in the non-equivariant setting, namely knot concordance. The invariants from Theorem~\ref{concordance} can also be interpreted as the equivariant correction terms of double branched covers of certain cables of $K$, see discussion in Remark~\ref{knotconcordanceremark}.

Theorem~\ref{KconnectKstrong} coupled with Theorem~\ref{t1},  implies that we can compute the invariants above for any knot $K$. Leveraging this, we show that some of the invariants defined in Theorem~\ref{concordance} differ from their involutive Heegaard Floer counterpart, see Subsection~\ref{KconnectKcompute}. Although explicit examples illuminating the usefulness of these invariants  (where their involutive Heegaard Floer counterpart fail) is currently not known to the author (see Remark~\ref{questiontausw}).

\subsection{An invariant of equivariant knots}

The study of Equivariant knots up to conjugacy has received much attention in the literature. For example, they were studied using twisted-Alexander polynomial \cite{hillman2006twisted}, double branched covers \cite{naik1997new}, HOMFLYPT polynomials \cite{traczyk1991periodic}.
More recently, modern techniques were also used to study such knots. To study them, Hendricks used spectral sequence in link Floer homology \cite{hendricks2012localization}, Jabuka-Naik used $d$-invariant from Heegaard Floer homology \cite{jabukaperiodic}, Borodzik-Politarczyk used equivariant Khovanov homology \cite{borodzik2017khovanov}, Stoffregen-Zhang used annular Khovanov homology \cite{stoffregen2018localization} and Boyle-Issa used Donaldson's theorem \cite{boyle2021equivariant}. Using inputs from Khovanov homology, Watson \cite{watson2017khovanov} defined an invariant of strongly invertible knots which takes the form of a graded vector space (see also Lobb-Watson \cite{lobb2021refinement}).

In this article, we define an invariant of equivariant knots using the formalism of involutive Heegaard Floer homology.  The invariant manifests as a quasi-isomorphism class of $\mathbb{Z} \oplus \mathbb{Z}$-filtered complexes over $\mathbb{Z}_{2}[Q, U, U^{-1}]/(Q^{2})$. This invariant, although defined based on involutive techniques, defers from its involutive counterpart as we describe below.

In \cite{HM}, Hendricks and Manolescu defined an invariant $CI(Y,K,w,z)$ by taking into account the $\iota_K$ action on the knot. However, the invariant $CI(Y,K,w,z)$ lacks the $\mathbb{Z} \oplus \mathbb{Z}$ that is present in the usual knot Floer complex $CFK^{\infty}$, hence it was not as useful as its $3$-manifold counterpart. By considering the actions of $\tau_K$ for an equivariant knot, we define a similar invariant, which does admit a $\mathbb{Z} \oplus \mathbb{Z}$-filtration. 
\begin{proposition}\label{knotinvariant}
Let $(Y,K, \tau, w,z)$ be an equivariant doubly-based knot $(K,w,z)$ in an $\mathbb{Z}HS^{3}$, $Y$. If $\tau$ is a periodic action then the quasi-isomorphism class of $CI_{\tau}(Y,K,w,z)$, a $\mathbb{Z} \oplus \mathbb{Z}$-filtered chain complex over $\mathbb{Z}_{2}[Q, U, U^{-1}]/(Q^{2})$, is an invariant of the conjugacy class of $(Y,K, \tau, w,z)$. Likewise, if $\tau$ is a strongly invertible involution then $CI_{\iota \tau}(Y,K,w,z)$ is an invariant of the conjugacy class of $(Y,K, \tau, w,z)$.
\end{proposition}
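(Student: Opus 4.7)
The plan is to emulate the invariance argument for $CI(Y,K,w,z)$ from \cite{HM}, with $\tau_K$ replacing $\iota_K$ in the periodic case and $\iota_K\tau_K$ replacing $\iota_K$ in the strongly invertible case. Concretely, I would define
\[
CI_\tau(Y,K,w,z) := \mathrm{Cone}\!\Bigl(CFK^\infty(K,w,z) \xrightarrow{\,Q(\mathrm{id}+\tau_K)\,} Q\cdot CFK^\infty(K,w,z)[-1]\Bigr),
\]
and analogously $CI_{\iota\tau}(Y,K,w,z)$ with $\iota_K\tau_K$ in the differential. The dichotomy between the two cases is dictated by the filtration behavior established in Section~\ref{action_knot}: a periodic $\tau$ preserves the orientation of $K$ and can be arranged to fix the basepoints, so $\tau_K$ is bi-filtered of bidegree $(0,0)$; a strongly invertible $\tau$ instead reverses the orientation of $K$ and swaps $w$ with $z$, flipping both filtrations exactly as $\iota_K$ does, so the composition $\iota_K\tau_K$ is again bi-filtered and its mapping cone inherits a well-defined $\mathbb{Z}\oplus\mathbb{Z}$-filtration.

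For the invariance itself, I would combine the naturality framework of \cite{JTZ} with the control of $\tau_K$ developed in Section~\ref{action_knot}: $CFK^\infty(K,w,z)$ is a filtered chain homotopy invariant, and $\tau_K$ (hence $\iota_K\tau_K$) is determined up to filtered chain homotopy by the conjugacy class of the based equivariant knot. Given two choices of equivariant Heegaard data one obtains a filtered chain homotopy equivalence $F$ between the two models of $CFK^\infty$ together with a filtered homotopy $H$ satisfying $F\tau_K + \tau_K' F = \partial H + H\partial$ (working over $\mathbb{Z}_2$). The pair $(F,H)$ assembles into a morphism of mapping cones, and since $Q^2=0$ the combinatorics remain finite; one verifies it is a filtered quasi-isomorphism by filtering by $Q$-degree and applying the five-lemma on the associated graded. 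Applying the same recipe to an equivariant diffeomorphism $\phi\colon (Y,K,\tau,w,z)\to (Y',K',\tau',w',z')$ between conjugate equivariant knots yields a filtered quasi-isomorphism between the corresponding $CI_\tau$ complexes, which gives conjugacy invariance.

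The main obstacle will be ensuring that the chain homotopies $H$ arising in this process are genuinely filtered. A priori, the homotopies produced by JTZ naturality or by change-of-Heegaard-diagram arguments need not respect the Alexander filtration; this is the same subtlety Hendricks and Manolescu address for $\iota_K$ in \cite{HM}. I would resolve it by appealing to the filtered naturality of Zemke's triangle and quasi-stabilization maps, together with the basepoint-moving analysis from Section~\ref{action_knot}, to exhibit a filtered representative of $H$ in both the periodic and the strongly invertible settings. Once this filtered refinement is in place, the five-lemma argument upgrades the mapping-cone morphism to a filtered quasi-isomorphism between any two models of $CI_\tau$ (respectively $CI_{\iota\tau}$), and the proposition follows.
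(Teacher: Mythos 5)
Your proposal is correct and follows essentially the same route as the paper: Propositions~\ref{periodic_invariant} and \ref{strong_invariant} deduce invariance directly from Zemke's diffeomorphism invariance of link cobordism maps (\cite[Theorem A]{Zemkelinkcobord}), which gives the commuting-up-to-homotopy square intertwining $\phi$ with $\tau_{K_i}$ and hence a filtered quasi-isomorphism of mapping cones; your $(F,H)$ assembly plus the five-lemma on the $Q$-filtered associated graded is just the implicit homological algebra behind the paper's one-line conclusion. Two small corrections to your framing. First, in the periodic case $\tau$ by definition has no fixed points on $K$, so the basepoints cannot be arranged to be fixed; instead $\tau_K$ is defined by post-composing the pushforward with a finger-moving map $\rho$ along the knot, and it is this composite that is filtered. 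Second, the subtlety Hendricks--Manolescu face for $CI(Y,K,w,z)$ is not that their homotopies fail to be filtered but that $\iota_K$ itself is skew-filtered, so their mapping cone carries no $\mathbb{Z}\oplus\mathbb{Z}$-filtration at all; here $\tau_K$ (respectively $\iota_K\tau_K$) is genuinely filtered, and the filtered-naturality of Zemke's cobordism maps supplies filtered homotopies without extra work.
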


We refer readers to Section~\ref{knotCI} for the definitions of these invariants. The proof of Proposition~\ref{knotinvariant} is rather straightforward and follows from the naturality results. We also make use of the computations of the $\tau_K$ actions on the knot Floer complex from Section~\ref{computation} together with the equivariant (and $\iota$-equivariant) surgery formula to obtain a few simple calculations of these invariants. For example, these invariants are trivial for equivariant L-space knots (and their mirrors) but are potentially non-trivial for equivariant thin knots. For example, $CF_{\tau}$ is non-trivial for figure-eight knots. In the interest of studying the equivariant knots up to its conjugacy class, one may hope to exploit the $\mathbb{Z} \oplus \mathbb{Z}$-filtrations present $CI_{\circ}$ further.

\subsection{Organization}
This document is organized as follows. In Section~\ref{involutionaction} we define the action of an involution on the Heegaard Floer chain complex of a $3$-manifold and on the Knot Floer complex of a knot. In Section~\ref{spingroup} we define the group $\G$. In Section~\ref{correction}, we prove invariance of $\underline{d}_{\circ}$ and $\bar{d}_{\circ}$. We then follow it up by proving the equivariant (and $\iota$-equivariant) large surgery formula in Section~\ref{proofsurgery}. We go on with the computations for strong involution on $K \# \tau K^{r}$ and the periodic involution on thin knots in Section~\ref{computesection}. In Section~\ref{concordanceinvariant}, we prove the concordance invariance. Section~\ref{computation} is devoted to the explicit computation of the action and the equivariant correction terms for many examples and finally, in Section~\ref{knotCI}, we observe that $CI_{\circ}$ are equivariant knot invariants.

\subsection{Acknowledgment}
The author wishes to thank Irving Dai, Kristen Hendricks, and Matthew Stoffregen for helpful conversations, especially regarding Section~\ref{computesection}. The author is extremely grateful to Matthew Hedden, whose input has improved this article in many ways. This project began when the author was a graduate student at Michigan State University and finished when the author was a postdoctoral fellow in the Max Planck Institute for Mathematics. The author wishes to thank both institutions for their support.


\section{Actions induced by the involution}\label{involutionaction}

In this section, we define the action of an involution on a $3$-manifold (or a knot) on the corresponding Heegaard Floer chain complex (or the knot Floer chain complex). The definitions are essentially a consequence of the naturality results in Heegaard Floer homology, which are due to various authors Ozsv{\'a}th-Szab{\'o} \cite{OS3manifolds1, OSknots}, Juh{\'a}sz-Thurston-Zemke \cite{JTZ}, Hendricks-Manolescu \cite{HM}.

\subsection{Action on the 3-manifold}\label{3manifold}

We start with a tuple $(Y,\tau,z,\s)$ consisting of a connected oriented rational homology sphere $Y$, with a basepoint $z$ and an involution $\tau$ acting on it. We also let $\s$ represent a $\spinc$-structure on $Y$.
Furthermore, we require that $\tau$ fixes the $\spinc$ structure $\s$. 
 
Recall that the input for Heegaard Floer homology is the Heegaard data, $\mathcal{H}=(\Sigma,\alphas,\betas,z,J)$ for $(Y,z)$. We can then define a push-forward of this data by $\tau$, denoted as $\tau \mathcal{H}=(\tau \alphas, \tau \betas, \tau z, \tau J)$. This induces a tautological chain isomorphism \footnote{Although we write all the maps for the $\infty$-version of the chain complex, all the discussions continue to hold for $\{ -, + \}$-versions as well.}
\[
 t:CF^{\infty}(\mathcal{H},\s) \rightarrow CF^{\infty}(\tau \mathcal{H},\s),
 \]
obtained by sending an intersection point to its image under the diffeomorphism. If $\tau$ fixes the basepoint $z$ then $\mathcal{H}$ and $\mathcal{\tau \mathcal{H}}$ represent the same based $3$-manifold $(Y,z)$. The work of Juh\'asz-Thruston-Zemke \cite{JTZ} and Hendricks-Manolescu \cite{HM} then implies that there is a canonical chain homotopy equivalence induced by the Heegaard moves:
 \[
 \Phi(\tau \mathcal{H}, \mathcal{H}): CF^{\infty}(\tau \mathcal{H},\s) \rightarrow CF^{\infty}(\mathcal{H},\s).
  \]  
The mapping class group action of $\tau$ on the Heegaard Floer chain complex is then defined to be the composition of the above two maps:
\[
  \tau=\Phi(\tau \mathcal{H}, \mathcal{H},\s) \circ t : CF^{\infty}(\mathcal{H}) \rightarrow CF^{\infty}(\mathcal{H},\s).
\]
In the case where $\tau$ does \textit{not} fix the basepoint $z$, we define the action as follows. We take a diffeomorphism $h^{\gamma}:Y \rightarrow Y$ induced by the isotopy, taking $z$ to $\tau z$ along a path $\gamma$ connecting them. The diffeomorphism $\tau_{\gamma}:= h^{\gamma} \circ \tau $ then fixes the basepoint, and we define the $\tau$ action to be the action of $\tau_{\gamma}$. To this end we have the following:

\begin{proposition}\cite[Lemma 4.1]{dai2020corks}
Let $Y$ be a $\mathbb{Q}HS^{3}$ with an involution $\tau$.  Then the action $\tau: CF^{\infty}(\mathcal{H},\s) \rightarrow CF^{\infty}(\mathcal{H},\s)$ is a well-defined up to chain homotopy and is a homotopy involution. 

\end{proposition}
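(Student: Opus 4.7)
The plan is to establish two assertions: (i) the map $\tau$ on $CF^\infty(\mathcal{H},\s)$ is independent, up to chain homotopy, of the auxiliary choices in its construction (the Heegaard data $\mathcal{H}$ and, when $\tau z \neq z$, the isotopy path $\gamma$ from $z$ to $\tau z$); and (ii) $\tau \circ \tau \simeq \mathrm{id}$. The whole argument will be driven by the naturality package of Juh\'asz--Thurston--Zemke \cite{JTZ}, which guarantees that for any two admissible Heegaard data $\mathcal{H}$ and $\mathcal{H}'$ representing the same based 3-manifold $(Y,z)$, there is a distinguished transition chain homotopy equivalence $\Phi(\mathcal{H},\mathcal{H}')$, well-defined up to chain homotopy and satisfying the expected composition and identity laws up to homotopy.

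For independence of Heegaard data, I would take another Heegaard datum $\mathcal{H}'$ for $(Y,z)$ and verify that conjugating $\tau_\mathcal{H}$ by the transition map $\Phi(\mathcal{H},\mathcal{H}')$ produces $\tau_{\mathcal{H}'}$ up to homotopy. Since the tautological diffeomorphism map $t$ satisfies $t \circ \Phi(\mathcal{H},\mathcal{H}') \simeq \Phi(\tau \mathcal{H}, \tau \mathcal{H}') \circ t$ (the diffeomorphism intertwines transition maps by functoriality), this reduces to the composition law $\Phi(\tau \mathcal{H}', \mathcal{H}') \circ \Phi(\tau \mathcal{H}, \tau \mathcal{H}') \simeq \Phi(\tau \mathcal{H}, \mathcal{H}')$ and the naturality of $\Phi$ with respect to isotopy.

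The subtler issue is independence of the path $\gamma$ when $\tau$ does not fix $z$. Two such paths $\gamma, \gamma'$ differ by a loop $\ell = \gamma \cdot (\gamma')^{-1}$ based at $z$, so $\tau_\gamma \circ \tau_{\gamma'}^{-1}$ is chain homotopic to the basepoint-moving automorphism $\Psi_\ell$ along $\ell$. By Zemke's analysis of basepoint-moving maps, $\Psi_\ell$ descends to the $H_1(Y;\Z)$-action on $CF^\infty$. Since $Y$ is a $\Q HS^3$, $H_1(Y;\Z)$ is finite, and I would invoke the standard fact that the induced $H_1$-action on the $U$-localized complex $CF^\infty(Y,\s)$ is chain homotopic to the identity (the Floer-theoretic $H_1$-action factors through $\Lambda^*(H_1/\Tors)$, which vanishes in positive degree for a $\Q HS^3$). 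This is the step I expect to be the genuine obstacle, since it must be verified at the chain level rather than on homology; I would do so by writing $\Psi_\ell$ explicitly via Zemke's $\Phi$-$\Psi$ formalism and observing that for a torsion class $[\ell]$ the resulting perturbation of $\mathrm{id}$ is null-homotopic.

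Finally, for the homotopy involution property, since $\tau^2 = \mathrm{id}$ as a self-diffeomorphism of $(Y,z)$ (or $(Y, \tau^2 z) = (Y,z)$ after absorbing the path into the first step), a direct computation gives
\[
\tau \circ \tau = \Phi(\tau \mathcal{H}, \mathcal{H}) \circ t \circ \Phi(\tau \mathcal{H}, \mathcal{H}) \circ t \simeq \Phi(\tau \mathcal{H}, \mathcal{H}) \circ \Phi(\tau^2 \mathcal{H}, \tau \mathcal{H}) \circ t^2 \simeq \Phi(\mathcal{H}, \mathcal{H}),
\]
using the intertwining $t \circ \Phi(\tau\mathcal{H}, \mathcal{H}) \simeq \Phi(\tau^2 \mathcal{H}, \tau \mathcal{H}) \circ t$, the composition law for transition maps, and $t^2 = \mathrm{id}$. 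The final term is chain homotopic to the identity by part (i) above. When $\tau z \neq z$, one first checks that the paths chosen for defining $\tau$ and $\tau$ concatenate to a path from $z$ to $z$ whose class is well-defined up to homotopy by part (ii), and then applies the same argument combined with path independence to conclude $\tau^2 \simeq \mathrm{id}$.
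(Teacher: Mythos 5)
Your proposal is correct and takes essentially the same approach as the paper. The paper simply cites \cite[Lemma 4.1]{dai2020corks} and observes that the only adaptation from $\mathbb{Z}HS^3$ to $\mathbb{Q}HS^3$ is the triviality up to $U$-equivariant chain homotopy of the $\pi_1$-action (\cite[Theorem D]{Zemkegraph}), which is precisely the point you correctly single out as the genuine obstacle; the rest of your argument (transition-map bookkeeping, path independence, $t^2 = \mathrm{id}$) spells out the standard naturality package underlying that cited lemma.
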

    
\begin{proof}\label{prop:tau}
This is shown in \cite[Lemma 4.1]{dai2020corks}. The discussion in there is phrased in terms of integer homology spheres which continue to hold in the realm of rational homology spheres, after observing that the $\pi_1$-action on chain complex is still $U$-equivariantly homotopic to the identity, \cite[Theorem D]{Zemkegraph}. 
 \end{proof}
\noindent 
Since $\tau$ is independent of the choice of the Heegaard data, we will avoid specifying it and represent $\tau$ as: 
\[
\tau: CF^{\infty}(Y,z,\s) \rightarrow CF^{\infty}(Y,z,\s).
\] 
It can also be easily checked that the maps $h^{\gamma}$ and $t$ both commute (up to homotopy) with the chain homotopy equivalence $\Phi$ induced by the Heegaard moves, interpolating between two different Heegaard data representing the same based $3$-manifold. Hence both maps $h^{\gamma}$ and $t$ do not depend on the choice of Heegaard data as well. This allows us to write $\tau$ as a composition:
\[
CF^{\infty}(Y,z,\s_{0}) \xrightarrow{t} CF^{\infty}(Y,tz,\s) \xrightarrow{h^{\gamma}_1} CF^{\infty}(Y,z,\s_{0}).
\]
Before moving on, we will digress and show that up to a notion of equivalence the choice of basepoint in the definition of $\tau$ can be ignored. 
\noindent
We now recall the definition of $\iota$-complexes and local equivalence below, these were first introduced by Hendricks-Manolescu-Zemke \cite{HMZ}.

\begin{definition}\label{iota}
An $\iota$-\textit{complex} is a pair $(C,\iota)$ where 

\begin{itemize}
 
\item $C$ is a finitely-generated, free, $\mathbb{Z}$-graded chain complex over $\mathbb{Z}_{2}[U]$ such that 

\[
U^{-1}H_{*}(C) \cong \mathbb{Z}_{2}[U,U^{-1}] 
\]

Where $U$ has degree 2,

\item  and $\iota: C \rightarrow C$ is a grading preserving $U$-equivariant chain map such that $\iota ^{2} \simeq \mathrm{id}$.

\end{itemize}
\end{definition} 
\noindent
In \cite{HMZ}, the authors defined an equivalence relation on the set of such $\iota$-complexes.

\begin{definition}\label{localequi}
Two $\iota$-complex $(C_1,\iota_1)$ and $(C_2,\iota_2)$ are said to be \textit{locally equivalent} if 

\begin{itemize}

\item There are grading preserving $U$-equivariant chain maps $f:C_1 \rightarrow C_2$ and $g: C_2 \rightarrow C_1$ that induce isomorphism in homology after localizing with respect to $U$,

\item and $f \circ \iota_1 \simeq \iota_2 \circ f$, \; $g \circ \iota_1 \simeq \iota_2 \circ g$.
\end{itemize}
\end{definition}

\noindent
We now have the following: 
 \begin{proposition}\label{localbase}
 The local equivalence class of $(CF^{-}(Y,z,\s),\tau)$ is independent of the choice of basepoint $z$.
 \end{proposition}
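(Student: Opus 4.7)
The natural candidate for a local equivalence between $(CF^{-}(Y,z,\s), \tau_z)$ and $(CF^{-}(Y,z',\s), \tau_{z'})$ is the basepoint-moving chain homotopy equivalence $B_\delta \colon CF^{-}(Y,z,\s) \to CF^{-}(Y,z',\s)$ associated, via the naturality theorems of \cite{JTZ}, to a choice of embedded path $\delta$ from $z$ to $z'$, together with a quasi-inverse $B_{\bar\delta}$ in the opposite direction. Both maps are grading-preserving, $U$-equivariant chain homotopy equivalences, so they certainly induce isomorphisms on $U$-localized homology. The content of the proposition therefore reduces to the claim that $B_\delta$ (and symmetrically $B_{\bar\delta}$) intertwines the two involution actions up to chain homotopy.

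To verify this, I will expand the definitions: $\tau_z \simeq B_{\gamma_z} \circ t$, where $t$ denotes the tautological map induced by the diffeomorphism $\tau$ and $\gamma_z$ is a chosen path from $\tau z$ to $z$, and similarly $\tau_{z'} \simeq B_{\gamma_{z'}} \circ t$. Naturality of the diffeomorphism action (each side of which is a choice of Heegaard data) yields $t \circ B_\delta \simeq B_{\tau \delta} \circ t$, and the composition law for basepoint-moving maps along concatenated paths reduces the comparison $B_\delta \circ \tau_z \simeq \tau_{z'} \circ B_\delta$ to the identity
$B_{\gamma_z \cdot \delta} \simeq B_{\tau \delta \cdot \gamma_{z'}}$
of basepoint-moving maps along two paths from $\tau z$ to $z'$. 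These two paths differ by a single loop $\alpha$ based at $\tau z$.

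The final step is to invoke Zemke's theorem \cite[Theorem D]{Zemkegraph}: the basepoint-moving action of $\alpha$ on $CF^{-}(Y, \tau z, \s)$ is precisely the action of $[\alpha] \in \pi_1(Y, \tau z)$, which is $U$-equivariantly chain homotopic to the identity. This immediately gives $B_{\gamma_z \cdot \delta} \simeq B_{\tau \delta \cdot \gamma_{z'}}$, hence the desired intertwining $B_\delta \circ \tau_z \simeq \tau_{z'} \circ B_\delta$. The analogous argument for $B_{\bar\delta}$ is symmetric, so $B_\delta$ exhibits a local equivalence.

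I do not anticipate any serious obstacle beyond bookkeeping; the entire argument is a packaging of naturality together with the $U$-equivariant $\pi_1$-triviality, and closely parallels the proof of \cite[Lemma 4.1]{dai2020corks} showing that $\tau_z$ itself is well-defined up to chain homotopy. The one point that will deserve explicit care is that all the homotopies appearing are $U$-equivariant, since rational (as opposed to integer) homology spheres are allowed; this is exactly the reason Zemke's strengthened $\pi_1$-triviality is the relevant input.
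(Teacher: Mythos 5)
Your proof is correct and follows essentially the same route as the paper: the paper's two-square diagram encodes exactly the two facts you isolate, namely naturality of the pushforward $t$ past basepoint-moving maps (the upper square, via diffeomorphism invariance of graph cobordisms) and $U$-equivariant triviality of $\pi_1$-loops for rational homology spheres (the lower square, via Zemke's Theorem D). The only difference is presentational — the paper works with the diagram directly while you unwind it into a statement about concatenated paths differing by a loop — but the ingredients and their roles are identical.
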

\begin{proof}
Let $z_1$ and $z_2$ be two different basepoints. We start by taking a finger-moving diffeomorphism taking $z_1$ to $z_2$ along a path ${\tilde{\gamma}}$. We also take analogous finger moving diffeomorphism $\gamma_{i}$ joining $\tau z_{i}$ to $z_{i}$. Following the notation from the previous discussion, we have the following diagram which commutes up to chain homotopy.
\[\begin{tikzcd}
{CF}^{-}(Y,z_1) \arrow{r}{h^{\tilde{\gamma}}} \arrow{d}{t} & {CF}^{-}(Y,z_2) \arrow{d}{t} \\
{CF}^{-}(Y,\tau z_1) \arrow{r}{h^{\tau \tilde{\gamma}}} \arrow{d}{h^{\gamma_{1}}}  & {{CF}^{-}(Y,\tau z_2)} \arrow{d}{h^{\gamma_{2}}} \\
{CF}^{-}(Y,z_1) \arrow{r}{h^{\tilde{\gamma}}}  & {{CF}^{-}(Y, z_2)}
\end{tikzcd}
\]
Here the upper square commutes up to chain homotopy because of diffeomorphism invariance of cobordisms proved by \cite[Theorem A]{Zemkegraph} and the lower square commutes, as a consequence of triviality of $\pi_1$-action up to chain homotopy for rational homology spheres \cite[Theorem D]{Zemkegraph}. 
The two vertical compositions define the $\tau$ action on the chain complex $CF^{-}(Y,z_1)$ and $CF^{-}(Y,z_2)$ respectively. The map $h^{\tilde{\gamma}}$ defines a local map between the respective $CF$ complexes. One can now similarly define local maps going the other way, by considering the inverse of the path $\tilde{\gamma}$. 
 
\end{proof}

 \subsection{Action on the knot}\label{action_knot}
 
For this subsection, we will restrict ourselves to knots in integer homology spheres. Let $(Y,K,\tau,w,z)$ be a tuple, where $(Y,K,w,z)$ represents a doubly-based knot $(K,w,z)$ embedded in $Y$, and $\tau$ is an orientation preserving involution on $(Y,K)$. We now consider two separate families of such tuples depending on how they act on the knot.

\begin{definition}
Given $(Y,K,\tau,w,z)$ as above, we say that $K$ is \textit{2-periodic} if $\tau$ has no fixed points on $K$ and it preserves the orientation on $K$. On the other hand, we will say that $K$ is a \textit{strongly invertible} if $\tau$ has two fixed points when restricted to $K$. Moreover, it switches the orientation of $K$.  
\end{definition}
Since we are dealing only with involutions in this paper we will abbreviate 2-periodic knots as periodic. Both periodic and strong involutions induce actions on the knot Floer complex. Let us consider the periodic case first.
 As before, we start with a Heegaard data $\mathcal{H}_{K}:=(\Sigma,\alphas, \betas, w,z)$. There is a tautological chain isomorphism:\footnote{In order to distinguish the action of $\tau_K$ from the action of push-forward of $\tau$, we represent the push-forward map by $t_K$. A similar notation will also be implied for $3$-manifolds.}
 \[
 t_{K}: CFK^{\infty}(\mathcal{H}_{K}) \rightarrow CFK^{\infty}(\tau \mathcal{H}_{K}).
  \]
 Now note that $\tau \mathcal{H}_{K}$ represents the same knot inside $Y$ although the basepoints $(w,z)$ have moved to $(\tau w,\tau z)$. So we apply a diffeomorphism $\rho$, obtained by an isotopy taking  $\tau z$ and $\tau w$ back to $z$ and $w$ along an arc of the knot, following its orientation. We also require the isotopy to be identity outside a small neighborhood of the arc. $\rho \tau \mathcal{H}_K$ now represents the based knot $(Y,K,w,z)$. So by work of Juh{\'a}sz-Thurston-Zemke \cite{JTZ}, Hendricks-Manolescu \cite[Proposition 6.1]{HM}, there is a sequence of Heegaard moves relating the $\rho \tau \mathcal{H}_K$ and $ \mathcal{H}_K$ inducing a chain homotopy equivalence, 
 \[
 \Phi(\rho\tau \mathcal{H}_K, \mathcal{H}_K) : CFK^{\infty}(\rho\tau \mathcal{H}_K) \rightarrow CFK^{\infty}(\mathcal{H}_K).
 \]
 We now define the $\tau$ action to be: 
 \[
 \tau_K:= \Phi(\rho \tau \mathcal{H}_K, \mathcal{H}_K) \circ t_{K}:CFK^{\infty}(\mathcal{H}_K) \rightarrow CFK^{\infty}(\mathcal{H}_K).
 \]
The chain homotopy type of $\tau_K$ is independent of the choice of Heegaard data. This is again a consequence of the naturality results proved in \cite{HM}. So, by abusing notation we will write $\tau_K$ as:
 \[
\tau_{K}: CFK^{\infty}(K,w,z) \rightarrow CFK^{\infty}(K,w,z).
\]
Sarkar in \cite{sarkar2015moving} studied a specific action on the knot Floer complex obtained by moving the two basepoints once around the orientation of the knot, which amounts to applying a full Dehn twist along the orientation of the knot. We refer to this map as the Sarkar map $\varsigma$. This map $\varsigma$ is a filtered, grading-preserving chain map which was computed by \cite{sarkar2015moving} and later in full generality for links by Zemke \cite{zemke2017quasistabilization}. In particular, we have $\varsigma^{2} \simeq \textrm{id}$ \cite{sarkar2015moving, zemke2017quasistabilization}. Analogous to the case for the $3$-manifolds, one can enquire whether $\tau_{K}$ is a homotopy involution. It turns out that, it is not a homotopy involution in general but $\tau_{K}^{4} \simeq \textrm{id}$, as a consequence of the following Proposition.

 \begin{proposition}\label{prop:tauK}
Let $Y$ be a $\mathbb{Z}HS^3$ and $(K,\tau,w,z)$ be a doubly-based periodic knot on it. Then $\tau_{K}$ is a grading preserving, filtered map that is well-defined up to chain homotopy and $\tau^{2}_{K} \simeq \varsigma$.
 \end{proposition}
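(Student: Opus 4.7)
The plan is to handle the three claims---well-definedness, preservation of grading and filtration, and $\tau_K^2 \simeq \varsigma$---in sequence, with only the last requiring substantive work. I will package $\tau_K$ as the Juhász--Thurston--Zemke mapping class group action of the \emph{basepoint-fixing} diffeomorphism $\rho \tau : (Y,K,w,z) \to (Y,K,w,z)$; by construction this point of view is equivalent to the definition $\tau_K = \Phi(\rho\tau\mathcal{H}_K,\mathcal{H}_K) \circ t_K$ given in the text.

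From this repackaging, well-definedness up to chain homotopy is immediate from JTZ naturality applied to doubly-based knots: any two Heegaard data representing $(Y,K,w,z)$ are connected by a sequence of Heegaard moves whose associated $\Phi$'s are canonical up to chain homotopy, and $t_K$ depends only on the diffeomorphism, so a change of data alters $\tau_K$ only through a canonical chain homotopy. Preservation of the Alexander grading, Maslov grading, and $(i,j)$-filtration is equally standard: each is an intrinsic invariant of intersection points in any Heegaard diagram for $(Y,K,w,z)$, and the elementary handleslide, isotopy, and stabilization maps constituting $\Phi$ are all filtered and grading-preserving.

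For the main assertion, I will use functoriality of the JTZ action (up to chain homotopy) to identify $\tau_K \circ \tau_K$ with the action of $(\rho\tau)^2$. Since $\tau^2 = \mathrm{id}_Y$ on the nose,
\[
(\rho\tau)^2 \;=\; \rho \cdot (\tau \rho \tau^{-1}).
\]
By construction $\rho$ is a finger-moving isotopy carrying $(\tau w, \tau z)$ to $(w,z)$ along a chosen arc $A \subset K$ following the orientation of $K$, so its $\tau$-conjugate $\tau\rho\tau^{-1}$ carries $(w,z) = \tau(\tau w,\tau z)$ to $(\tau w, \tau z)$ along $\tau(A)$. Because $\tau$ restricts to an orientation-preserving free involution of $K \cong S^1$---that is, a rotation by $\pi$---the arc $\tau(A)$ is precisely the complementary oriented arc of $K$ connecting $w$ to $\tau w$. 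The concatenation $\rho \cdot (\tau\rho\tau^{-1})$ therefore sweeps the basepoints once around $K$ in the direction of its orientation.

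Invoking the computation of Sarkar \cite{sarkar2015moving}, extended to links by Zemke \cite{zemke2017quasistabilization}, the JTZ action induced by a once-around basepoint loop on a knot is exactly the Sarkar map $\varsigma$, which gives $\tau_K^2 \simeq \varsigma$. The main obstacle, should one want a fully rigorous argument, is the functoriality step: identifying $\tau_K \circ \tau_K$ with $((\rho\tau)^2)_*$ requires careful handling of the tautological identification between $CFK^\infty(\tau\mathcal{H}_K)$ and $CFK^\infty(\rho\tau\mathcal{H}_K)$ and of the JTZ equivalence applied to nested compositions of diffeomorphisms, together with orientation bookkeeping for the arc $A$ and its $\tau$-image to ensure the concatenation genuinely realises a single oriented loop around $K$ rather than a nullhomotopic arc.
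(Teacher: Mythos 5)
Your argument is correct and follows essentially the same route as the paper, which simply observes that $\tau_K^2$ moves the basepoints once around $K$ along its orientation (deferring the details to the analogous computation of $\iota_K^2 \simeq \varsigma$ in Hendricks--Manolescu). Your repackaging of $\tau_K$ as the JTZ action of the basepoint-fixing diffeomorphism $\rho\tau$ and the decomposition $(\rho\tau)^2 = \rho\cdot(\tau\rho\tau^{-1})$ into finger moves along the two complementary arcs (using that $\tau|_K$ is a free orientation-preserving involution) is exactly the content of the paper's sketch, made explicit.
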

 \begin{proof}
The proof is similar to that of Hendricks-Manolescu \cite[Lemma 2.5]{HM} after some cosmetic changes, so we will omit the proof. The main idea is that since the definition $\tau_{K}$ involves the basepoint moving map taking $(w,z)$ to $(\tau w ,\tau z)$, $\tau_{K}^{2}$ results in moving the pair $(w,z)$ once around the knot $K$ along its orientation back to $(w,z)$ which is precisely the Sarkar map. $\tau_K$ is grading preserving and filtered since all the maps involved in its definition are. Finally, one can check that $\tau_K$ is well-defined by showing each map involved in the definition of $\tau_K$ commutes with homotopy equivalence $\Phi$ induced by the Heegaard moves that interpolates between two different Heegaard data for the same doubly-based knot.
 \end{proof}
We now define the action for strong involutions. Note that in this case, $\tau$ reverses the orientation of the knot $K$. Since knot Floer chain complex is an invariant (up to chain homotopy) of oriented knots, we do not \textit{a priori} have an automorphism of $K$. However, it is still possible to define an involution on the knot Floer complex induced by $\tau$.

As before, we start by taking a Heegaard data $\mathcal{H}_K=(\Sigma,\alphas,\betas,w,z)$ for $(Y,K,w,z)$. Recall that  the knot $K$ intersects $\Sigma$ positively at $z$ and negatively $w$. Note $\mathcal{H}_{K^{r}}=(\Sigma,\alphas,\betas,z,w)$ then represents $(Y,K^{r},z,w)$. \footnote{Here $K^{r}$ represent $K$ with its orientation reversed.} Furthermore there is an obvious correspondence between the intersection points of these two diagrams. In order to avoid confusion for an intersection point in $\mathcal{H}_{K}$, $\x \in \mathbb{T}_{\alpha} \cap  \mathbb{T}_{\beta}$ we will write the corresponding intersection point in $\mathcal{H}_{K^{r}}$ as $\x^{\prime}$. Now note that there is a tautological grading preserving \textit{skew-filtered} chain isomorphism:
\[ 
sw: CFK^{\infty}(\mathcal{H}_{K}) \rightarrow CFK^{\infty}(\mathcal{H}_{K^{r}}),
\]
obtained by \textit{switching} the order of the basepoints $z$ and $w$. More specifically, recall that $CFK^{\infty}(\mathcal{H}_{K})$ is $\mathbb{Z}\oplus \mathbb{Z}$-filtered chain complex, generated by triples $[\x,i,j]$. So we define, 
\[
sw[\x,i,j]=[\x^{\prime},j,i].
\]
This map is \textit{skew-filtered} in the sense that if we take the filtration on the range to be 
\[
\bar{\mathcal{F}}_{z,w}([\x^{\prime},i,j])=(j,i),
\]
then $sw$ is filtration preserving. 

\noindent
Let us now define the action of $\tau$ on the knot Floer complex. To begin with, we place the basepoints $(w,z)$ in such a way that they are switched by the involution $\tau$. This symmetric placement of the basepoints will be an important part of our construction of the action. Let us denote the map in knot Floer complex, obtained by pushing-forward $\mathcal{H}_K$ by $\tau$, as $t_{K}$.
\[
t_{K}: CFK^{\infty}(\mathcal{H}_{K}) \rightarrow CFK^{\infty}(\tau \mathcal{H}_{K}).
\]
Then by \cite[Proposition 6.1]{HM} there is a chain homotopy equivalence $\Phi$ induced by a sequence of Heegaard moves connecting $\mathcal{H}_{K^{r}}$ and $\tau \mathcal{H}_{K}$ since they both represent the same based knot $(Y,K^{r},z,w)$. Finally, we apply the $sw$ map to get back to the original knot Floer complex,
\[
sw: CFK^{\infty}(\H_{K^{r}}) \rightarrow CFK^{\infty}(\H_{K}).
\]
The action $\tau_K$ on the knot Floer complex is then defined to be the composition of the maps above, 
\[
CFK^{\infty}(\mathcal{H}_{K}) \xrightarrow{t_{K}} CFK^{\infty}(\tau \mathcal{H}_{K}) \xrightarrow{\Phi} CFK^{\infty}(\mathcal{H}_{K^{r}}) \xrightarrow{sw} CFK^{\infty}(\mathcal{H}_{K}).
\]
Note that all the maps involved in the definition above are well defined i.e. independent of the choice of the Heegaard diagram up to chain homotopy. For example, one can check that for any two different choices $\mathcal{H}_{K}$ and $\mathcal{H}^{\prime}_{K}$ of Heegaard data for $(Y,K,w,z)$, the $sw$ map commutes with the homotopy equivalence induced by the Heegaard moves $\Phi$. 
\[
\begin{tikzcd}[column sep =large, row sep =large]
CFK^{\infty}(\mathcal{H}_{K}) \arrow{r}{sw} \arrow{d}{\Phi} & CFK^{\infty}(\mathcal{H}_{K^{r}}) \arrow{d}{\overline{\Phi}} \\
CFK^{\infty}(\mathcal{H}^{\prime}_{K}) \arrow{r}{sw}  & CFK^{\infty}(\mathcal{H}^{\prime}_{K^{r}})
\end{tikzcd}
\]
\noindent Here $\overline{\Phi}$ represents the map induced from the moves used to define $\Phi$, after switching the basepoints. More precisely, the Heegaard moves constitute stabilizations(destabilizations), the action of based diffeomorphisms that are isotopic to identity, and isotopies and handleslides among $\alpha$, $\beta$-curves in the complement of the basepoint. Commutation is tautological for based stabilizations. For other types of Heegaard moves, the induced map $\Phi$ is given by counting pseudo-holomorphic triangles $\psi$, for which the commutation of the above diagram is again tautological.

Using this observation, we find it useful to express the $\tau_{K}$ as the composition of maps of transitive chain homotopy category, without referencing the underlying Heegaard diagrams used to define these maps
\[
CF^{\infty}(Y,K,w,z) \xrightarrow{t_{K}} CF^{\infty}(Y,K^{r},\tau w,\tau z) \xrightarrow{sw} CF^{\infty}(Y,K,w,z).
\]
In contrast to the case for periodic knots, we show that $\tau_{K}$ is a homotopy involution.

\begin{proposition}\label{strong}
Let $Y$ be a $\mathbb{Z}HS^3$ and $(K,\tau,w,z)$ be a doubly-based strongly invertible knot in it. Then the induced map $\tau_{K}$ a well defined map up to chain homotopy. Furthermore it is a grading preserving skew-filtered involution on $CFK^{\infty}(Y,K)$, hence we get $\tau_{K}^{2} \simeq \textnormal{id}$.
\end{proposition}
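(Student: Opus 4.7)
\emph{Proof plan.} The plan is to verify three properties in turn: well-definedness of $\tau_{K}$ up to chain homotopy, the grading-preserving skew-filtered structure, and the involutive identity $\tau_{K}^{2} \simeq \mathrm{id}$. For well-definedness, each of the three constituent maps $t_{K}$, $\Phi$, and $sw$ is individually defined up to chain homotopy by the naturality results of Juh\'asz--Thurston--Zemke and Hendricks--Manolescu. What still needs to be checked is that the composition is independent of the choice of initial Heegaard data $\mathcal{H}_{K}$. This reduces to commutation of each of $t_{K}$ and $sw$ with the Heegaard-move equivalences $\Phi$: the square for $sw$ and $\Phi$ is already noted in the excerpt, and the analogous commutation for $t_{K}$ follows because push-forward by a diffeomorphism commutes up to homotopy with Heegaard moves by naturality. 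Stitching the two squares together shows that any change of $\mathcal{H}_{K}$ alters $\tau_{K}$ only by a chain homotopy.

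For the grading and filtration statements, $t_{K}$ is a chain isomorphism that preserves every grading and filtration with respect to the transported basepoints, and $\Phi$ does the same by naturality. The only nontrivial map is $sw$, which sends $[\x,i,j]$ to $[\x',j,i]$: it is skew-filtered by construction, and it is grading preserving because the Maslov grading on $CFK^{\infty}(\mathcal{H}_{K^{r}})$ is computed using the basepoint formerly called $z$, which makes the two formulas for the Maslov gradings of $[\x,i,j]$ and $[\x',j,i]$ agree. Since composing a skew-filtered map with two filtered maps yields a skew-filtered map, and grading preservation is closed under composition, $\tau_{K}$ has the stated form.

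The main content is $\tau_{K}^{2} \simeq \mathrm{id}$. Passing to the transitive chain homotopy category, we may absorb the maps $\Phi$ and write $\tau_{K} \simeq sw \circ t_{K}$, so
\[
\tau_{K}^{2} \;\simeq\; sw \circ t_{K} \circ sw \circ t_{K}.
\]
The key observation is that $t_{K}$ and $sw$ commute up to chain homotopy, since they act on independent data: $t_{K}$ is the geometric push-forward by the diffeomorphism $\tau$, while $sw$ is a tautological relabeling of which basepoint is declared first. On intersection points one has $sw \circ t_{K}(\x) = (\tau \x)'$ and $t_{K} \circ sw(\x) = \tau(\x')$, which designate the same point in the same Heegaard surface up to bookkeeping. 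Commuting them yields $\tau_{K}^{2} \simeq sw^{2} \circ t_{K}^{2}$. Now $sw^{2} = \mathrm{id}$ on the nose, as swapping the two basepoint roles twice restores the original labeling; and $t_{K}^{2} \simeq \mathrm{id}$ because $\tau^{2} = \mathrm{id}_{Y}$ and push-forward by the identity diffeomorphism is chain homotopic to the identity (with the naturality maps from the previous step absorbing this homotopy). I expect the commutation of $t_{K}$ and $sw$ to be the step that requires most care: it is precisely the symmetric placement $\tau w = z$, $\tau z = w$ that prevents an intermediate isotopy of the basepoints along $K$, and so avoids producing the Sarkar map that appears in the periodic setting of Proposition~\ref{prop:tauK}.
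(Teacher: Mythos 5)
Your proposal matches the paper's proof: both absorb the Heegaard-move equivalences into the transitive chain homotopy category so that $\tau_K \simeq sw \circ t_K$, then use the tautological commutation $sw \circ t_K \simeq t_K \circ sw$ together with $sw^2 = \mathrm{id}$ and $t_K^2 \simeq \mathrm{id}$ to conclude $\tau_K^2 \simeq \mathrm{id}$, and derive the skew-filtered, grading-preserving properties from those of the constituent maps. The concluding remark that the symmetric basepoint placement is what spares you an intermediate basepoint-moving isotopy (and hence the Sarkar map) correctly identifies the contrast with the periodic case and with the asymmetric placement treated in Proposition~\ref{strongskewbasepoint}.
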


\begin{proof}
The discussion above shows that the definition of $\tau_K$ is independent of the choice of a Heegaard diagram up to homotopy equivalence. Since the map $sw$ is skew-filtered, $\tau_K$ is skew-filtered.
We now observe that push-forward map commute with the $sw$ map.
\[
sw \circ t_{K} \simeq t_{K} \circ sw
\]
Additionally, since $t_K^{2} \simeq \mathrm{id}$, $sw^{2}=\mathrm{id}$, we get $\tau^{2} \simeq \mathrm{id}$.

\end{proof}
\noindent
Readers may wonder if we had placed the basepoints $(w,z)$ asymmetrically, whether that would affect the Proposition~\ref{strong}. Indeed, there are several different options for the placement of the basepoints. The action $\tau_K$ and the proof of Proposition~\ref{strong} for all such cases are analogous to the one discussed. Instead of exhaustively considering all possible placement of basepoints for the action, let us only demonstrate the case where the basepoints $(w,z)$ are placed asymmetrically and close to each other as in Figure~\ref{strongfigure} (In Subsection~\ref{KK} we also define the action when the basepoint are fixed by $\tau$).
\begin{figure}[h!]
\centering
\includegraphics[scale=.8]{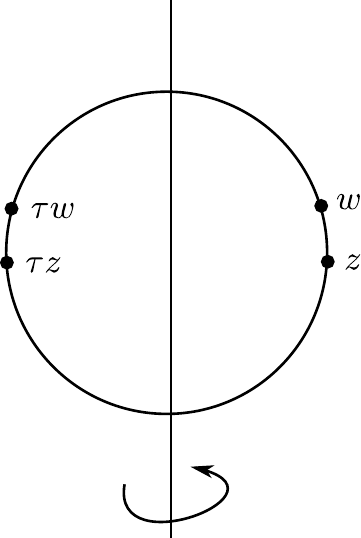} 
\caption
{Asymmetric placement of basepoints.} \label{strongfigure}
\end{figure}
For such a placement of basepoint, the action of $\tau_K$ is defined similarly for the symmetrically placed basepoint case, except now we need a basepoint moving map $\rho$. Specifically, the definition is given as the composition of the maps below
\[
CFK^{\infty}(\mathcal{H}_{K}) \xrightarrow{t_{K}} CFK^{\infty}(\tau \mathcal{H}_{K}) \xrightarrow{\rho} CFK^{\infty}(\rho \tau\mathcal{H}) \xrightarrow{\Phi} CFK^{\infty}(\mathcal{H}_{K^{r}}) \xrightarrow{sw} CFK^{\infty}(\mathcal{H}_{K})
\]
Here $\rho$ represents the basepoint moving map along the orientation of $K^{r}$ taking $(\tau w, \tau z)$ to $(z,w)$. Now since $\rho \tau\mathcal{H}$ and $\H_{K^{r}}$ both represent the same double based knot $(K^{r},z,w)$, there is a chain homotopy equivalence $\Phi$ between them induced by Heegaard moves relating the two Heegaard diagrams. As before, all of the maps in the composition above are independent of the choice of the Heegaard diagram used to define them up to chain homotopy. We now show that despite the appearance of basepoint moving map $\rho$, the action $\tau_K$ (defined in the way above) is a homotopy involution. This is in contrast to the situation with $\iota_K$ (the $\spinc$-conjugation action on the knot) or $\tau_K$ for the periodic symmetry action, both of those maps square to the Sarkar map $\varsigma$ (see Proposition~\ref{prop:tauK}).

\begin{proposition}\label{strongskewbasepoint}
Let $Y$ be a $\mathbb{Z}HS^3$ and $(K,\tau,w,z)$ be a doubly-based strongly invertible knot in it, Moreover the basepoints $(w,z)$ in $K$ are placed as in Figure~\ref{strongfigure}. Then the induced map $\tau_{K}$ a well defined map up to chain homotopy. Furthermore, it is a grading preserving skew-filtered involution on $CFK^{\infty}(Y,K)$, $\tau_{K}^{2} \simeq \textnormal{id}$.
\end{proposition}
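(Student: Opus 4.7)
The plan is to follow the argument of Proposition~\ref{strong}, tracking carefully the additional basepoint-moving map $\rho$ that now appears in the definition. Well-definedness is standard: each of the constituent maps $t_K$, $\rho$, $\Phi$, and $sw$ commutes (up to chain homotopy) with the Heegaard-move equivalences between any two choices of Heegaard data for $(K,w,z)$, so the chain-homotopy type of $\tau_K$ is independent of those choices. The grading and filtration properties are also immediate: $t_K$, $\rho$, and $\Phi$ are grading-preserving and filtration-preserving, while $sw$ is grading-preserving and skew-filtered, so the composition inherits the skew-filtered structure.

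The main step is the verification that $\tau_K^{2} \simeq \mathrm{id}$. Writing
\[
\tau_K^{2} = sw \circ \Phi \circ \rho \circ t_K \circ sw \circ \Phi \circ \rho \circ t_K,
\]
I would use that $sw$ and $\Phi$ both commute (up to homotopy) with the pushforward $t_K$, together with $sw^{2}=\mathrm{id}$ and $t_K^{2} \simeq \mathrm{id}$, to reduce the problem, up to homotopy, to analyzing the composite of $\rho$ with its $t_K$-conjugate $t_K \rho t_K^{-1}$. This composite is realized by a basepoint isotopy of $(w,z)$ on $K$: the first leg is the arc $\gamma$ used to define $\rho$ (taking $(\tau w,\tau z)$ back to $(z,w)$ along $K^{r}$), and the second leg is its image $\tau\gamma$ under the pushforward.

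The geometric content, and the main obstacle, is to verify that the concatenation $\gamma \cdot \tau\gamma$ is null-homotopic as a loop on $K$. Here the strongly invertible hypothesis enters in an essential way: because $\tau$ \emph{reverses} the orientation of $K$ and exchanges the two arcs of $K$ cut out by the two fixed points, the arc $\tau\gamma$ is the reverse traversal of $\gamma$ (with respect to $K^{r}$), so the concatenation bounds a disk on $K$ and is trivial in $\pi_{1}(K)$. By the basepoint-moving results of Sarkar \cite{sarkar2015moving} and Zemke \cite{zemke2017quasistabilization}, this implies that the resulting map on $CFK^{\infty}$ is chain homotopic to the identity, giving $\tau_K^{2} \simeq \mathrm{id}$. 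This is in sharp contrast to the periodic case of Proposition~\ref{prop:tauK}, where $\tau$ preserves orientation and the analogous concatenation wraps once around $K$, producing the Sarkar map $\varsigma$ rather than the identity.
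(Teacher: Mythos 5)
Your proof is correct and takes essentially the same approach as the paper's. Both arguments reduce, after commuting $sw$ and the Heegaard-move maps past $t_K$, to the observation that the basepoint-moving arc and its $\tau$-image concatenate to a null-homotopic loop on $K$ (because $\tau$ reverses the orientation of $K$), whereas in the periodic case they wrap once around $K$ and give the Sarkar map; the paper makes this explicit via the three arc maps $\rho$, $\overline{\rho}$, $\underline{\rho}$ and the relation $\overline{\rho}\circ\underline{\rho}\simeq\mathrm{id}$, which is exactly your cancellation.
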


\begin{proof}
Firstly we note that as before we have $t_{K}$ and $sw$ commute tautologically. 
\[
sw \circ t_{K} \simeq t_{K} \circ sw.
\]
Let us now define the basepoint moving maps $\rho$, $\overline{\rho}$ and $\underline{\rho}$ by pushing the basepoints along the orientated arcs as in Figure~\ref{rhomaps}. The arcs are a part of the knot, but they are drawn as push-offs. Note that the underlying unoriented arc is the same for all three maps. In fact, the maps $\rho$ and $\underline{\rho}$ are exactly the same diffeomorphism. Hence it follows that
\[
\rho \circ sw \simeq sw \circ \underline{\rho}.
\]
Moreover, since $\tau$ sends the arc $\rho$ to $\overline{\rho}$, we get,
\[
 t_{K} \circ \overline{\rho} \simeq {\rho} \circ t_{K}.
\]
\begin{figure}[h!]
\centering
\includegraphics[scale=0.7]{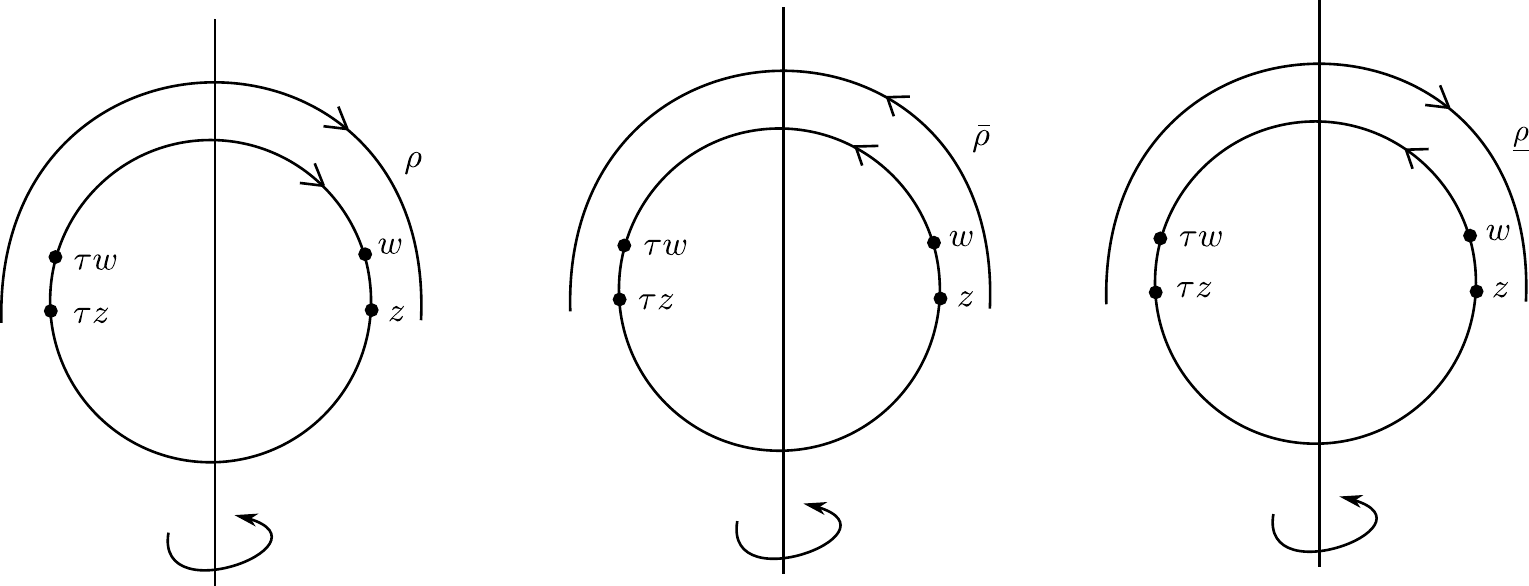} 
\caption{The orientation of the arcs used to define finger moving isotopy.}\label{rhomaps}
\end{figure} 


Using the above relations get the following sequence of chain homotopies
\begin{align*}
\tau^{2}_{K}&= sw \circ \rho \circ t_{K} \circ sw \circ \rho \circ t_{K} \\
&\simeq sw \circ \rho \circ t_{K} \circ \underline{\rho} \circ sw \circ t_{K} \\
&\simeq sw \circ t_{K}  \circ \overline{\rho}  \circ \underline{\rho} \circ sw \circ t_{K} \\
&\simeq  sw \circ t_{K} \circ sw  \circ t_{K}  \\
&\simeq \textrm{id}.
\end{align*}
Here we have used $\overline{\rho}  \circ \underline{\rho} \simeq \mathrm{id}$, since they are induced by the finger moving isotopy along the same arc but in opposite direction.
\end{proof}

\begin{remark}
Proposition \ref{strong} implies that $\tau_{K}$ is different from the involutive knot Floer action $\iota_{K}$ in the sense that although both are graded, skew-filtered maps, $\tau_{K}$ does not square to the Sarkar map. This is reflected in the Example~\ref{strong_figure_eight}, where as discussed, despite being a thin knot, there are two `different' strong involution actions on the figure-eight knot, compare \cite[Proposition 8.1]{HM}. On the other hand, the action of a periodic involution $\tau_K$ resembles $\iota_K$ in the sense that both are grading preserving maps that square to the Sarkar map, albeit $\tau_K$ is this case in filtered not skew-filtered. In the case of the figure-eight knot, the periodic action $\tau_K$ is different from the $\iota_K$ action \cite[Section 8.2]{HM}, see Figure~\ref{figure_eight_periodic}.
\end{remark}


\section{ Spin rational Homology bordism group of involutions: $\G$}\label{spingroup}

Two rational homology three-spheres $Y_1$ and $Y_2$ are said to be rational homology cobordant if there is a $4$-manifold $W$ such that inclusion $H_{*}(Y_i,\mathbb{Q}) \hookrightarrow H_{*}(W,\mathbb{Q})$ induce an isomorphism. The equivalence classes form a group under connected sum operation, called the rational homology cobordism group, $\Theta^{3}_{\mathbb{Q}}$. This group is well studied in the literature, see \cite{cassonrational, froyshov1996seiberg}. 
A slight variation of this group is the \textit{spin rational homology cobordism group}, $\Theta^{spin}_{\mathbb{Q}}$. The underlying set for this group are pairs $(Y,\s)$, where $Y$ a rational homology three-sphere equipped with a spin-structure $\s$. We identify two such pairs $(Y_1,\s_1)$ and $(Y_2,\s_2)$ if there exist a pair $(W,\mathfrak{t})$, where $W$ is a rational homology cobordism between $Y_1$ and $Y_2$ and $\mathfrak{t}$ is a spin structure on $W$ which restricts to the spin-structures $\s_i$ on the boundary.

In another direction, let $Y$ be a $3$-manifold equipped with a diffeomorphism $\phi$. One can relate any two Such pairs $(Y_{i},\phi_{i})$ if there is a pair $(W,f)$ where $W$ is a cobordism between $Y_1$ and $Y_2$ and $f$ is a diffeomorphism that restricts to the boundary diffeomorphisms $\phi_i$. The equivalence class of such pairs forms an abelian group under the operation of \textit{disjoint union}. This group is usually referred to as the \textit{$3$-dimensional bordism group of diffeomorphisms} $\Delta_{3}$. In \cite{Melvin}, Melvin showed that $\Delta_{3}=0$. This parallels the situation observed in the $3$-dimensional cobordism group. To this end, one can naturally ask to modify this group by putting homological restrictions on the cobordism. This parallels the situation in the traditional cobordism group, where after putting on homological restrictions on the cobordism one obtains the group of $3$-dimensional integer homology cobordism, $\Theta^{3}_{\mathbb{Z}}$ (or the rational homology cobordism group, $\Theta^{3}_{\mathbb{Q}}$), which has a lot more structure.

In \cite[Definition 2.2]{dai2020corks}, the authors generalized $\Delta_{3}$ and $\Theta^{3}_{\mathbb{Z}}$ to a group called the \textit{3-dimensional homology bordism group of involutions}, $\Theta^{\tau}_{\mathbb{Z}}$. Roughly, this was obtained by putting integer homology cobordism type homological restrictions on cobordisms of $\Delta_{3}$. Here analogously, we consider a generalization of $\Delta_3$ by putting homological restrictions similar to that in $\Theta^{spin}_{\mathbb{Q}}$.

\begin{definition}\cite[Definition 2.2]{dai2020corks} \label{spinclass} Let $(Y,\s,\tau)$ be a tuple such that $Y$ is a compact, disjoint union of oriented rational homology 3-spheres; $\tau$ is an involution which fixes each component of $Y$ set-wise and $\s$ is a collection of spin structures on each component of $Y$ each of which is fixed by $\tau$.


Given two such tuples $(Y_1,\tau_1,\s_1)$ and $(Y_2,\tau_2,\s_2)$ we say that they are \textit{pseudo spin $\mathbb{Q}$-homology bordant} if there exist a tuple $(W, \mathfrak{t}, f)$ where $W$ is compact oriented cobordism between $Y_1$ and $Y_2$ with $H_2(W;\mathbb{Q})=0$, $f$ is an orientation preserving diffeomorphsim on $W$ which extends the involutions $\tau_1$ and $\tau
_2$ on the boundary and $\mathfrak{t}$ is a spin structure on $W$ which restrict to $\s_i$ on $Y_i$. Furthermore, we require $f$ to satisfy 

\begin{enumerate}

\item $f$ acts as identity on $H_1(W,\partial W; \mathbb{Q})$;
\item $f$ fixes the spin-structure $\mathfrak{t}$. 

\end{enumerate}

\end{definition}

\noindent Readers may wonder about the motivation behind such a definition. Instead of belaboring the topic, we request interested readers to consult \cite[Section 2]{dai2020corks} for an extensive discussion. With the definition of pseudo spin $\mathbb{Q}$-homology bordism in mind, we define the group $\G$.

\begin{definition}\label{g}
The \textit{3-dimensional spin $\mathbb{Q}$-homology bordism group of involutions} $\G$, is an abelian group where underlying objects are pseudo $\mathbb{Q}$-homology bordism classes of tuples $(Y,\s,\tau)$ endowed with addition operation given by disjoint union. The identity is given by the empty set and the inverse is given by orientation reversal.
\end{definition}

Roughly, the readers may interpret the group $\G$ as the one obtained by decorating the boundaries of the cobordisms in $\Theta^{spin}_{\mathbb{Q}}$ with involutions that extend over the cobordism.
In the following Section~\ref{correction}, we will define and study invariants of this group.


\section{$\tau$-involutive correction terms}\label{correction}

Hendricks and Manolescu \cite{HM} studied the $\spinc$-conjugation action on the Heegaard Floer chain complex $CF^{-}(Y,\s)$, and using this action they also defined a mapping cone complex $CFI^{-}(Y,\s)$. Moreover, they showed that for a self-conjugate $\spinc$-structure $\s$, the quasi-isomorphism type of the chain complex complex $CFI^{-}(Y,\s)$  is an invariant of the $3$-manifold $Y$ \cite[Proposition 2.8]{HM}.
In a similar manner we define  
\begin{definition}
Given $(Y,\s, \tau)$ as in Section~\ref{3manifold}, we define a chain complex $CFI^{-}_{\tau}$ to be the mapping cone of 
\[
 CF^{-}(Y,\s) \xrightarrow{Q(1 + \tau)} Q.CF^{-}(Y,\s),
\] 
where $Q$ is a formal variable of degree $-1$ with $Q^2=0$.
\end{definition}

We refer to the homology of $CFI^{-}_{\tau}$ as $\tau$-\textit{involutive Heegaard Floer homology} $HFI^{-}_{\tau}$. Note that by construction,  $HFI^{-}_{\tau}$ is a module over $\mathbb{Z}_{2}[U,Q]/(Q^2)$. It follows that $HFI^{-}_{\tau}$ is an invariant of the pair $(Y,\s,\tau)$. More precisely, we say two such tuples $(Y_1,\tau_1,\s_1)$ and $(Y_2,\tau_2,\s_2)$ are equivalent to each other if there exist a diffeomorphism
\[
\phi: (Y_1,\s_1) \rightarrow (Y_2, \s_2),
\]
which intertwines with $\tau_1$ and $\tau_2$ up to homotopy. We then have the following Proposition

\begin{proposition}
$HFI^{-}_{\tau}$ is an invariant of the equivalence class of $[(Y,\tau,\s)]$.
\end{proposition}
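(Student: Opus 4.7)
The plan is to lift the hypothesized equivalence $\phi \colon (Y_1,\s_1)\to(Y_2,\s_2)$ to a chain homotopy equivalence between the two mapping cones. First I would invoke naturality of Heegaard Floer homology (Juh\'asz--Thurston--Zemke \cite{JTZ}) to obtain, from $\phi$, a chain homotopy equivalence
\[
\Phi \colon CF^{-}(Y_1,\s_1) \longrightarrow CF^{-}(Y_2,\s_2),
\]
well-defined up to chain homotopy. The intertwining hypothesis $\phi \circ \tau_1 \simeq \tau_2 \circ \phi$ at the level of (pointed) diffeomorphisms, transported through the same naturality machinery used in Section~\ref{3manifold} to define $\tau$, gives a chain homotopy
\[
H \colon CF^{-}(Y_1,\s_1) \longrightarrow CF^{-}(Y_2,\s_2), \qquad \Phi\circ\tau_1 + \tau_2\circ\Phi \; = \; dH + Hd,
\]
working over $\mathbb{F}_2$ as in the rest of the paper.

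Next I would use $(\Phi,H)$ to build a map of mapping cones. Writing the cone complex $CFI^{-}_\tau(Y_i,\s_i)$ as $CF^{-}(Y_i,\s_i)\oplus Q\cdot CF^{-}(Y_i,\s_i)$ with differential $D_i = d + Q(1+\tau_i)$, I would define
\[
\widetilde{\Phi}(x + Qy) \; := \; \Phi(x) + Q\Phi(y) + QH(x).
\]
A direct computation (using that $\Phi$ is a chain map together with the homotopy identity above) shows $D_2\widetilde{\Phi} = \widetilde{\Phi}D_1$, so $\widetilde{\Phi}$ is a $\mathbb{F}_2[U,Q]/(Q^2)$-equivariant chain map. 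Running the same construction for a homotopy inverse of $\Phi$, and using the standard fact that a morphism of mapping cones induced by quasi-isomorphisms of the two corner complexes (that commute up to homotopy with the cone maps) is itself a quasi-isomorphism--equivalently, applying the five lemma to the long exact sequence associated to the cone--shows that $\widetilde{\Phi}$ is a chain homotopy equivalence. Thus $HFI^{-}_\tau(Y_1,\s_1)\cong HFI^{-}_\tau(Y_2,\s_2)$ as modules over $\mathbb{F}_2[U,Q]/(Q^2)$.

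Finally I would check that this isomorphism is independent of the various choices: the choice of representative of $\phi$ in its homotopy class, the choice of the homotopy $H$ witnessing $\Phi\tau_1\simeq\tau_2\Phi$, and the intermediate Heegaard data. Each of these changes alters $\widetilde{\Phi}$ by an explicit chain homotopy on the cone, built term-by-term from the corresponding homotopies on $CF^{-}$ exactly as in \cite[Proposition 2.8]{HM}. The main technical point, which I expect to be the only nontrivial obstacle, is the passage from the topological intertwining $\phi\circ\tau_1\simeq\tau_2\circ\phi$ to a genuine chain homotopy at the Heegaard Floer level; this is handled by tracking the sequence of Heegaard moves and pointed diffeomorphisms that define $\Phi$ and $\tau_i$ and appealing to the canonicity statements of \cite{JTZ} and \cite{HM}, just as in the proof of Proposition~\ref{prop:tau}. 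With this in hand, the invariance of the quasi-isomorphism type of $CFI^{-}_\tau$, and hence of $HFI^{-}_\tau$, follows.
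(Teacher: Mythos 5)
Your proof takes essentially the same route as the paper: establish that $\phi$ induces a chain homotopy equivalence $\Phi$ on $CF^-$ intertwining $\tau_1$ and $\tau_2$ up to a chain homotopy $H$, then promote $(\Phi,H)$ to a quasi-isomorphism of the mapping cones. The paper's proof is much terser — it cites the diffeomorphism invariance of Heegaard Floer cobordism maps \cite[Theorem~A]{Zemkegraph} to produce the homotopy-commuting square and then simply states ``the result follows,'' eliding exactly the routine cone algebra you write out. Your formula $\widetilde\Phi(x+Qy)=\Phi(x)+Q\Phi(y)+QH(x)$ and the five-lemma/LES argument are correct and are precisely what ``the result follows'' is implicitly appealing to. The only small difference is sourcing: you cite JTZ/HM canonicity and describe tracking Heegaard moves to obtain the chain-level intertwining, whereas the paper invokes Zemke's Theorem~A directly, which packages that argument; both are valid ways to land the same square.
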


\begin{proof}
Given two equivalent tuples  $(Y_1,\tau_1,\s_1)$ and $(Y_2,\tau_2,\s_2)$ , the diffeomorphism invariance of $3+1$-dimensional cobordisms \cite[Theorem A]{Zemkegraph} implies that we have the following diagram which commute up to chain homotopy. 
\[\begin{tikzcd}[column sep =large, row sep =large]
CF^{-}(Y_1,\s_1) \arrow{r}{\phi} \arrow{d}{\tau_1} & CF^{-}(Y_2,\s_2) \arrow{d}{\tau_2} \\
CF^{-}(Y_1,\s_1) \arrow{r}{\phi}  & CF^{-}(Y_2,\s_2)
\end{tikzcd}
\]
The result follows.
\end{proof}
\noindent
Hendricks, Manolescu, and Zemke studied \cite{HMZ} two invariants $\underline{d}$ and $\bar{d}$ of the 3-dimensional homology cobordism group $\Theta^{3}_{\mathbb{Z}}$ stemming from the involutive Heegaard Floer homology. By adapting their construction we define two invariants $\underline{d}_{\tau}$ and $\bar{d}_{\tau}$. The definition of these invariants is analogous to that of the involutive Floer counterpart (by replacing $\iota$ with $\tau$). Instead of repeating the definition, we refer the readers to \cite[Lemma 2.12]{HMZ} for a convenient description of the invariants. However, we need to specify the definition of $\underline{d}_{\tau}$ and $\bar{d}_{\tau}$ when the $Y$ is disconnected, as this is not considered in the involutive case. We define the following
\begin{definition}
Let $(Y_1 \sqcup Y_2, \s_1 \sqcup \s_2, \tau_1 \sqcup \tau_2)$ be a tuple so that $Y_i$ is a $\mathbb{Q}HS^{3}$, $\s_i$ is a spin structure on $Y_i$ and $\tau_i$ is an involution on $Y_i$ which fix $\s_i$. We define
\[
\underline{d}_{\tau}(Y_1 \sqcup Y_2, \s_1 \sqcup \s_2, \tau_1 \sqcup \tau_2):= \underline{d}_{\tau}(\otimes CF^{-}(Y_i,\s_i)[-2], \otimes \tau_i).
\]
here the tensor product for $CF^{-}$ is taken over $\mathbb{Z}_{2}[U]$. $\bar{d}_{\tau}$ is defined analogously.
\end{definition}
\noindent
This definition is reminiscent of the equivariant connected sum formula proved in \cite[Proposition 6.8]{dai2020corks}. Recall that in Definition~\ref{spinclass}, we defined pseudo spin $\mathbb{Q}$-homology bordant class. We now show:

\begin{lemma}\cite[Section 6]{dai2020corks}\label{dinvariantproof}
$\underline{d}_{\circ}$ and $\bar{d}_{\circ}$ are invariants of pseudo spin $\mathbb{Q}$-homology bordant class. Where $\circ \in \{ \tau, \iota \tau \}$.
\end{lemma}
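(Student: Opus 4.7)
The plan is to adapt the invariance argument for the involutive correction terms $\underline{d}, \bar{d}$ from Hendricks-Manolescu-Zemke to the equivariant setting, essentially by repackaging the proof of \cite[Section 6]{dai2020corks} in the spin rational-homology category. Given a pseudo spin $\Q$-homology bordism $(W, f, \mathfrak{t})$ between $(Y_1, \tau_1, \s_1)$ and $(Y_2, \tau_2, \s_2)$, I would first consider the cobordism map
\[
F_{W, \mathfrak{t}}: CF^{-}(Y_1, \s_1) \to CF^{-}(Y_2, \s_2).
\]
Since $W$ is a $\Q$-homology cobordism with a spin structure restricting to $\s_i$ on each boundary component, the usual grading formula together with the fact that $\mathfrak{t}$ has trivial Chern class rationally shows that $F_{W,\mathfrak{t}}$ becomes a $\Z_2[U,U^{-1}]$-module isomorphism upon inverting $U$. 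Running the same argument for the reversed cobordism yields a local map in the opposite direction, so $F_{W,\mathfrak{t}}$ already gives a local equivalence at the level of $U$-complexes.

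Next I would upgrade this to a local equivalence of $\tau$-complexes (resp.\ $\iota\tau$-complexes) by checking that $F_{W,\mathfrak{t}}$ commutes up to chain homotopy with the involutive action. For commutation with $\tau$, I would apply Zemke's diffeomorphism invariance of (graph) cobordism maps \cite{Zemkegraph} to the diffeomorphism $f: W \to W$, which restricts to $\tau_i$ on $Y_i$ and preserves $\mathfrak{t}$. The two extra hypotheses in Definition~\ref{spinclass}---that $f$ fixes $\mathfrak{t}$ and acts trivially on $H_1(W, \partial W; \Q)$---are precisely what one needs to ensure that no $\pi_1$ or basepoint corrections obstruct the square
\[
\begin{tikzcd}[column sep=large]
CF^{-}(Y_1,\s_1) \arrow{r}{F_{W,\mathfrak{t}}} \arrow{d}{\tau_1} & CF^{-}(Y_2,\s_2) \arrow{d}{\tau_2} \\
CF^{-}(Y_1,\s_1) \arrow{r}{F_{W,\mathfrak{t}}} & CF^{-}(Y_2,\s_2)
\end{tikzcd}
\]
from commuting up to homotopy, mirroring the proof of Proposition~\ref{localbase}. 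Commutation with $\iota$ is then automatic from the fact that $\mathfrak{t}$ is self-conjugate so $F_{W,\mathfrak{t}} \simeq F_{W,\bar{\mathfrak{t}}}$, and composing yields the $\iota\tau$ commutation.

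Having produced a local equivalence of $\tau$-complexes (and of $\iota\tau$-complexes) in the evident extension of Definition~\ref{localequi}, invariance of $\underline{d}_\tau, \bar{d}_\tau, \underline{d}_{\iota\tau}, \bar{d}_{\iota\tau}$ follows from the standard formal fact that these correction terms, being defined by minima/maxima of absolute gradings of elements detected by the $U$-tower in the mapping cone homology, depend only on the local equivalence class of the underlying complex-with-involution. For a disconnected tuple $(Y_1 \sqcup Y_2, \s_1 \sqcup \s_2, \tau_1 \sqcup \tau_2)$, the definition reduces the invariants to those of the tensor product $CF^{-}(Y_1,\s_1) \otimes_{\Z_2[U]} CF^{-}(Y_2,\s_2)[-2]$ with the tensor product involution, and since local equivalences are preserved under $\otimes_{\Z_2[U]}$, the connected case propagates.

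The main technical obstacle is the second step: verifying at the chain level that $F_{W,\mathfrak{t}}$ commutes up to homotopy with the action $\tau$, which requires carefully organizing Heegaard triples compatible with $f$ and leveraging Zemke's graph cobordism functoriality. The two homological hypotheses on $f$ are crucial here; once the naturality square above is confirmed, the remainder of the argument is purely formal, reducing to the established theory of $\iota$-complexes and their local equivalences.
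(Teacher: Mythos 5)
Your proposal follows the same overall strategy as the paper: use Zemke's graph cobordism maps, invoke diffeomorphism invariance to obtain the commutation square with $\tau$, use the homological hypotheses on $f$ (via triviality of the graph-action maps $A_\gamma$) to show the graph placement does not affect the map, and conclude invariance of the correction terms. However, there is a genuine gap in the step where you assert that $F_{W,\mathfrak{t}}$ ``already gives a local equivalence at the level of $U$-complexes.'' When $Y_1$ or $Y_2$ is disconnected (which Definition~\ref{spinclass} allows), $W$ necessarily has nonzero Euler characteristic contributed by $1$- and $3$-handles, so even with $c_1^2 = \sigma = 0$ the grading shift $\tfrac{c_1^2 - \chi - \sigma}{4}$ is not zero, and the cobordism map is not a grading-preserving local map on the nose. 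The paper resolves this by decomposing $W = W_a \cup W_h \cup W_b$ into a $1$-handle piece, a rational-homology-cobordism piece $W_h$ with $\chi(W_h)=0$, and a $3$-handle piece; it then applies the equivariant connected-sum formula of \cite[Proposition~6.8]{dai2020corks} to the outer pieces and the grading-shift inequality only to $W_h$, turning the cobordism around for the reverse inequality. Your closing remark that local equivalences are preserved under $\otimes_{\Z_2[U]}$ addresses the \emph{definition} of $\underline{d}_\tau,\bar{d}_\tau$ for disconnected tuples but does not supply the missing comparison across the $1$- and $3$-handle cobordisms. To make the proposal complete you would need to add this handle decomposition (or an equivalent verification that the graph-cobordism map composed through all three pieces is a local map in the sense needed), and also give the argument---rather than merely stating the hypothesis---for why triviality of $f_*$ on $H_1(W,\partial W;\Q)$ forces $F_{W,f(\Gamma),\mathfrak{t}} \simeq F_{W,\Gamma,\mathfrak{t}}$, which in the paper goes through $A_\gamma \simeq 0$ and its extension from $\Z HS^3$ to $\Q HS^3$ boundaries.
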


\begin{proof}
The proof is essentially an adaptation of the proof of Theorem~1.2 from \cite{dai2020corks}. We give a brief overview of the proof here for the convenience of the readers. We will only consider the case for $\tau$. The argument for $\iota \tau$ is similar. Let $(Y_1,\tau_1,\s_1)$ and $(Y_2,\tau_2,\s_2)$ be in the same equivalence class. Which implies that there is a pseudo spin $\mathbb{Q}$-homology bordism $(W,f,\s_{\scaleto{W}{3pt}})$ between them. For simplicity we assume that $W$ is connected. Zemke in \cite{Zemkegraph} defined \textit{graph cobordism} maps for Heegaard Floer homology. These maps require an extra information (compared to the cobordism maps defined by Ozsv{\'a}th and Szab{\'o} \cite{OSsmooth4}) in the form of an embedded graph $\Gamma$ in the cobordism $W$ with ends in the boundary of cobordism. \footnote{the ends of $\Gamma$ are the basepoints for $Y_1$ and $Y_2$, which we have chosen to omit from our notation for simplicity.} Using this data Zemeke defined the graph cobordism maps
\[F_{W,\Gamma,\s_{\scaleto{W}{3pt}}}:CF^{\circ}(Y_1,\s_1) \rightarrow CF^{\circ}(Y_1,\s_1).
\] 
The diffeomorphism invariance of graph cobordism maps \cite[Theorem A]{Zemkegraph} then implies 
\[
 F_{W,\Gamma,\s_{\scaleto{W}{3pt}}} \circ \tau_2 \simeq \tau_1 \circ F_{W, f(\Gamma),\s_{\scaleto{W}{3pt}}}.
 \]
In \cite{dai2020corks} it was shown that if $(W,f)$ is a \textit{pseudo-homology bordism} \cite[Definition 2.2]{dai2020corks} between the two boundaries of $W$ (which are integer homology spheres) then $F_{W,f(\Gamma)}$ is $U$-equivariant chain homotopic to $F_{W,\Gamma}$. However the proof is  easily refined for the case in hand. That the boundaries were integer homology spheres was used to imply that for any closed loop $\gamma$, the \textit{graph-action} map satisfy $A_{\gamma} \simeq 0$ \cite[Lemma 5.6]{Zemkegraph}. This continue to hold for rational homology spheres as well, since $A_{\gamma}$ only depend on the class of $[\gamma] \in H_{1}(Y)/\textrm{Tors}$. Hence following Section 6 from \cite{dai2020corks}, we get the following diagram which commutes up to homotopy.  
 \[
 \begin{tikzcd}[column sep =large, row sep =large]
CF^{\circ}(Y_1,\s_1) \arrow[dr, dashrightarrow, "H"] \arrow{r}{F_{W,\Gamma,\s_{\scaleto{W}{3pt}}}} \arrow{d}{\tau_1} & CF^{\circ}(Y_2,\s_2) \arrow{d}{\tau_2} \\
CF^{\circ}(Y_1,\s_1) \arrow{r}{F_{W,\Gamma, \s_{\scaleto{W}{3pt}}}}  & CF^{\circ}(Y_2,\s_2)
\end{tikzcd}
\]
Since $F_{W,\Gamma, \s_{W}}$ induces an isomorphism on $CF^{\infty}$ (see  proof of \cite[Theorem 1.2]{dai2020corks}), the map on the mapping cones  
\[ F^{\tau,\infty}_{W,\Gamma,\s_{\scaleto{W}{3pt}}}: HFI^{\infty}_{\tau}(Y_1,\tau_1,\s_1) \rightarrow HFI^{\infty}_{\tau}(Y_2,\tau_2,\s_2),
\]
induced by the chain homotopy $H$ above is a quasi-isomorphism. Now the maps $F^{\tau,\infty}_{W.\Gamma, \s_{\scaleto{W}{3pt}}}$ admit a grading shift formula. In \cite[Lemma 4.12]{HM}  Hendricks-Manolescu computed similar grading shift formula in the case of cobordism maps in involutive Floer homology. It can be checked that the same grading formula holds for our case as well. 
 
In order to finish our argument, it will be important to consider the construction of the cobordism map $F_{W,\Gamma, \s_{\scaleto{W}{3pt}}}$. Roughly, the cobordism $W$ decomposes into 3 parts as shown in Figure~\ref{cobordism}. The $W_{a}$ and $W_b$ parts consist of attaching one and three handles respectively with the embedded graph as shown in the Figure.
\begin{figure}[h!]
\centering
\includegraphics[scale=.5]{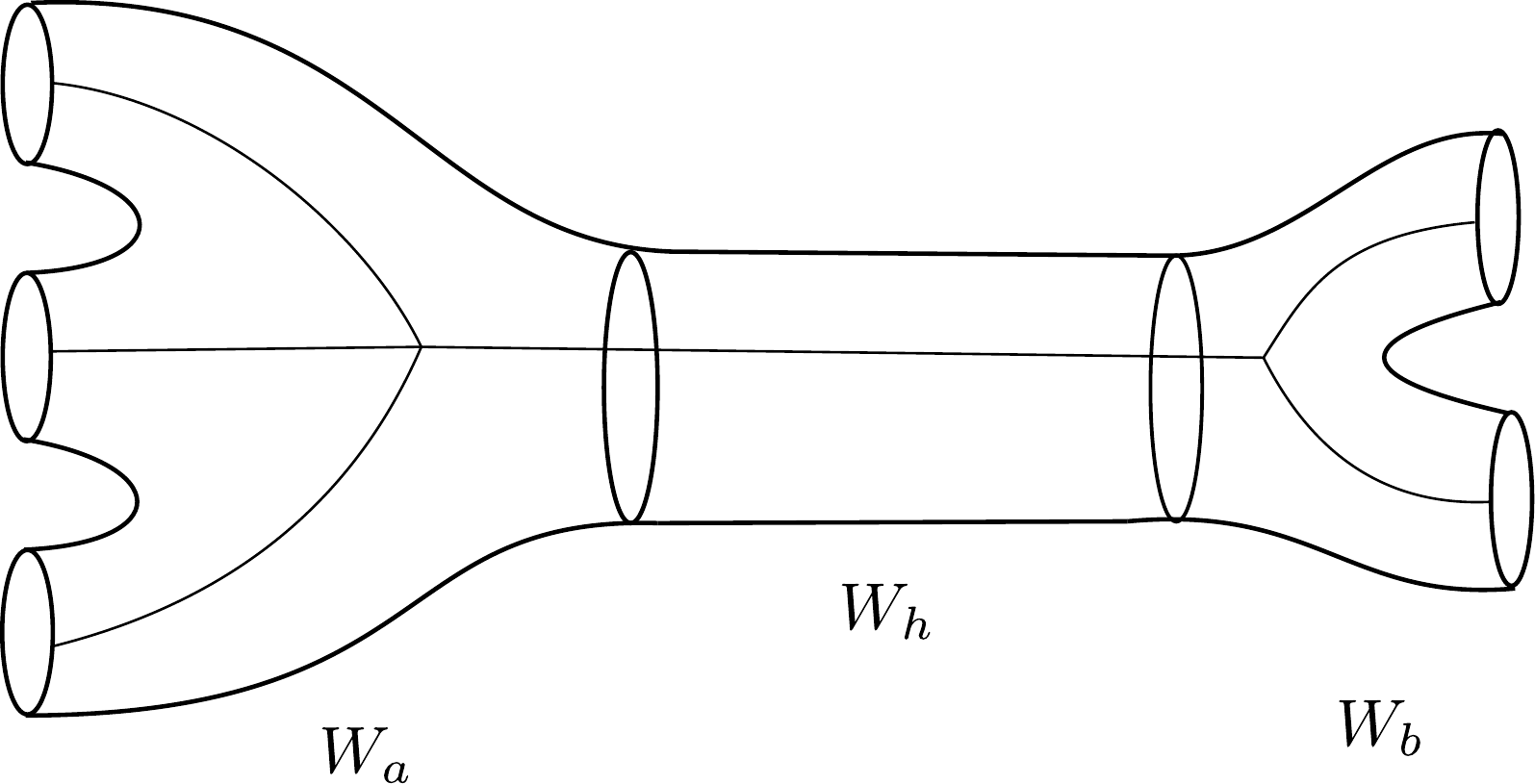} 
\caption{Decomposing $W$ in $3$-parts.}\label{cobordism}
\end{figure}
The part $W_h$ in the middle is then a rational homology cobordism, which is further equipped with a path. As discussed in \cite{dai2020corks} the cobordism maps associated with $W_{a}$ and $W_{b}$ are the ones inducing the connected sum cobordism. Hence the equivariant connected sum formula from \cite[Proposition 6.8]{dai2020corks} imply that the $\underline{d}_{\tau}$ and $\bar{d}_{\tau}$ invariants of the incoming boundary of $W_{a}$ and the outgoing boundary of $W_{a}$ are equal. Similar result holds $W_{b}$. Applying the grading shift formula to $W_h$, we get
\begin{gather*}
\underline{d}_{\tau}(Y_2,\tau_2,\s_2) - \underline{d}_{\tau}(Y_1,\tau_1,\s_1) \geq \frac{c_{1}^{2}(\s_{\scaleto{W|_{{\scaleto{W_{h}}{7pt}}}}{6pt}})-\chi({W_{h}})-\sigma(W_{h})}{4} \\
\overline{d}_{\tau}(Y_2,\tau_2,\s_2) - \overline{d}_{\tau}(Y_1,\tau_1,\s_1) \geq \frac{c_{1}^{2}(\s_{\scaleto{W|_{{\scaleto{W_{h}}{7pt}}}}{6pt}})-\chi({W_{h}})-\sigma(W_{h})}{4}
\end{gather*}
 
\noindent Since $W_{h}$ is a rational homology cobordism, it follows that  $\underline{d}_{\tau}(Y_2,\tau_2,\s_2) \geq \underline{d}_{\tau}(Y_1,\tau_1,\s_1)$ and 
$\overline{d}_{\tau}(Y_2,\tau_2,\s_2) \geq \overline{d}_{\tau}(Y_1,\tau_1,\s_1)$. The conclusion follows by turning the cobordism $W$ around and applying the same argument.

\end{proof}

\begin{proof}[Proof of Theorem~\ref{dinvariant}]
The proof is a follows from Lemma~\ref{dinvariantproof}.
\end{proof}


 \section{Equivariant surgery formula}\label{proofsurgery}
In this section, we prove the surgery formula outlined in Theorem~\ref{t1}. We start with some background on the large surgery formula for the Heegaard Floer homology.
\subsection{Large surgery formula in Heegaard Floer homology}
  
Let $K$ be a knot inside an integer homology sphere, $Y$ and $p > 0$ be an integer. Let $Y_{p}(K)$ denote the $3$-manifold obtained by $p$-surgery on $K$. Let us fix a Heegaard data $\mathcal{H}_{K}=(\Sigma,\alphas,\betas,w,z)$ for $(Y,K,w,z)$,  such that one of the $\beta$-curves among $\beta_1, \beta_2, \cdots \beta_g$, say $\beta_{g}$ (here $g$ is the genus of the Heegaard surface $\Sigma$) represent the meridian of the knot $K$. The basepoints $w$ and $z$ are placed on either side of $\beta_{g}$. Recall from \cite{OSknots}, that given such a Heegaard diagram for the knot, it is possible to construct a Heegaard diagram for the surgered manifold $Y_{p}(K)$. This is done as follows. We introduce a new set of curves $\gamma_1, \gamma_2, \cdots \gamma_g$ on $\Sigma$  such that $\gamma_{i}$ for $i \in \{ 1, \cdots, g-1 \}$ is a small Hamiltonian translate of the corresponding $\beta$ curve $\beta_{i}$. While the curve $\gamma_g$ is obtained by winding a knot longitude in a neighborhood of the curve $\beta_{g}$ in $\Sigma$ a number times so that it represents $p$-surgery. This neighborhood of $\beta_g$ where the winding takes place is usally referred to as the \textit{winding region}. It follows that $\mathcal{H}_{p}:=(\Sigma,\alphas,\gammas,w)$ is a Heegaard diagram for $(Y_{p}(K),w)$. We also recall that the elements of $CFK^{\infty}(\mathcal{H})$ are generated by $[\bold{x},i,j]$ for $i , j \in \mathbb{Z}$ where $\x$ is an intersection point between the $\alpha$ and the $\beta$-curves. We now define a quotient complex $A^{+}_{s}$ of $CFK^{\infty}(\mathcal{H})$ generated by elements $[\bold{x}, i, j]$ such that $i \geq 0, \; j \geq 0$. 
\[ 
A^{+}_{s}(\mathcal{H}):= \{ [\bold{x},i,j] \in CFK^{\infty}(\mathcal{H}) \; \textrm{with} \; i \geq 0, \; j \geq s \}.
\]
We will also use the subcomplex $A^{-}_{0}$. Which is defined as follows
\[
A^{-}_{s}(\mathcal{H}):= \{ [\bold{x},i,j] \in CFK^{\infty}(\mathcal{H}) \; \textrm{with} \; i < 0, \; j < s \}.
\]
The large surgery formula in Heegaard Floer homology is stated as:
\begin{theorem}\cite{OSknots, rasmussenthesis}\label{large}
There exist integer $N \geq 0$ such that for $p \geq N$ and for $s \in \mathbb{Z}$, with $|s| \leq \frac{p}{2}$, there is an isomorphism of relatively graded $\mathbb{Z}_{2}[U]$-module
\[
HF^{+}(Y_{p}(K),[s]) \cong H_{*}(A^{+}_{s}(K)).
\] 
\end{theorem}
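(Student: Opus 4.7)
The plan is to realize both sides of the claimed isomorphism inside a single Heegaard triple and then use a winding/area argument to force the chain map defined by triangle counts to be a bijection on generators with matching differentials. First, starting from the Heegaard data $\mathcal{H}=(\Sigma,\alphas,\betas,w,z)$ in which $\beta_g$ is the meridian of $K$, I would form the Heegaard triple $(\Sigma,\alphas,\betas,\gammas,w,z)$ where $\gamma_i$ for $i<g$ is a small Hamiltonian translate of $\beta_i$ and $\gamma_g$ is obtained by winding a longitude $n$ times around $\beta_g$ inside a winding region $R\subset\Sigma$; choosing the winding so that $\gamma_g$ represents the slope $p$, the Heegaard diagram $(\Sigma,\alphas,\gammas,w)$ describes $Y_p(K)$ and the $(\beta,\gamma)$-diagram describes $\#^{g-1}(S^1\times S^2)$, whose top-graded class I denote $\Theta_{\beta\gamma}$.

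The next step is to identify the intersection points $\Ta\cap\mathbb{T}_\gamma$. A standard winding analysis shows that each $\x\in\Ta\cap\Tb$ produces, inside the winding region, exactly $p$ nearby intersection points $\x^0,\x^1,\dots,\x^{p-1}$ in $\Ta\cap\mathbb{T}_\gamma$, and that two such points sit in the same $\spinc$-equivalence class on $Y_p(K)$ (under the standard $\mathbb{Z}/p\mathbb{Z}$ identification) precisely when their superscripts agree with the Alexander grading of $\x$ modulo $p$. This gives, for each $s$ with $|s|\le p/2$, a canonical bijection between the generators of $CF^+(Y_p(K),[s])$ and triples $[\x,i,j]$ with $j-i=s$ and $i,j\ge 0$, i.e. with the generators of $A^+_s(\mathcal{H})$. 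I would then define
\[
\Gamma^+_{p,s}\colon CF^+(Y_p(K),[s])\longrightarrow A^+_s(K),\qquad \Gamma^+_{p,s}(\mathbf{y})=\sum_{\x,\psi}\#\widehat{\mathcal{M}}(\psi)\cdot U^{n_w(\psi)}\cdot[\x,\cdot,\cdot],
\]
the sum being over Whitney triangles $\psi$ with vertices $\mathbf{y}$, $\Theta_{\beta\gamma}$, $\x$ of Maslov index $0$, and the output triple being recorded using $n_w$ and $n_z$ to recover the filtration.

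The core of the argument is then an area/energy estimate exploiting that the winding number may be taken arbitrarily large: one places an area form on $\Sigma$ so that the regions in $R$ that must be crossed by any triangle with output $[\x,i,j]$ not of the predicted bijective form, or by any holomorphic disk in $(\Sigma,\alphas,\gammas)$ not corresponding to a disk in $(\Sigma,\alphas,\betas)$ respecting $(n_w,n_z)$, have arbitrarily large area as $p\to\infty$. By Gromov compactness, for $p\ge N=N(K)$ depending only on the knot (in fact one can take $N$ comparable to $2g(K)$, which is the range where $|s|\le p/2$ still detects the interesting filtrations), the only rigid triangles are the small ones supported near the winding region and the only differentials counted are those coming from holomorphic disks in $\mathcal{H}$ with $n_w,n_z\ge 0$ respecting the quotient defining $A^+_s$. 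Combining the two identifications gives a chain map that is the identity on generators under the bijection above, hence a chain isomorphism.

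The hard part is making the winding/area argument quantitative, that is, pinning down $N$ explicitly in terms of $g(K)$ and checking that the small triangles counted by $\Gamma^+_{p,s}$ really recover the identity under the generator bijection; this is the content of the Ozsv\'ath--Szab\'o/Rasmussen neck-stretching and winding lemmas. A secondary technical point is verifying $\spinc$-admissibility of the triple so that the triangle counts are finite, and tracking the relative Maslov grading shift in order to upgrade the isomorphism to a relatively graded one of $\mathbb{Z}_2[U]$-modules; both are handled by a direct Euler-characteristic computation with the standard formula for $\mathfrak{gr}(\psi)$. Once those pieces are in place, Theorem~\ref{large} follows.
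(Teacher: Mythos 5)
Your proposal follows essentially the same route as the paper, which itself only cites Ozsv\'ath--Szab\'o and Rasmussen and sketches their argument: form the Heegaard triple with $\gamma_g$ wound in the winding region, define $\Gamma^{+}_{p,s}$ by counting index-zero holomorphic triangles against the top class $\Theta$, and conclude via the small-triangle/area-filtration argument that the map is a chain isomorphism for $p$ large (the paper even notes $N \geq g(K)+|s|$ suffices, matching your estimate). The only caveat is cosmetic: rather than Gromov compactness excluding all large triangles outright, the standard argument shows $\Gamma^{+}_{p,s}$ equals the small-triangle bijection plus strictly area-filtration-lowering terms, which is the filtration argument you defer to the cited winding lemmas anyway.
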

\noindent
In fact, one can show that the stated isomorphism holds for   $N \geq g(K) + |s|$, see for example \cite{OSknots}, \cite[Proposition 6.9]{HM}. We will only be interested in the $[0]$-$\spinc$ structure in this article.

The isomorphism in the proof of Theorem~\ref{large} is given as follows. Recall that we obtain $Y_{p}(K)$ from $Y$ by attaching a $2$-handle, hence we get an associated cobordism $W_{p}(K)$ from $Y$ to $Y_{p}(K)$. Let $W^{\prime}_{p}(K)$ denote the result of turning the reverse cobordism $-W_{p}(K)$ around, resulting in  a cobordism from $Y_{p}(K)$ to $Y$. Ozsv{\'a}th and Szab{\'o} then use $W^{\prime}_{p}(K)$ to define a triangle counting map 
\[
\Gamma^{\infty}_{p,0}([\bold{x},i])= \sum\limits_{\bold{y} \in \mathbb{T}_{\alpha} \cap \mathbb{T}_{\beta}} {\sum\limits_{\substack{\psi \in \pi_{2}(\bold{x},\Theta, \bold{y}) \\ \mu (\psi)=0 ,  n_{z}(\psi)=n_{w}(\psi) }} } \# \mathcal{M}(\psi) \cdot [\bold{y}, i- n_{w}(\psi), i- n_{z}(\psi)].
\]
Here $\Theta \in \mathbb{T}_{\beta} \cap \mathbb{T}_{\gamma}$ represents the top degree generator in homology of $HF^{-}(\#^{g} S^{1} \times S^{2},\s_{0})$ where $\s_{0}$ is the torsion $\spinc$-structure. The map $\Gamma^{\infty}$ sends the $CF^{-}$-subcomplex to the $A^{-}_{0}$-subcomplex, hence induces a map on the quotient complexes 
\[
\Gamma^{+}_{p,0}: CF^{+}(\mathcal{H}_{p}) \rightarrow A^{+}_{0}(\mathcal{H}).
\]
It turns out that $\Gamma^{+}_{p,0}$ is chain isomorphism if $p$ is large enough. The main idea of the proof is to show that there is a small triangle counting map that induces an isomorphism between the two complexes (though it is not necessarily a chain map) and to use an area filtration argument to conclude $\Gamma^{+}_{p,0}$ is indeed a chain isomorphism. 

In \cite{HM}, Hendricks and Manolescu showed that the $\spinc$-conjugation map $\iota$ acting on $CF^{+}(S^{3}_{p}(K),[0])$ is related to the analogous $\spinc$-conjugation map $\iota_K$ acting on $CFK^{\infty}(K)$ in the following sense. We briefly discuss the result below. Recall that $HFI^{+}(Y,[0])$ is the mapping cone complex 
\[
CF^{+}(Y,\s) \xrightarrow{Q(\mathrm{id} + \iota)}  Q.CF^{+}(Y,\s)[-1].
\]
Here $Q$ is a formal variable such that $Q^{2}=0$ and $[-1]$ denotes a shift in grading. Similarly, let $AI^{+}_{0}$ represent the mapping cone chain complex of the map 
 \[
 A^{+}_{0} \xrightarrow{Q(\mathrm{id} + \iota_K)} Q.A^{+}_{0}[-1].
\] 
Here, by abuse of notation, we use $\iota_K$ to represent the induced action of $\iota_K$ on the quotient complex $A^{+}_{0}$. Hendricks and Manolescu showed that:

\begin{theorem}\cite[Theorem 1.5]{HM}
Let $K$ be a knot in $S^{3}$, then for all $p\geq g(K)$ then there is  a relatively $\mathbb{Z}_{2}[U,Q]/(Q^{2})$-graded isomorphism
\begin{center}
$HFI^{+}(S^{3}_{p}(K),[0]) \cong H_{*}(AI_{{0}}^{+}).$
\end{center}
\end{theorem}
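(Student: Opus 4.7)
The plan is to upgrade the Ozsv\'ath--Szab\'o large-surgery isomorphism $\Gamma^{+}_{p,0}$ to an $\iota$-equivariant chain homotopy equivalence, and then observe that chain homotopy equivalences of $\iota$-complexes induce quasi-isomorphisms of the associated mapping cones. First I would fix a doubly-based Heegaard diagram $\mathcal{H}_K=(\Sigma,\alphas,\betas,w,z)$ for $(S^3,K,w,z)$ in which $\beta_g$ is the meridian of $K$ separating $w$ and $z$, and form the surgery diagram $\mathcal{H}_p=(\Sigma,\alphas,\gammas,w)$ by winding a longitude $p$ times around $\beta_g$ inside a chosen winding region. For $p\geq g(K)$, the theorem of Ozsv\'ath--Szab\'o provides the chain isomorphism $\Gamma^{+}_{p,0}:CF^{+}(\mathcal{H}_p,[0])\to A^{+}_0(\mathcal{H}_K)$, defined by counting holomorphic triangles with vertex at the top-degree generator $\Theta\in\mathbb{T}_\beta\cap\mathbb{T}_\gamma$ in the torsion $\spinc$-structure.

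Second, I would set up the $\iota$-actions at the diagram level. By definition $\iota$ is the composition of the tautological isomorphism induced by passing to the conjugate diagram (reversing the orientation of $\Sigma$ and swapping the roles of the $\alpha$- and $\gamma$-curves) with the chain homotopy equivalence back to $\mathcal{H}_p$ produced by a sequence of Heegaard moves. The analogous composition, now swapping $\alpha$ with $\beta$ and interchanging $(w,z)$, defines $\iota_K$ on $CFK^\infty(K)$, which descends to an action on the quotient complex $A^{+}_0(K)$.

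Third --- and this is the heart of the proof --- I would establish the commutative diagram, up to chain homotopy,
\[
\begin{tikzcd}[row sep=large]
CF^{+}(S^3_p(K),[0]) \arrow{r}{\iota} \arrow{d}{\Gamma^{+}_{p,0}} & CF^{+}(S^3_p(K),[0]) \arrow{d}{\Gamma^{+}_{p,0}} \\
A^{+}_0(K) \arrow{r}{\iota_K} & A^{+}_0(K)
\end{tikzcd}
\]
The strategy is to compare the triangle-counting maps for the original Heegaard triple $(\Sigma,\alphas,\betas,\gammas)$ with those for its conjugate $(\Sigma^{\mathrm{op}},\gammas,\betas,\alphas)$, and to use naturality of these maps under the Heegaard moves that define $\iota$ and $\iota_K$. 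Within the winding region one arranges the longitudinal translate and the associated winding to be symmetric under the orientation reversal, so that the conjugate surgery diagram differs from $\mathcal{H}_p$ only by a sequence of handleslides and isotopies whose associated triangle map matches (under $\Gamma^{+}_{p,0}$) the restriction of the knot-level move map to the quotient $A^{+}_0$. The discrepancy, if any, is a chain homotopy realized by a count of holomorphic quadrilaterals.

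Fourth, once the intertwining square is in place, the mapping cone construction is functorial: the induced map
\[
\mathrm{Cone}\bigl(Q(\mathrm{id}+\iota)\bigr) \longrightarrow \mathrm{Cone}\bigl(Q(\mathrm{id}+\iota_K)\bigr),
\]
that is, $CFI^{+}(S^3_p(K),[0])\to AI^{+}_0$, is a chain homotopy equivalence; passing to homology and tracking that all maps are $\mathbb{Z}_2[U,Q]/(Q^2)$-equivariant and preserve the relative grading yields the claimed isomorphism. The main obstacle is the third step: one must verify, inside the winding region, that the Heegaard moves producing $\iota$ at the surgery level are compatible (after applying $\Gamma^{+}_{p,0}$) with those producing $\iota_K$ at the knot level, and that conjugation sends the $[0]$-$\spinc$ structure on $S^3_p(K)$ to itself --- so that $\Gamma^{+}_{p,0}\circ\iota$ lands in $A^{+}_0$ rather than some $A^{+}_s$ with $s\neq 0$. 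This self-conjugacy, combined with large enough $p$ to ensure the Ozsv\'ath--Szab\'o isomorphism holds on the nose, is what makes the square close up to a manageable chain homotopy.
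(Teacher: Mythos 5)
Your proposal is correct and follows essentially the same route as the proof of this theorem, which the paper itself does not reprove but cites from Hendricks--Manolescu: one shows $\Gamma^{+}_{p,0}$ intertwines $\iota$ with $\iota_K$ up to a chain homotopy obtained from naturality of the triangle maps under the Heegaard moves defining the conjugation actions (with holomorphic quadrilateral counts supplying the homotopies), and then passes to the mapping cones --- the same scheme the paper adapts in Section~5 for its $\tau$-equivariant analog. The only technical wrinkle your sketch glosses over is the step in the Hendricks--Manolescu argument where the Heegaard triple switches from right-subordinate to left-subordinate under conjugation, which needs an extra homotopy beyond the routine naturality you invoke (the paper explicitly notes this step is absent in its $\tau$-equivariant setting).
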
 
\noindent
The underlying idea for their proof was to construct a sequence of Heegaard moves relating the Heegaard data for the surgered manifold to the $\spinc$-conjugated Heegaard data so that the sequence simultaneously induces a sequence relating the Heegaard data of the knot with the $\spinc$-conjugated Heegaard data for the knot. 

In these subsequent subsections, we prove an analogous formula for equivariant surgeries on symmetric knots. While we essentially follow the same underlying philosophy of constructing a sequence of Heegaard moves relating two Heegaard triples, our proof of equivariant surgery formula defers from the involutive one. We begin by discussing the topology of equivariant surgeries.

Recall that a knot $K$ with either a strong or a periodic involution, induce an involution on the 3-manifold $Y_{p}(K)$ obtained by doing $p$-surgery on it, where $p$ is any integer\footnote{Infact, this is true for any rational number.} (see \cite{Montesinos1975}, \cite[Section 5]{dai2020corks}). Moreover, by examining the extension of the involution to the surgery, we see that it fixes the orientation of the meridian (of the surgery solid torus) for a periodic involution and reverses the orientation for a strong involution (see \cite[Lemma 5.2]{dai2020corks}). Hence, after identifying the $\spinc$-structures on $Y_{p}(K)$ with $\mathbb{Z}/ p\mathbb{Z}$, we get that the $\spinc$ structure corresponding to $[0] \in \mathbb{Z}/p \mathbb{Z}$ is fixed by the involution regardless of the type of symmetry (strong or periodic) on the knot $K$.

 \subsection{Large surgery formula for Periodic Knots}\label{periodic}
We would now like to prove an analog of the large surgery formula in the context of periodic knots. Let us recall that given a doubly-based periodic knot $(K, \tau, w, z)$ there is an induced involution on the knot Floer homology as defined in Subsection~\ref{action_knot},
\[
\tau_{K}: CFK^{\infty}(K) \rightarrow CFK^{\infty}(K).
\]
Furthermore, we note that since this map is filtered, it maps the subcomplex $A^{-}_{0}(K)$ to itself, and hence induces an involution on the quotient complex, 
\[
\tau_{K}: A^{+}_{0}(K) \rightarrow A^{+}_{0}(K).
\]
We then define $AI_{0}^{+,\tau}(K)$ to be the mapping cone of the following map,
\[
Q(\textrm{id} + \tau_{K}): A^{+}_{0}(K) \rightarrow Q.A^{+}_{0}(K)[-1].
\]
On the other hand, from Subsection~\ref{3manifold}, we get the action of the induced involution (from the periodic symmetry on the knot) on the surgered manifold $S^{3}_{p}(K)$
\[
\tau: CF^{+}(S^{3}_{p}(K), [0]) \rightarrow CF^{+}(S^{3}_{p}(K), [0]).
\]
Hence we can define the invariant $HFI^{+}_{\tau}(S^{3}_{p}(K),[0])$ as in Section~\ref{correction}. We now prove the equivariant surgery formula for periodic knots.
\begin{lemma}\label{periodicproof}
Let $K \subset S^{3}$ be a periodic knot with periodic involution $\tau$. Then for all $p\geq g(K)$ as a relatively graded $\mathbb{Z}_{2}[Q,U]/(Q^{2})$-modules, we the following isomorphism
\begin{center}
$HFI^{+}_{\tau}(S^{3}_{p}(K),[0]) \cong H_{*}(AI_{0}^{+,\tau})$.
\end{center}
\end{lemma}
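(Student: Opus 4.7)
The plan is to upgrade the homological isomorphism of Theorem~\ref{large} to a $\tau$-equivariant statement by showing that the large surgery map $\Gamma^{+}_{p,0}$ intertwines the involution $\tau$ on $CF^{+}(S^{3}_{p}(K),[0])$ with the induced $\tau_{K}$-action on $A^{+}_{0}(K)$ up to chain homotopy; this is precisely the periodic case of Theorem~\ref{t1}. Once this intertwining is in hand, the mapping cone construction is functorial up to homotopy in $\tau$, so the induced map on the twisted complexes yields a quasi-isomorphism $CFI^{+}_{\tau}(S^{3}_{p}(K),[0]) \simeq AI^{+,\tau}_{0}(K)$, which passes to homology to give the stated $\mathbb{Z}_{2}[Q,U]/(Q^{2})$-equivariant isomorphism.

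To establish the chain-level commutation, start with a doubly-based Heegaard diagram $\mathcal{H}_{K}=(\Sigma,\alphas,\betas,w,z)$ for $(S^{3},K,w,z)$ in which one $\beta$-curve, say $\beta_{g}$, is the meridian of $K$, and apply the standard winding procedure of \cite{OSknots} to produce $\mathcal{H}_{p}=(\Sigma,\alphas,\gammas,w)$ for $S^{3}_{p}(K)$. Both $\tau$ on $CF^{+}(\mathcal{H}_{p},[0])$ and $\tau_{K}$ on $A^{+}_{0}(\mathcal{H}_{K})$ decompose into three pieces: the tautological push-forward $t$ (resp. $t_{K}$) by $\tau$, a basepoint-moving map $\rho$ taking the translated basepoints back to their original position along $K$ (using that a periodic involution preserves the orientation of $K$), and a canonical chain homotopy equivalence $\Phi$ induced by a sequence of Heegaard moves relating $\rho\tau\mathcal{H}$ to the original data, supplied by \cite{JTZ}. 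One then checks, piece by piece, that $\Gamma^{+}_{p,0}$ commutes with each of $t$, $\rho$, and $\Phi$ up to homotopy: the push-forward commutation is tautological because a diffeomorphism of the Heegaard surface preserves the triangle moduli; the basepoint-moving commutation follows from choosing the finger-move isotopy on $S^{3}$ to lie along an arc of $K$ disjoint from the winding region, so that the isotopy lifts to a compatible finger move on the surgered manifold; and the $\Phi$-commutation follows from the standard naturality of the triangle-counting map under handleslides, stabilizations, and isotopies in the complement of the basepoint together with the compatibility of small triangle maps with change of Heegaard data.

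The main obstacle is that the involution $\tau$ generically moves the meridian $\beta_{g}$, the winding curve $\gamma_{g}$, and the winding region itself, so that $\tau\mathcal{H}_{p}$ is not literally equal to $\mathcal{H}_{p}$ even after the basepoint move; one must therefore exhibit a sequence of Heegaard moves on the full triple $(\Sigma,\alphas,\betas,\gammas)$ which simultaneously realizes the sequence used to define $\tau_{K}$ on the $\alpha\beta$-diagram and induces $\tau$ on the $\alpha\gamma$-diagram. This is achieved by performing all handleslides and isotopies in the complement of a neighborhood of $\beta_{g}$ first, so that they are identical on both diagrams, and then handling the remaining discrepancy in the winding region by an area-filtration argument analogous to the one used in the proof of Theorem~\ref{large}: the difference between the two sides is filtered below, and hence becomes null-homotopic after passing to the quotient defining $A^{+}_{0}$. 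Assembling these commutations gives the required chain homotopy square for Theorem~\ref{t1} in the periodic case, and taking mapping cones of $Q(\mathrm{id}+\tau)$ and $Q(\mathrm{id}+\tau_{K})$ completes the proof of Lemma~\ref{periodicproof}.
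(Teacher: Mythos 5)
Your overall strategy is correct and matches the paper's: show that the large surgery isomorphism $\Gamma^{+}_{p,0}$ intertwines $\tau$ with $\tau_K$ up to homotopy, then take mapping cones. The decomposition of both actions into a push-forward $t$, a basepoint-moving map $\rho$, and a Heegaard-moves equivalence $\Phi$ is also the right shape. However, there is a genuine gap in how you handle what you yourself correctly identify as ``the main obstacle,'' namely that $\tau$ moves the winding data.

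You propose to resolve this by doing all moves away from a neighborhood of $\beta_g$ first and then disposing of ``the remaining discrepancy in the winding region by an area-filtration argument analogous to the one used in the proof of Theorem~\ref{large}.'' This does not work as stated. The area-filtration argument in \cite{OSknots} is used to show that $\Gamma^{+}_{p,0}$ is an isomorphism, by comparing to a small-triangle map; it is not a mechanism for showing that two different triangle-counting maps become chain homotopic after passing to $A^{+}_{0}$. There is no reason for the discrepancy you produce to be ``filtered below'' in any sense that kills it in the quotient, and no such argument appears in the paper. What the paper actually does is considerably more structural. It first constructs an equivariant Heegaard diagram for $(K,\tau)$ following the knot-projection construction of \cite{OSalternating}, chosen so that $\tau\Sigma = \Sigma$ and so that there is an \emph{equivariant longitude} $\gamma_0 \subset \Sigma$ fixed by $\tau$. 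It then performs Dehn twists in the winding region so that the surgery curve $\gamma_g$ becomes $\gamma_0$ (at the cost of winding $\alpha_g$), after which $\gamma_g$ is literally invariant under the relevant maps. The Heegaard moves relating $\H_K$ to $\tau\bar{\rho}\H_K$ (which, by \cite{OSknots}, avoid handleslides of $\beta_g$ over others) are then transported to the $\gamma$-curves for $i \neq g$ while leaving $\gamma_g$ untouched, yielding a sequence of moves on the whole triple. The commutation at each step is then established by counting holomorphic quadrilaterals in homotopy classes with $n_z = n_w$, as in the associativity argument for triangle maps; no area-filtration trick is needed or used. You also do not mention the equivariant Heegaard diagram or the equivariant longitude at all, and without that setup the claim that ``a diffeomorphism of the Heegaard surface preserves the triangle moduli'' has no content, since $\tau$ need not restrict to the Heegaard surface of an arbitrary diagram. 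Finally, a smaller technical point: the paper finds it necessary to rewrite $\tau_K$ as $\rho \circ t_K \circ \Phi_1$ (applying the Heegaard-moves equivalence first) rather than $\Phi \circ \rho \circ t_K$; this reordering is what allows the three squares to be isolated and proven to commute tautologically or by quadrilateral counting. Your outline keeps the original order, which would make the piece-by-piece commutation harder to carry out.
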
 

\begin{proof}
 It is enough to show that the following diagram commutes up to chain homotopy $R^{+}_{0}$. Where ${CF}^{+}(S^{3}_{p}(K),[0])$ and $A^{+}_{0}$ are computed from a certain Heegaard diagram and $\Gamma^{+}_{p,0}$ is the chain isomorphism defined in the large surgery formula. 
   \[
 \begin{tikzcd}[column sep =large, row sep =large]
{{CF}^{+}(S^{3}_{p}(K),[0])} \arrow[dr, dashrightarrow, "{R^{+}_{p}}"] \arrow{r}{Q(\textrm{id} + \tau)} \arrow{d}{\Gamma^{+}_{p,0}} & {{CF}^{+}(S^{3}_{p}(K),[0])} \arrow{d}{\Gamma^{+}_{p,0}} \\
{A^{+}_{0}} \arrow{r}{Q(\textrm{id} + \tau_{K})}  &{A^{+}_{0}}
\end{tikzcd}
\]
\noindent 
The chain homotopy $R^{+}_{0}$ will then induce a chain map between the respective mapping cone complexes. Since the vertical maps are chain isomorphisms when $p$ is large, the induced map on mapping cones is a quasi-isomorphism by standard homological algebra arguments (for example, see \cite[Lemma 2.1]{integersurgeryos}).

Hence it suffices to construct a homotopy $R^{+}_{p}$ and the rest of the proof is devoted to showing the existence of such a homotopy. Firstly, we begin by constructing a specific Heegaard diagram suitable for our argument.

By conjugating $\tau$ if necessary, we can assume that $\tau$ is given by rotating $180$-degrees about an axis, \cite{waldhauseninvolutionen}. Following \cite{OSalternating}, we now construct an equivariant Heegaard diagram for the knot $K$. Readers familiar with the Heegaard diagram construction from \cite{OSalternating} may skip the following construction. We assign an oriented $4$-valent graph $G$ to the knot projection, whose vertices correspond to double points of the projection and the edges correspond to parts of the knots between the crossings. To each vertex, we assign a 4-punctured $S^{2}$ and decorate it by a $\beta$ curve and $4$ `arcs' of the $\alpha$ curves as in Figure~\ref{tube}.

\begin{figure}[h!]
\centering
\includegraphics[scale=.7]{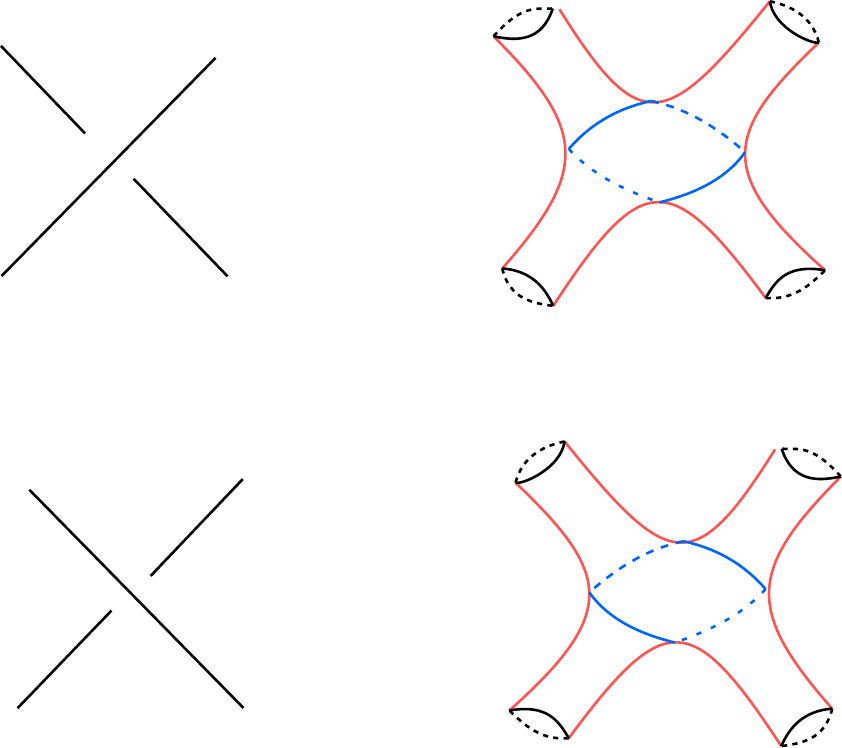} 

\caption
{Decoration of the punctured-$S^{2}$ with $\alpha$-curves in red and $\beta$-curves in blue.}\label{tube}
\end{figure}
\noindent
Next, we glue in tubes along the punctures in a manner that is compatible with the knot diagram. Moreover, we extend the arcs of the $\alpha$ curves to the tubes compatibly. Note that, there is always an outermost part of the graph that is homeomorphic to $S^{1}$, which occurs as the boundary of the unbounded region of the plane. We call this curve $c$. For all the vertices that appear in $c$, there are two edges going out of them, which are part of $c$. We omit drawing the $\alpha$-curves for those edges in the corresponding punctured spheres. Finally, we add a $\beta$-curve indexing it $\beta_{g}$, as the meridian of the tube corresponding to $c$. We then place the two basepoints $z$ and $w$ in either side of it $\beta_g$ so that joining $w$ to $z$ by a small arc in the tube corresponding to $c$, we recover the orientation of the corresponding edge. By \cite{OSalternating}, the resulting Heegaard diagram $\mathcal{H}_K :=(\Sigma, \alphas, \betas, w,z)$  is a doubly pointed Heegaard diagram for the knot $(K, w, z)$. We can visualize the knot lying in the surface by joining $w$ to $z$ in the complement of the $\alpha$-curves and $z$ to $w$ in the complement of the $\beta$-curves. Moreover note that, due to our construction $\tau \Sigma=\Sigma$. See Figure~\ref{trefoil_periodic} for an illustration of this construction. 

We now add the $\gamma$ curves to $\Sigma$. We let $\gamma_{i}$ be a small Hamiltonian isotopy of $\beta_{i}$ for $i \neq g $. We let $\gamma_{g}$ to be knot longitude that lies on $\Sigma$, except we wind it around $\beta_{g}$ in the winding region so that it represents the $p$-surgery framing.  Figure~\ref{winding} represent the winding region. We let $\mathcal{H}_{p} :=(\Sigma, \alphas, \gammas, w)$ represent the based surgered manifold $(S^{3}_{p}(K),w)$ and denote the resulting Heegaard triple by $\H_{\Delta}:= (\Sigma, \alphas, \betas, \gammas, w,z)$. 
\begin{figure}[h!]
\centering
\includegraphics[scale=.9]{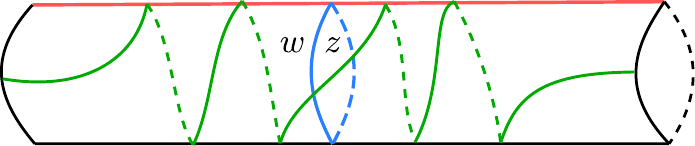} 

\caption
{The winding region.}\label{winding}
\end{figure}

\noindent Before moving on, let us recall the action of $\tau$ on $CFK(K)$ and on $CF(S^{3}_{p}(K),[0])$. Let $\rho$ represent the diffeomorphism induced by a finger moving isotopy along an arc of the knot which sends the pair $(\tau w, \tau z)$ to $(w,z)$. By abusing notation, let us denote the map in the chain complex induced by taking the push forward of the Heegaard data $\H_{K}$ (and $\H_{p}$) by the above isotopy as $\rho$. As shown in Section~\ref{involutionaction}, the action $\tau_K$ on $CFK(K)$ is defined as, 
\[
\tau_K= \Phi(\rho \tau\H_K, \H_K) \circ \rho \circ t_K,
\]
and the action of $\tau$ on $CF(S^{3}_p(K),[0])$ is defined as,
\[
\tau= \Phi^{\prime}(\rho \tau\H_p, \H_p) \circ \rho \circ t.
\] 
For the purpose of the argument, we find it useful to use an alternative description of these actions. Note that there is a finger moving map $\bar{\rho}$ obtained by performing finger moves along the same path defining $\rho$, but in reverse, i.e. taking $(w,z)$ to $(\tau w, \tau z)$. We claim that $\tau_K$ can be alternatively represented as the following composition,
\[
CFK(\mathcal{H}_{K}) \xrightarrow{\Phi_1} CFK(\tau \bar{\rho} \mathcal{H}_{K}) \xrightarrow{t_K} CFK(\bar{\rho} \mathcal{H}_{K}) \xrightarrow{\rho} CFK(\mathcal{H}_{K}).
\]
Here, $\Phi_{1}: CFK(\H_K) \rightarrow CFK(\tau \bar{\rho} \H_K)$ is a chain homotopy equivalence induced by Heegaard moves relating $\H_K$ and $\tau \bar{\rho} \H_K$ as both diagrams represent the same doubly-based knot $(K,w,z)$. The commutation (up to chain homotopy) of the diagram below  shows that the alternative definition of $\tau_K$ is chain homotopic to the original one.
\[
\begin{tikzcd}
	{CFK(\mathcal{H}_{K})} && {CFK(\tau \mathcal{H}_{K})} && {CFK( \rho \tau \mathcal{H}_{K})} \\
	\\
	{CFK(\tau \bar{\rho} \mathcal{H}_{K})} && {CFK(\bar{\rho} \mathcal{H}_{K})} && {CFK(\mathcal{H}_{K})}
	\arrow["\rho", from=1-3, to=1-5]
	\arrow["{\Phi_{2}}"', from=1-3, to=3-3]
	\arrow["\Phi", from=1-5, to=3-5]
	\arrow["\rho"', from=3-3, to=3-5]
	\arrow["{t_K}", from=1-1, to=1-3]
	\arrow["{\Phi_{1}}"', from=1-1, to=3-1]
	\arrow["{t_K}"', from=3-1, to=3-3]
\end{tikzcd}\]

\noindent The map $\Phi_{2}: CFK(\tau \H_K) \rightarrow CFK(\bar{\rho} \H_K)$  represents the chain homotopy equivalence induced by conjugating the Heegaard moves via $t_K$. In a similar manner, let $\Phi_{3}: CFK(\rho \tau \H_K) \rightarrow CFK(\H_K)$ denote the chain homotopy equivalence induced by conjugating the Heegaard moves involved in $\Phi_{2}$ by $\rho$. Since both $\Phi_3$ and $\Phi$ are maps induced by Heegaard moves between two Heegaard diagrams, it follows from \cite[Proposition 6.1]{HM} that $\Phi_{3}$ is chain homotopic to $\Phi$. Hence, the square on the right-hand side also commutes up to chain homotopy. We also consider the similar description of the $\tau$ action on $\H_p$, as $\tau=  \rho  \circ t \circ \Phi^{\prime}_{1}$, where $\Phi^{\prime}_{1}$ is obtained by commuting the $\Phi^{\prime}$ past $t$ and $\rho$.

We now observe that in order to prove the Lemma, it suffices to show that all the three squares below commute up to chain homotopy.
\[
\begin{tikzcd}
	{CF^{+}(\mathcal{H}_p,[0])} && {CF^{+}(\tau \bar{\rho}\mathcal{H}_p,[0])} && {CF^{+}(\bar{\rho}\mathcal{H}_p,[0])} && {CF^{+}(\mathcal{H}_p,[0])} \\
	\\
	{A^{+}_{0}(\mathcal{H}_K)} && {A^{+}_{0}(\tau \bar{\rho}\mathcal{H}_K)} && {A^{+}_{0}(\bar{\rho}\mathcal{H}_K)} && {A^{+}_{0}(\mathcal{H}_K)}
	\arrow["{\Phi^{'}_1}", from=1-1, to=1-3]
	\arrow["{\Gamma^{+}_{(w,z)}}"', from=1-1, to=3-1]
	\arrow["{\Phi_1}"', from=3-1, to=3-3]
	\arrow["t", from=1-3, to=1-5]
	\arrow["{t_K}"', from=3-3, to=3-5]
	\arrow["{\Gamma^{+}_{(\tau w,\tau z)}}", from=1-5, to=3-5]
	\arrow["\rho", from=1-5, to=1-7]
	\arrow["\rho"', from=3-5, to=3-7]
	\arrow["{\Gamma^{+}_{(w,z)}}", from=1-7, to=3-7]
	\arrow["{\Gamma^{+}_{(w,z)}}", from=1-3, to=3-3]
\end{tikzcd}
\]
Where $\Gamma^{+}$-maps are the large surgery isomorphisms, \cite{OSknots} and the subscript denotes the pair of basepoints that is used to define the map. For example $\Gamma^{+}_{(\tau w, \tau z)}$ is defined as follows,
\[
\Gamma^{+}_{(\tau w, \tau z)}([\bold{x},i])= \sum\limits_{\bold{y} \in \mathbb{T}_{\alpha} \cap \mathbb{T}_{\beta}} {\sum\limits_{\substack{\psi \in \pi_{2}(\bold{x},\Theta, \bold{y}) \\ \mu (\psi)=0 ,  n_{\tau z}(\psi)=n_{\tau w}(\psi) }} } \# \mathcal{M}(\psi) \cdot [\bold{y}, i- n_{\tau w}(\psi), i- n_{\tau z}(\psi)].
\] 
Let us note that the squares on the middle and on the right commute tautologically. For example, in the case of the middle square, this is seen by the observation that there is an one-one correspondence between the pseudo-holomorphic triangles $\psi$ with boundary among $\alpha$, $\beta$ and $\gamma$ curves (with $n_{z}(\psi)=n_{w}(\psi)$) with pseudo-holomorphic triangles $\psi^{\prime}$ with boundary among $\tau \alpha$, $\tau \beta$ and $\tau \gamma$ (with $n_{z}(\psi ^{\prime})=n_{w}(\psi^{\prime})$), given by taking $\psi^{\prime}$ to be $\tau \psi$.  The count of the former is used in definition of $\Gamma^{+}_{(w,z)}$ while the count of the latter is used in defining $\Gamma^{+}_{(\tau w,\tau z)}$.


\noindent Now, we focus on the commutation of the left square. To show the commutation, we will need to construct a sequence of Heegaard moves taking the Heegaard triple $\mathcal{H}_{\Delta}$ to the Heegaard triple $\tau \bar{\rho} \mathcal{H}_{\Delta}:=(\tau \bar{\rho} \Sigma, \tau \bar{\rho} \alphas, \tau \bar{\rho} \gammas, \tau \bar{\rho} \betas,  w, z)$. This will in turn induce maps $\Phi^{'}_{1}$ and $\Phi_{1}$. Before going forward, we describe a useful interpretation of the map $\rho$. We note that due to our constriction of the Heegaard surface $\Sigma$, it can be seen that there is a knot longitude $\gamma_{0}$ lying in the surface $\Sigma$ which is fixed by the involution $\tau$. We refer to this equivariant knot longitude as $\gamma_{0}$. For example, in Figure~\ref{trefoil_periodic} the green curve represents $\gamma_{0}$. 
\begin{figure}[h!]
\centering
\includegraphics[scale=.60]{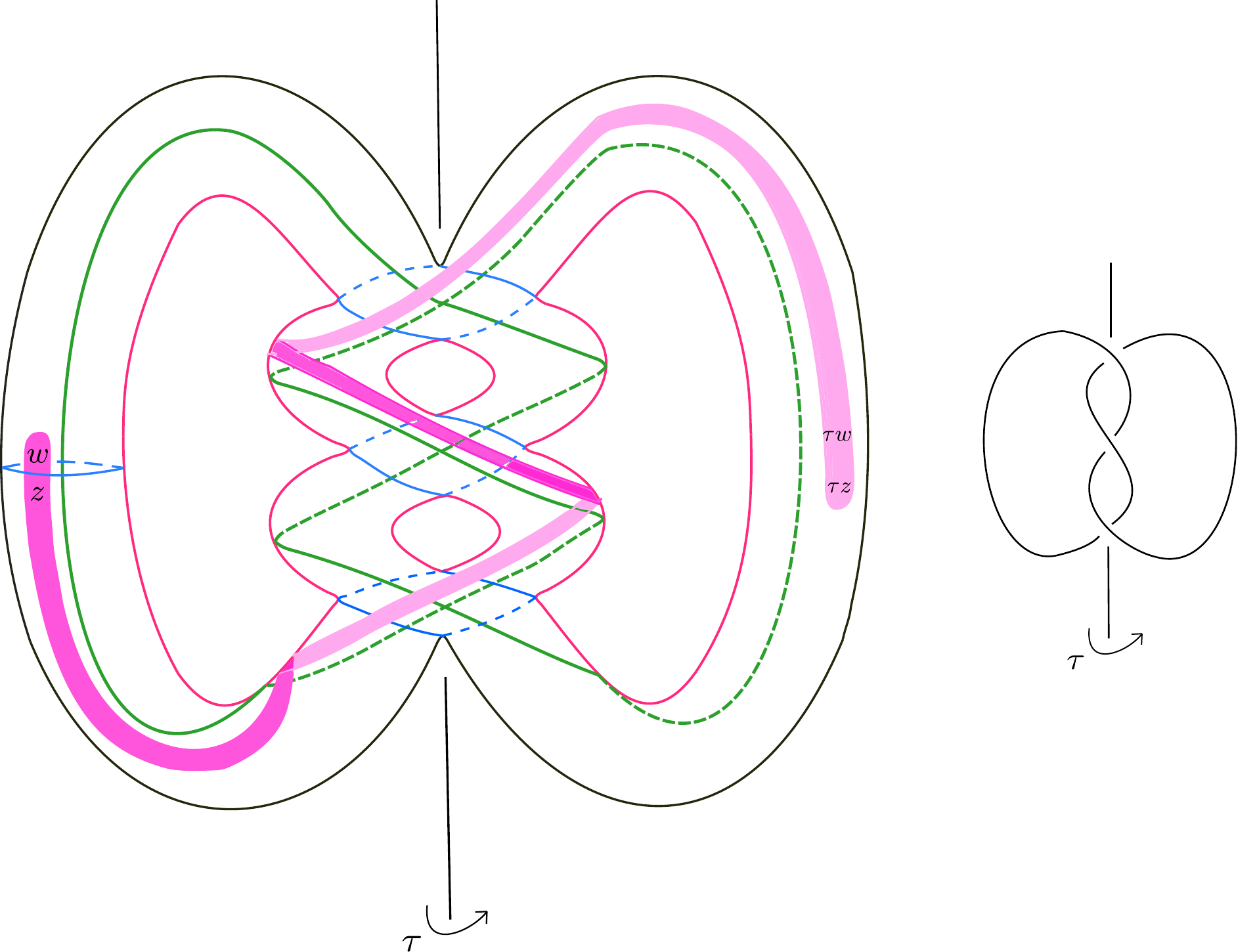} 
\caption
{Equivariant Heegaard diagram for the left-handed trefoil with the indicated periodic symmetry on the right. the $\alpha$ and $\beta$-curves are in red and blue respectively. The green curve is the equivariant longitude $\gamma_0$. Other $\gamma$-curves differ from the blue $\alpha$-curves by a small isotopy. The pink region depicts the finger-moving isotopy region running parallel to $\gamma_0$. }\label{trefoil_periodic}
\end{figure}

In particular, the surgery longitude $\gamma_{g}$ can be obtained by winding $\gamma_{0}$ around $\beta_{g}$ so that it represents the $p$-surgery curve. Let us join $w$ to $z$ in the complement of the $\alpha$-curves and $z$ to $w$ in the complement of the $\beta$-curve to obtain the knot equivariant $K$. Let us now take a small annular neighborhood of an arc of $K$ on $\Sigma$ which is disjoint from $\gamma_0$ but runs parallel to it, taking $(w,z)$ to $(\tau w, \tau z)$. We take the finger moving isotopy which is used to define $\rho$ to be supported inside the annular region and be identity outside it. In Figure~\ref{trefoil_periodic}, the pink region depicts the isotopy region on $\Sigma$. We now describe the Heegaard moves that will induce the maps $\Phi$ and $\Phi^{'}$.

\begin{enumerate}
 \item Firstly we apply Dehn twists in the winding region so that $\gamma_{g}$ becomes $\gamma_{0}$ at the cost of winding the curve $\alpha_{g}$ accordingly. The winding region after the twists is depicted in Figure~\ref{dehn}. By abusing notation, we will now refer to the $\gamma_0$-curve as $\gamma_{g}$ and will continue to refer to the new $\alpha_g$ curve (after the Dehn-twist) as $\alpha_g$.

\item \label{moves} Before going on to the next set of moves, let us first understand the effect of $\rho$ in the Heegaard diagram. Since the annular region supporting the isotopy runs parallel to $\gamma_g$, the isotopy does not change $\gamma_g$. Moreover since $\gamma_g$ is chosen to be invariant with respect to $t_K$, we have $t_K \bar{\rho} (\gamma_g)=\gamma_g$. Since $\H_K$ and $\tau \bar{\rho} \H_K$ both represent the doubly-based knot $(K,w,z)$, by \cite{OSknots} there is a sequence of Heegaard moves relating diagrams $\H_K$ and $\tau \bar{\rho} \H_K$. More explicitly, these moves are

\begin{itemize}
\item Handleslides and isotopies among the $\alpha$-curves,

\item Handleslides and isotopies among the $\beta$-curves $\{ \beta_1,\beta_2, \cdots \beta_{g-1} \}$,

\item Handleslides and isotopies of $\beta_{g}$ across one of $\beta$-curves $\beta_{i}$ for $i \in 2, \cdots g-1$,

\item Stabilizations and de-stabilizations of the Heegaard diagram.

\end{itemize}

\noindent Here all the isotopies and handleslides are supported in the complement of the basepoints $w$ and $z$.

\begin{figure}[h!]
\centering
\includegraphics[scale=.9]{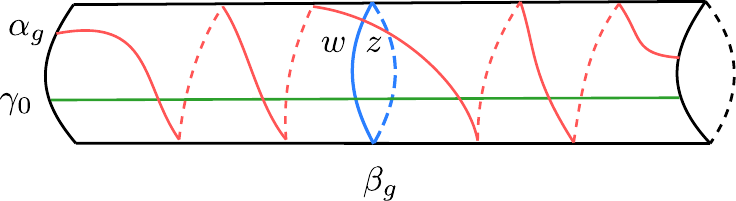} 

\caption
{After applying Dehn twist in the winding region, the $\alpha$-curve twists while the longitude $\gamma_g$ becomes the equivariant longitude $\gamma_0$.}\label{dehn}
\end{figure}

\item We would now like to argue that there is a sequence of doubly-based Heegaard moves relating the diagrams $\H_p$ and $\tau \bar{\rho} \H_p$. Note that the $\alpha$-moves applied in Step~\ref{moves} already take $\alphas$ to $\tau \bar{\rho} \alphas$. Since the curve $\gamma_{g}$ satisfies $\tau \bar{\rho} \gamma_{g}=\gamma_{g}$, we only need to define moves taking the $\gamma$-curves $\gamma_{1}, \gamma_2, \cdots \gamma_{g-1}$ to $\gamma$-curves $\tau \bar{\rho}  \gamma_{1}, \tau \bar{\rho} \gamma_2, \cdots \tau \bar{\rho} \gamma_{g-1}$. However, while defining the moves, we also need to make sure that we do not change $\gamma_g$. But these $\gamma_{i}$ differ from the corresponding $\beta_{i}$-curve (for $i \neq g$) by a small Hamiltonian isotopy. So on $\gamma_i$ we apply the isotopies and handleslides which are induced from the associated $\beta$-curves. Importantly, these $\beta$-moves do not involve $\beta_g$, hence $\gamma_g$ is unchanged during this process. 

 \end{enumerate} 
 
\noindent The moves described in the steps above induce the maps $\Phi_1$ and $\Phi^{'}_1$. We are now in a position to show that the following diagram commute up to chain homotopy.

 \[
 \begin{tikzcd}[row sep =large]
{CF}^{+}(\mathcal{H}_{p}) \arrow{r}{\Phi} \arrow{d}{\Gamma^{+}_{(w,z)}} & {CF}^{+}(\tau \bar{\rho} \mathcal{H}_{p}) \arrow{d}{\Gamma^{+}_{(w,z)}} \\
{A^{+}_{0}(\mathcal{H}_{K})} \arrow{r}{\Phi}  & {A^{+}_{0}(\tau \bar {\rho} \mathcal{H}_{K})} 
\end{tikzcd}
\]

\noindent Let $\mathcal{H}^{(i)}_{p}$ and $\mathcal{H}^{(i)}_{K}$  represent the Heegaard data of the surgered manifold and the knot respectively after applying in the $i$-th move, among the moves listed above. We then have maps 
\[
\Phi(\mathcal{H}^{(i)}_{p},\mathcal{H}^{(i+1)}_{p}): CF^{+}(\mathcal{H}^{(i)}_{p}) \longrightarrow CF^{+}(\mathcal{H}^{(i+1)}_{p}),
\]
and 
\[
\Phi(\mathcal{H}^{(i)}_{p},\mathcal{H}^{(i+1)}_{K}): CF^{+}(\mathcal{H}^{(i)}_{K}) \longrightarrow CF^{+}(\mathcal{H}^{(i+1)}_{K}).
\] 
Furthermore, at each step we have the triangle counting maps from the large surgery isomorphism that we described earlier
\[
\Gamma^{+}_{i}: CF^{+}(\mathcal{H}^{(i)}_{p}) \longrightarrow A^{+}_{0}(\mathcal{H}^{(i)}_{K}).
\]
We now claim that we can construct homotopies $R^{+}_{i}$ which make each of the diagrams below commute.
\[
 \begin{tikzcd}[column sep =huge, row sep =large]
{CF}^{+}(\mathcal{H}^{(i)}_{p}) \arrow{r}{\Phi(\mathcal{H}^{(i)}_{p},\mathcal{H}^{(i+1)}_{p})} \arrow[dr, dashrightarrow, "R^{+}_{i}"] \arrow{d}{\Gamma_{i}^{+}} & {CF}^{+}(\mathcal{H}^{(i+1)}_{p}) \arrow{d}{\Gamma_{i+1}^{+}} \\
{A^{+}_{0}}(\mathcal{H}^{(i)}_{K}) \arrow{r}{\Phi(\mathcal{H}^{(i)}_{K},\mathcal{H}^{(i+1)}_{K})} & {A^{+}_{0}}(\mathcal{H}^{(i+1)}_{K}) 
\end{tikzcd}
\]
The argument is similar to that in the proof of large surgery formula in involutive Floer homology \cite[Theorem 1.5]{HM}. Although the Heegaard moves defining the maps $\Phi(\mathcal{H}^{(i)}_{p},\mathcal{H}^{(i+1)}_{p})$ and $\Phi(\mathcal{H}^{(i)}_{p},\mathcal{H}^{(i+1)}_{K})$ are different, the underlying argument is still the same (in fact, the step where Hendricks and Manolescu had to define a chain homotopy where underlying Heegaard triple changed from right-subordinate to left-subordinate is absent here, since all the triples are right-subordinate).

Recall that the vertical maps are obtained by counting appropriate pseudo-holomorphic triangles $\psi$ in the homotopy class $n_{z}(\psi)=n_{w}(\psi)$, we can construct a chain homotopy $R^{+}_{i}$ for the $i$-th step by counting pseudo-holomorphic quadrilaterals $\Box$ again in homotopy class $n_{z}(\Box)=n_{w}(\Box)$. This follows from the proof of the independence of the $2$-handle maps from the underlying Heegaard triple subordinate to a bouquet \cite[Theorem 4.4]{OSsmooth4}, see also \cite[Theorem 6.9]{Juhasz}. The proof of independence, in turn, follows from the associativity of the holomorphic triangle maps \cite[Theorem 2.5]{OSsmooth4}, where the count of holomorphic quadrilaterals yields the homotopy. In our case, the exact same argument holds, except we need to restrict to the homotopy class of the quadrilaterals with $n_{z}(\Box)=n_{w}(\Box)$. This finishes the proof.

\end{proof}

\subsection{Large surgeries on strongly invertible knots}

We now move towards proving a similar formula for the strongly invertible knots. The underlying idea is essentially similar to that in the proof of large surgery formula for periodic knots. 

\begin{lemma}\label{strongproof}
Let $K \subset S^{3}$ be a strongly invertible knot with an involution $\tau$. Then for all $p\geq g(K)$ as a relatively graded $\mathbb{Z}_{2}[Q,U]/(Q^{2})$-modules, we the following isomorphism
\begin{center}
$HFI^{+}_{\tau}(S^{3}_{p}(K),[0]) \cong H_{*}(AI_{0}^{+,\tau})$.
\end{center}
\end{lemma}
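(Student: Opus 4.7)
The proof will run in close parallel with Lemma~\ref{periodicproof}: it suffices to produce a chain homotopy $R^+_p$ fitting into the square
\[
\begin{tikzcd}[column sep=large, row sep=large]
{CF^+(S^3_p(K),[0])} \arrow[dr, dashrightarrow, "R^+_p"] \arrow{r}{Q(\mathrm{id}+\tau)} \arrow{d}{\Gamma^+_{p,0}} & {CF^+(S^3_p(K),[0])} \arrow{d}{\Gamma^+_{p,0}} \\
{A^+_0(K)} \arrow{r}{Q(\mathrm{id}+\tau_K)} & {A^+_0(K)}
\end{tikzcd}
\]
after which the same mapping-cone argument used in the periodic case identifies the two homologies as $\mathbb{Z}_{2}[U,Q]/(Q^{2})$-modules. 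The two new features, compared with the periodic setting, are that $\tau$ now reverses the orientation of the meridian of the surgery solid torus, and that $\tau_K$ factors through the skew-filtered swap map $sw$ defined in Subsection~\ref{action_knot}.

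The plan is first to realize $\tau$ (by conjugation, as in \cite{waldhauseninvolutionen}) as a $180^\circ$ rotation whose axis meets $K$ in the two fixed points, and to build a doubly-based equivariant Heegaard diagram $\mathcal{H}_K=(\Sigma,\alphas,\betas,w,z)$ for $(K,w,z)$ from an equivariant projection of $K$, with $w$ and $z$ placed symmetrically on the two sides of $\beta_g$ so that $\tau w=z$ and $\tau z=w$. One then enlarges this to a Heegaard triple $\mathcal{H}_\Delta=(\Sigma,\alphas,\betas,\gammas,w,z)$ suitable for $\Gamma^+_{p,0}$, by taking $\gamma_i$ ($i<g$) to be small Hamiltonian translates of the $\beta_i$ and winding a $\tau$-invariant longitude $\gamma_0\subset\Sigma$ in a neighborhood of $\beta_g$ to produce $\gamma_g$. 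As in the periodic proof, I would then rewrite $\tau_K$ in the alternative form $\tau_K\simeq sw\circ \Phi_1\circ t_K$ (and similarly rewrite the action of $\tau$ on the surgered manifold) by pushing the basepoint-moving isotopies past the push-forward and Heegaard-move maps.

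Next, I would decompose the main square into three sub-squares corresponding respectively to $t_K$, to the Heegaard-move map $\Phi_1$, and to $sw$, exactly as was done for $t$, $\Phi$, $\rho$ in the periodic case. The push-forward sub-square commutes tautologically under the bijection $\psi\leftrightarrow\tau\psi$ on pseudo-holomorphic triangles, since $\tau$ is orientation-preserving on $\Sigma$ and simultaneously permutes the three curve systems and the basepoints. The sub-square involving $sw$ also commutes tautologically: the holomorphic-triangle count defining $\Gamma^+_{p,0}$ uses the condition $n_w(\psi)=n_z(\psi)$, which is symmetric in $w$ and $z$, so that relabeling the basepoints as $(z,w)$ simply interchanges the two filtration indices on $A^+_0$, and this is precisely the action of $sw$. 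For the $\Phi_1$ sub-square I would construct, as in Step (3) of the proof of Lemma~\ref{periodicproof}, a sequence of doubly-pointed Heegaard moves from $\mathcal{H}_\Delta$ to $sw\circ\tau\mathcal{H}_\Delta$ that restricts to a sequence relating the knot Heegaard diagrams and to a sequence relating the surgered Heegaard diagrams; commutativity with $\Gamma^+_{p,0}$ at each intermediate step follows by counting holomorphic quadrilaterals in the homotopy class $n_w(\Box)=n_z(\Box)$, invoking associativity of the triangle maps \cite[Theorem 4.4]{OSsmooth4} exactly as before.

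The principal technical obstacle I expect is arranging that these ingredients are genuinely compatible in the strongly invertible setting. Because $\tau$ reverses the orientation of $K$, the finger-moving arc on $\Sigma$ used to return $(\tau w,\tau z)$ to $(w,z)$ must be routed in such a way that its parallel transport does not disturb the winding region defining $\gamma_g$; equivalently, one must ensure that the equivariant longitude $\gamma_0$ can simultaneously be chosen fixed by $\tau$ (up to orientation) and transverse to the support of the basepoint-moving isotopy. Once this geometric compatibility is secured, $\tau\gamma_g=\gamma_g$ and the three sub-squares assemble to give $R^+_p$, completing the proof in verbatim analogy with the periodic case.
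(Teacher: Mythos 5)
Your overall strategy is correct — represent $\tau$ as a $180^\circ$ rotation, build an equivariant Heegaard triple with $\beta_g$ and the basepoints placed symmetrically in the winding region, choose an equivariant longitude $\gamma_0$, decompose the main square into three sub-squares, and show each commutes — and your treatment of the first two sub-squares (the tautological pushforward and the Heegaard-move/quadrilateral argument) matches the paper.

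However, there is a gap in your handling of the third sub-square. You describe it as ``the sub-square involving $sw$'' and argue it commutes tautologically because the triangle-count condition $n_w(\psi)=n_z(\psi)$ is symmetric in $w$ and $z$. But the two rows of this square are \emph{not} the same kind of map: the knot side has the formal relabeling $sw\colon A^+_0(\mathcal{H}_{K^r})\to A^+_0(\mathcal{H}_K)$, whereas the surgered side has the genuinely geometric finger-moving pushforward $\rho\colon CF^+(\mathcal{H}^z_p,[0])\to CF^+(\mathcal{H}^w_p,[0])$ (there is no analogue of $sw$ for a singly-pointed $3$-manifold diagram — after $t$ and $\Phi'$ the $3$-manifold basepoint is $z$ and one must physically move it back to $w$). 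Your symmetry observation about $\Gamma^+$ is correct as far as it goes, but it only establishes that $sw\circ\Gamma^+_{(z,w)}=\Gamma^+_{(w,z)}$; it does not yet say anything about $\rho$. This is exactly the new subtlety the strong case introduces over the periodic case, and your ``principal technical obstacle'' paragraph misidentifies it as an issue about disturbing the winding region rather than the mismatch between $\rho$ and $sw$. The paper resolves it by choosing the finger-moving arc to be the short arc from $z$ to $w$ inside the winding region, crossing $\beta_g$ once but disjoint from all $\alpha$- and $\gamma$-curves, and then invoking \cite[Theorem 14.11]{Zemkegraph} to conclude that $\rho$ is chain-homotopic to a map tautological on intersection points $[\bold{x},i]\mapsto[\bold{x},i]$; only with this in hand does the square with $\rho$ on top and $sw$ on the bottom commute on the nose. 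Without that step your argument does not close.
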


\begin{proof}
As before, we represent $\tau$ by $180$-degree rotation around the z-axis. For simplicity, we only consider the case where basepoints $(w,z)$ on the knot $K$ are switched by $\tau$. Following, Lemma~\ref{periodic} we construct an Heegaard triple $\mathcal{H}_{\Delta} :=(\Sigma, \alphas, \gammas, \betas, w, z)$, where the Heegaard surface $\Sigma$ is fixed by $\tau$ setwise. However, we will now modify the construction slightly, so that it is compatible with the present argument. Recall from the previous construction that, we had placed the meridian $\beta_g$ on the tube corresponding to one of the edges of $c$ (i.e. the boundary of the unbounded region.). Now since $\tau$ is a strong involution on $K$, the axis intersects knot $K$ in two points. If one of those points lies in the arc of the knot corresponding to $c$, we place the meridian $\beta_g$ along the axis so that the winding region is fixed by $\tau$ setwise. We also base the basepoints $w$ and $z$ so that they are switched by $\tau$. Figure~\ref{strongwinding} depicts the equivariant winding region. 

\begin{figure}[h!]
\centering
\includegraphics[scale=.9]{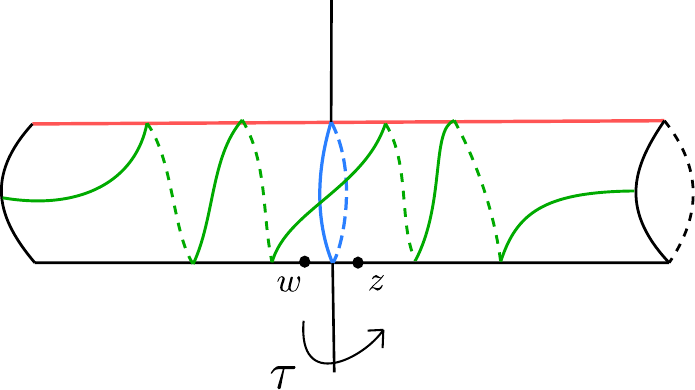} 

\caption
{Placement of the basepoints in the winding region so that they are switched.}\label{strongwinding}
\end{figure}
\noindent
However, it is possible that none of the two fixed points on $K$ lie in $c$, see Figure~\ref{badgraph} for an example. Then let $e$ be an internal edge so that the arc in the knot $K$ corresponding to $e$ has a fixed point, as in Figure~\ref{badgraph}. We now delete one of the $\alpha$ curves lying the tube corresponding to $e$ and add a $\alpha$ curve in the tube corresponding to $c$. Finally, we place the meridian $\beta_g$ along the axis in tube corresponding to $e$. This step is illustrated in Figure~\ref{switchbeta}.

 \begin{figure}[h!]
\centering
\includegraphics[scale=.6]{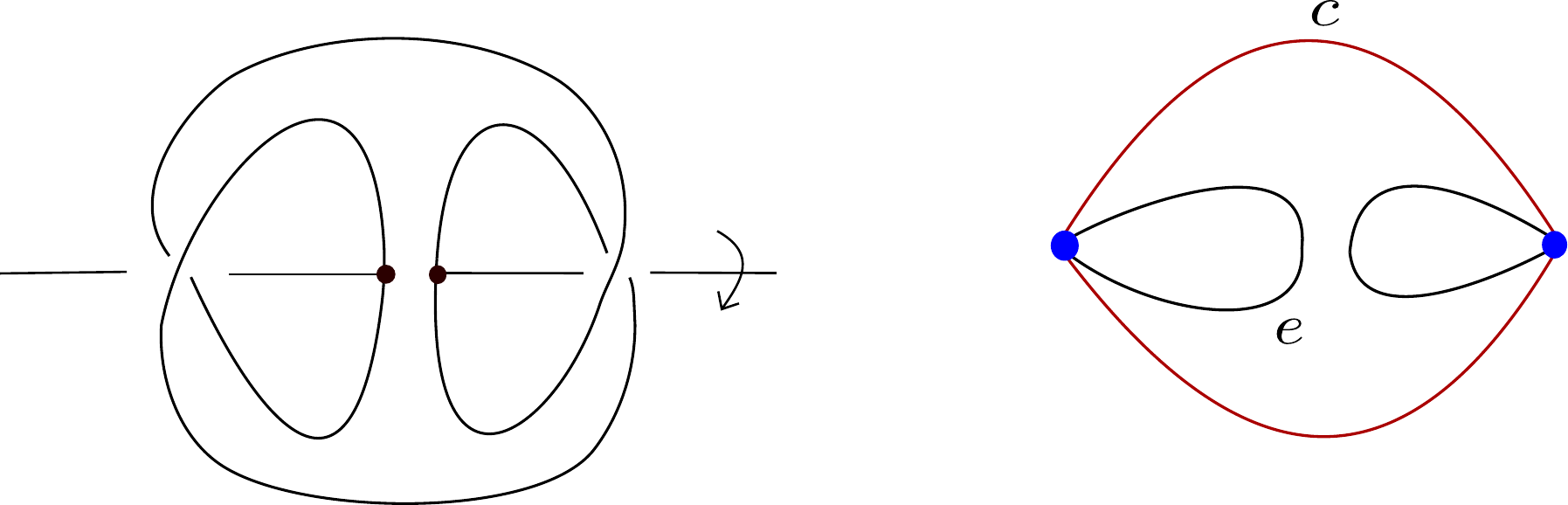} 
\caption
{An illustration of a graph, where the fixed points do not intersect the edge $c$.}\label{badgraph}
\end{figure}

\noindent
Note that our choice of placing the basepoints dictates that in order to define an automorphism on the surgered $3$-manifold $\tau: CF(\H_{p}) \rightarrow CF(\H_p)$, we need to choose a finger-moving isotopy taking the basepoint $\tau w =z$ to $w$ along some path. However, as seen in Proposition~\ref{localbase} the definition of $\tau$ is independent of this choice of isotopy up to local equivalence. Here we will choose a small arc connecting $w$ and $z$ basepoints in the winding region, i.e. the arc $\rho$ which intersects the curve $\beta_g$ once.

 \begin{figure}[h!]
\centering
\includegraphics[scale=.7]{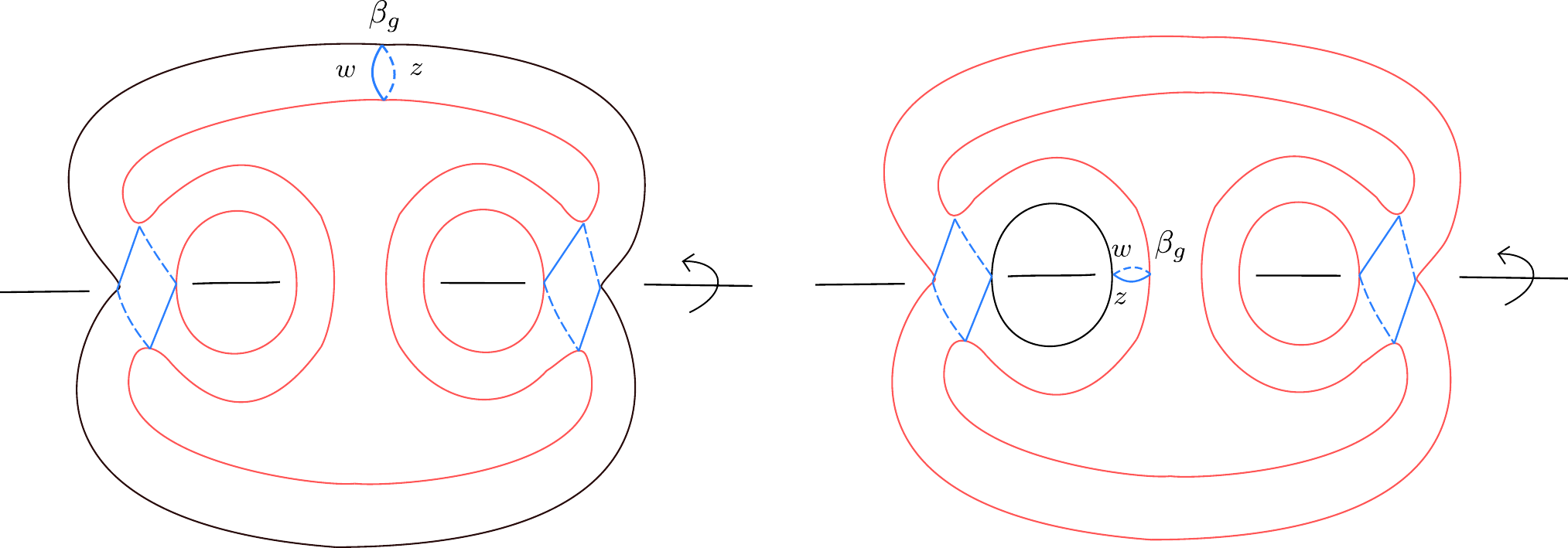} 
\caption
{Adjusting the meridian so that the basepoints are switched.}\label{switchbeta}
\end{figure}

We now wish to show the three squares in the diagram below commute up to chain homotopy. The composition of the maps in the top row represent $\tau: CF(\H_{p}) \rightarrow CF(\H_p)$, while the bottom row represents $\tau_K: CFK(\H_K) \rightarrow CFK(\H_K)$. For the maps $\Gamma^{+}$ we follow notation from the proof of Lemma~\ref{periodic}. The basepoint in the superscript of $\H_p$ indicates which basepoint we use for the surgered manifold $CF(S^{3}_{p}(K),[0])$ while defining maps on it.
\[
\begin{tikzcd}[column sep =large,row sep=large ]
CF^{+}(\mathcal{H}^{w}_{p},[0]) \arrow{r}{t} \arrow{d}{\Gamma^{+}_{(w,z)}} &
  CF^{+}(\tau \mathcal{H}^{z}_{p},[0])    \arrow{r}{\Phi^{'}} \arrow{d}{\Gamma^{+}_{(z,w)}} &
 CF^{+}( \mathcal{H}^{z}_{p},[0]) \arrow{r}{\rho} \arrow{d}{\Gamma^{+}_{(z,w)}} &
  CF^{+}(\mathcal{H}^{w}_{p},[0]) \arrow{d}{\Gamma^{+}_{(w,z)}}
\\
A^{+}_{0}(\mathcal{H}_{K}) \arrow{r}{t_K} &
 A^{+}_{0}(\tau \mathcal{H}_{K}) \arrow{r}{\Phi} &
  A^{+}_{0}(\mathcal{H}_{K^{r}}) \arrow{r}{sw} & A^{+}_{0}(\mathcal{H}_{K})
\end{tikzcd}
\]

\begin{figure}[h!]
\centering
\includegraphics[scale=.65]{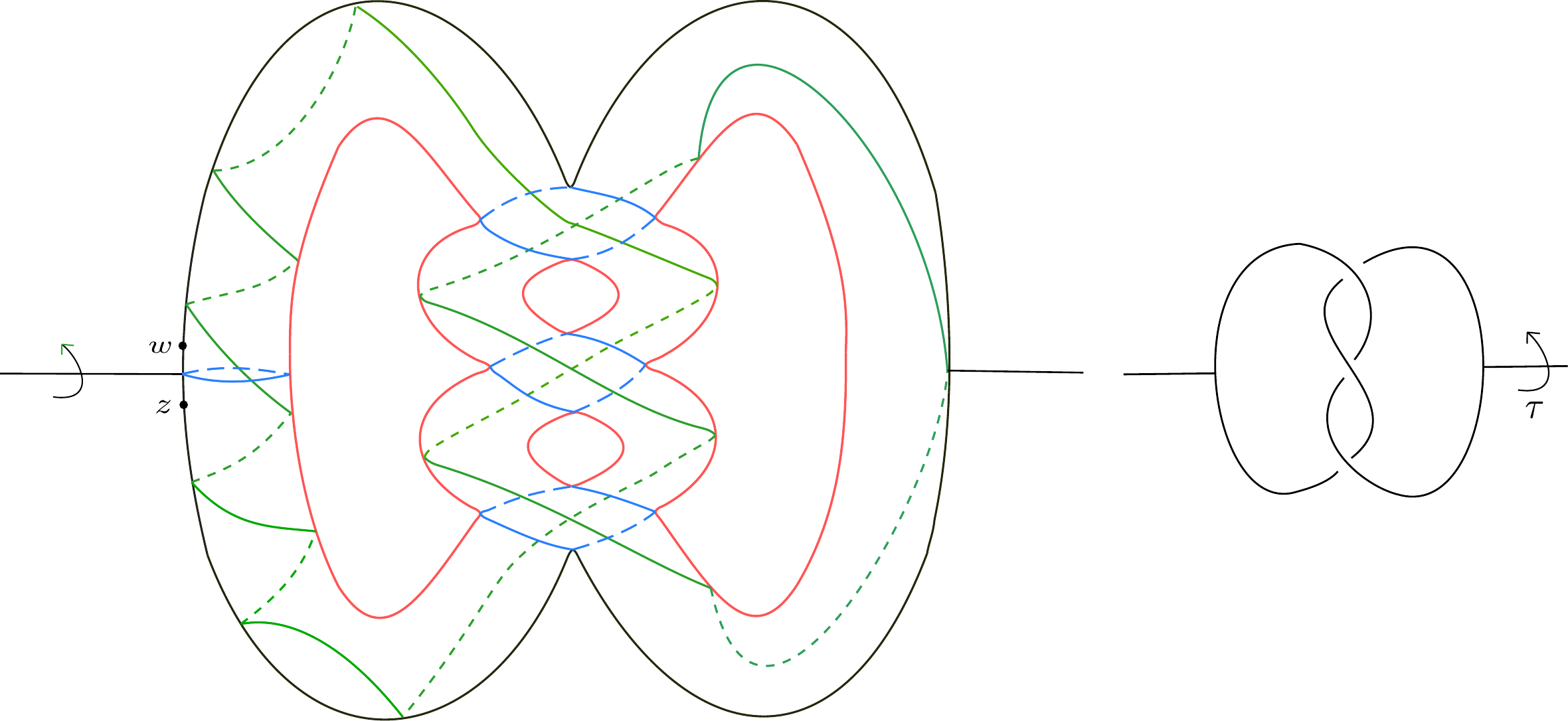} 

\caption
{Equivariant Heegaard diagram for the left-handed trefoil with an equivariant longitude $\gamma_g$ in green.}\label{trefoil_strong_surgery}
\end{figure}

The argument for commutation of the left and the middle squares are verbatim to the latter half of the proof of large surgery formula for periodic knots, so we will be terse. Firstly, we notice that the square on the left commutes tautologically. For the commutation of the middle square, we use a similar strategy as in the periodic case. We notice that we can place the $\gamma_g$ on the Heegaard diagram so that $\tau \gamma_g$ can be isotoped to $\gamma_g$ in the complement of the basepoints $w$ and $z$, see Figure~\ref{isotopy} (and also Figure~\ref{trefoil_strong_surgery}).
\begin{figure}[h!]
\centering
\includegraphics[scale=.9]{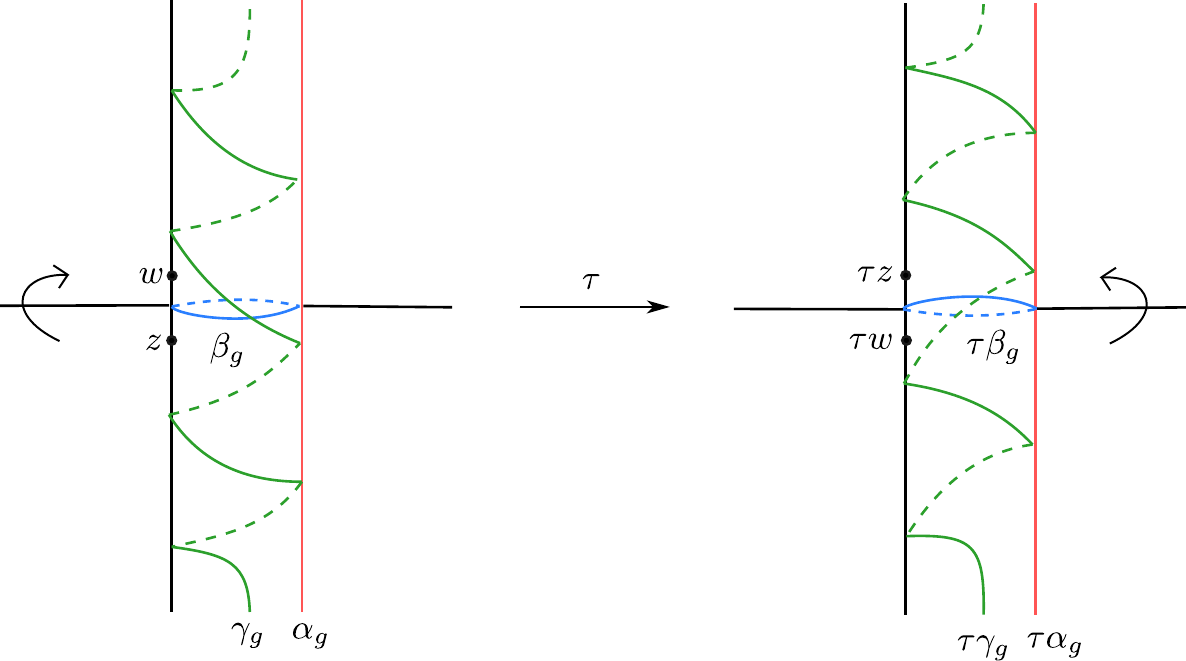} 
\caption
{The effect of taking the push-forward of the winding region by the involution $\tau$. Note that $\gamma_g$ is chosen in a way such that $\tau \gamma_g$ can be isotoped to $\gamma_g$ in the complement of the basepoints $w$ and $z$. }\label{isotopy}
\end{figure}
There are Heegaard moves in the complement of basepoints taking $\tau \H_K$ to $\H_K^{r}$ since they both represent the knot $(K^{r},z,w)$. This defines the map $\Phi$. Now towards defining $\Phi^{'}$, we already have the $\alpha$-moves taking $\tau \alphas$ to $\alphas$. The $\gamma$-moves taking $\tau \gammas$ to $\gammas$ while being supported in the complement of the basepoint is defined analogously as in the periodic case. Firstly we apply the isotopy taking $\tau \gamma_g$ to $\gamma_g$ then we apply replicate the moves required take $\tau \beta_i$ to $\beta_i$ (for $i \neq g$) to take $\tau \gamma_i$ to $\gamma_i$. The commutation of the middle square then follows from a similar argument as in the periodic case.

The square on the right is somewhat different from the ones we have encountered so far. Since for this square, we are applying maps $\rho$ and $sw$ on $CF^{+}$ and $A^{+}_0$ respectively, which are \textit{a priori} different in nature. While the map $\rho$ is induced by pushforward defined by the finger moving isotopy along the path $\rho$, the map $sw$ is not; rather it has no such geometric interpretation. Nevertheless, we claim that the diagram commutes up to chain homotopy. Note that the path $\rho$ does not intersect either the $\alpha$ or the $\gamma$ curves. It follows that the map $\rho$ homotopic to a map that is tautological on the intersection points, i.e. $\rho[\bold{x},i]=[\bold{x},i]$. To see this, note that $\rho$ is explicitly defined as
\[
\rho:= \Phi_{\rho_{*}(J) \rightarrow J}(\H^{w}_p, \H^{w}_{p}) \circ \ t_{\rho},
\]
where $t_{\rho}$ represents the push forward of the Heegaard data $\H^{z}_p$ under the image of a finger-moving isotopy. However, this changes the almost complex structure $J$. We then post compose $t_{\rho}$ by the continuation map $\Phi_{\rho_{*}(J) \rightarrow J}$ taking the almost complex structure $t_{{\rho}_{*}}(J)$ back to $J$. In the proof of \cite[Theorem 14.11]{Zemkegraph} by Zemke, it was shown that both the maps involved in the definition are tautological. The commutation of the right square then follows tautologically from the definition of the maps involved. This completes the proof.

\end{proof}

\begin{proof}[Proof of Theorem~\ref{t1}]
We showed this for a certain choice of Heegaard diagram in Lemma~\ref{periodicproof} and Lemma~\ref{strongproof}. The proof is finished after observing that actions $\tau$ and $\tau_K$ are independent of the choice of the Heegaard diagram up to chain homotopy.
\end{proof}
Let us now also discuss the proof of Corollary~\ref{corollary}. The $\iota \circ \tau$ action on $CF^{+}$ was studied in \cite{dai2020corks}. It was shown that the $\iota \circ \tau$ action is a well-defined involution on $CF^{+}$ \cite[Lemma 4.4]{dai2020corks} (this was shown for integer homology spheres, but the proof carries through to rational homology spheres while specifying that $\tau$ fixes the $[0]$-$\spinc$ structure). In a similar vein, one can study the $\iota_K \circ \tau_K$ action on knots in integer homology spheres, which again induce an action $\iota_{K} \circ \tau_{K}$ on the $A^{+}_{0}$-complex. We show that we can identify the action of  $\iota \circ \tau$ for large equivariant surgeries on symmetric knots with the action of $\iota_{K} \circ \tau_{K}$ on the $A^{+}_{0}$-complex.

\begin{proof}[Proof of Corollary~\ref{corollary}]
In the proof of the involutive large surgery formula, Hendricks-Manolescu  \cite[Theorem 1.5]{HM}, showed,
\[
\iota_{K} \circ \Gamma^{+}_{(p,0)} \simeq \Gamma^{+}_{(p,0)} \circ \iota.
\]
In the proof of Theorem~\ref{t1}, we showed,
\[
\tau_{K} \circ \Gamma^{+}_{(p,0)} \simeq \Gamma^{+}_{(p,0)} \circ \tau.
\]
Notably, the map $\Gamma^{+}_{(p,0)}$ is the same for both surgery formulas. Hence the conclusion follows.
\end{proof}



\section{Computations}\label{computesection}
As discussed in the introduction, computation of the action of symmetry on the knot Floer complex has been limited. In this section, we compute the action of $\tau_K$ on the knot Floer complex for a class of strongly-invertible knots and a class of thin knots. The tools we use for these computations are functoriality of link Floer cobordisms maps \cite{Zemkelinkcobord} and the grading and filtration restrictions stemming from the various properties of the knot action detailed in Section~\ref{action_knot}. These computations in turn with the equivariant surgery formula, yields computations of the induced action $\tau$ on the $3$-manifolds obtained by large surgery on $K$.

\subsection{Strong involution on $K \# K^{r}$}\label{KK}

We now compute the action for a certain class of strongly invertible knots. Let $(K,w,z) \subset \mathbb{R}^{3} \bigcup \{ \infty \}$ be a doubly-based oriented knot which is placed in the first quadrant of $\mathbb{R}^{3}$. Let us place another copy of $K$ in $\mathbb{R}^3$, obtained by rotating the original $K$, by $180$-degrees along the $z$-axis. We will represent the rotation as $\tau$ and the image of $K$ as $\tau K$. We then connected sum $(K,w,z)$ and $(\tau {K}^{r}, \tau w, \tau z)$ by a trivial band that intersect the $z$-axis in two points, see Figure~\ref{band}.
\begin{figure}[h!]
\centering
\includegraphics[scale=.6]{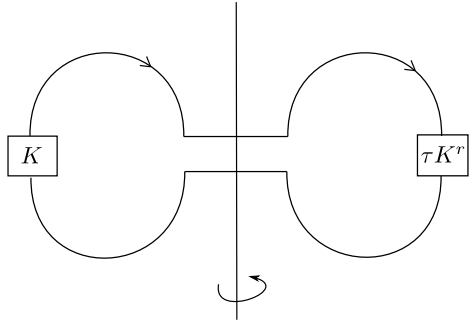} 
\caption
{The knot $K \# \tau K^{r}$ with the strong involution $\tilde{\tau}_{sw}$.}\label{band}
\end{figure}
There is an obvious strong involution induced by $\tau$ on the connected sum knot $K\# \tau {K}^{r}$, as in Figure~\ref{band}. Moreover, we place two basepoints $(w^{\prime},z^{\prime})$ on $K\# \tau {K}^{r}$ so that they are switched by $\tau$. Let us denote the action of $\tau$ on $CFK^{\infty}(K\# \tau {K}^{r},w^{\prime}, z^{\prime})$ as $\tilde{\tau}_{sw}$.

\noindent We begin by defining a map $\tilde{\tau}_{exch}$, which may be interpreted as being induced from \textit{exchanging} the two knots $(K,w,z)$ and $(\tau K^{r}, \tau w, \tau z)$ via $\tau$:
\[
\tilde{\tau}_{exch} : CFK^{\infty}(K,w,z) \otimes CFK^{\infty}(\tau{K}^{r}, \tau w, \tau z) \longrightarrow   CFK^{\infty}(K,w,z) \otimes CFK^{\infty}(\tau{K}^{r}, \tau w, \tau z).
\]
$\tilde{\tau}_{exch}$ defined as a composition of several maps. We start by taking the half-Dehn twist along the orientation of the knots $K$ and $\tau K^{r}$ mapping the $w$-type basepoints to the $z$-type basepoints and vice versa.
\[
\rho_1 \otimes \rho_2: CFK^{\infty}(K,w,z) \otimes CFK^{\infty}( \tau {K}^{r},\tau w, \tau z) \longrightarrow CFK^{\infty}(K,z,w) \otimes CFK^{\infty}( \tau {K}^{r},\tau z, \tau w).
\]
We then apply the push forward map associated with the diffeomorphism $\tau$, which switches the two factors:
\[
t: CFK^{\infty}(K,z,w) \otimes CFK^{\infty}( \tau{K}^{r},\tau z, \tau w) \longrightarrow   CFK^{\infty}(K^{r},z,w) \otimes CFK^{\infty}(\tau K, \tau z, \tau w).
\]
Finally, we apply the basepoint switching map from Subsection~\ref{action_knot}  to each component:
\[
sw \otimes sw: CFK^{\infty}(K^{r},z,w) \otimes CFK^{\infty}(\tau K,\tau z,\tau w) \rightarrow CFK^{\infty}(K,w,z) \otimes CFK^{\infty}( t {K}^{r},\tau w, \tau z).
\]
$\tilde{\tau}_{exch}$ is then to be the composition
\[
\tilde{\tau}_{exch} :=  sw \otimes sw \circ t \circ \rho_1 \otimes \rho_2.
\]
Note that, the order in which the maps appear in the above composition in the definition of $\tilde{\tau}_{exch}$ can be interchanged up to homotopy, although we will not need any such description.

Another useful piece of data for us will be the definition of the action of a strongly invertible symmetry on a knot when the basepoints are fixed by the involution. Recall that in Subsection~\ref{action_knot}, we defined the action of a strongly invertible symmetry on the knot Floer complex, under the assumption that the symmetry switches the basepoints. That definition is easily modified to define the action when the basepoints are fixed.

\begin{definition}\label{taufix}
 Let $(K,w,z)$ be a doubly-based knot with a strong involution $\tau_K$ that fixes the basepoints. The action of $\tau_K$ is defined as the composition:
\[
CFK^{\infty}(K,w,z) \xrightarrow{\rho} CFK^{\infty}(K,z,w) \xrightarrow{t_K} CFK^{\infty}(K^{r},z,w) \xrightarrow{sw} CFK^{\infty}(K,w,z).
\]
Here, as before $\rho$ represents the map induced by a half Dehn twist along the orientation of $K$ switching the $w$ and the $z$ basepoints. We will denote the action when the basepoint is fixed as $\tau^{\prime}_{K}$ (in order to distinguish from the previous definition, where the basepoints were switched).
\end{definition}
It can be checked that $\tau^{\prime}_{K}$ also satisfies the properties of a strongly invertible action as detailed in Proposition~\ref{strong}. Moreover, the earlier defined $\tau_{K}$ action is related to $\tau^{\prime}_{K}$ in the following way.
\begin{proposition}\label{taufixproof}
Let $K \subset S^{3}$ be any strongly invertible knot. We place $2$ pairs of basepoints $(w^{\prime}, z^{\prime})$ and $(w,z)$, on $K$ in Figure~\ref{rhoarc}, so that $(w,z)$ is switched under the involution and $(w^{\prime}, z^{\prime})$ is fixed. Let $\tau_K$ and $\tau^{\prime}_{K}$ denote the action of the symmetry on $CFK^{\infty}(K , w, z)$ and $CFK^{\infty}(K, w^{\prime}, z^{\prime})$ respectively. Then there exist a chain homotopy equivalence $\tilde{\rho}$ which intertwines with $\tau_{K}$ and $\tau^{\prime}_{K}$ up to homotopy:
\[
\tilde{\rho} \circ {\tau}^{\prime}_{K}  \simeq  {\tau}_{K} \circ \tilde{\rho}.
\] 
\end{proposition}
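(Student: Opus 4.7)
The plan is to define $\tilde{\rho}$ as a basepoint-moving chain homotopy equivalence and then verify the intertwining by expanding both actions into their constituent pieces. Concretely, I would choose an arc $\mu$ on $K$ connecting the fixed pair $(w',z')$ to the switched pair $(w,z)$, avoiding the two fixed points of $\tau|_{K}$. By the naturality results of Juh\'asz-Thurston-Zemke \cite{JTZ} together with Zemke's computations of basepoint-moving maps \cite{zemke2017quasistabilization}, the finger move along $\mu$ induces a chain homotopy equivalence $\tilde{\rho}: CFK^{\infty}(K,w',z') \to CFK^{\infty}(K,w,z)$, well defined up to chain homotopy.

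Expanding $\tau_{K}$ and $\tau'_{K}$ via Section~\ref{action_knot} and Definition~\ref{taufix}, the desired relation becomes an identity of compositions of three kinds of maps: push-forwards $t_{K}$, basepoint-switching maps $sw$, and basepoint-moving maps (namely $\tilde{\rho}$, $\rho$, and Heegaard-move equivalences $\Phi$). The maps $sw$ act tautologically on intersection points and commute strictly with any basepoint-moving map, so they pass through the composition without obstruction. What remains is a relation among the remaining pieces, which at the geometric level amounts to the claim that $\tau\mu$ is isotopic, in the complement of the basepoints, to the concatenation of the reverse arc $\bar\mu$ with the half-twist arc along $K$ that defines $\rho$. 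This isotopy exists because $\tau$ reverses the orientation of $K$ and the fixed points of $\tau|_{K}$ lie between the two basepoint pairs along $K$.

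Translating the arc isotopy to an algebraic statement is then a routine application of the diffeomorphism invariance of graph cobordism maps \cite[Theorem A]{Zemkegraph}: $t_{K}$ intertwines the finger move along $\mu$ with the finger move along $\tau\mu$, and the latter is chain homotopic to $\tilde{\rho}\circ\rho$ up to the Heegaard-move equivalence $\Phi$. Assembling these pieces yields $\tilde{\rho}\circ\tau'_{K}\simeq\tau_{K}\circ\tilde{\rho}$. The main obstacle is setting up the arcs in the complement of the basepoints so that the induced maps can be compared up to chain homotopy without extra Sarkar-type correction terms appearing; this is the same bookkeeping issue handled in the proof of Proposition~\ref{strongskewbasepoint}, and the same arc-tracking techniques should carry through.
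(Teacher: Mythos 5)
Your approach is essentially the same as the paper's: define $\tilde{\rho}$ as a basepoint-moving chain homotopy equivalence, decompose both $\tau_K$ and $\tau'_K$ into push-forward, switching, and finger-move pieces, and reduce the intertwining to an arc isotopy verified via naturality. The paper implements this by introducing three explicit arcs $\rho_1, \rho_2, \rho_3$ on $K$ (Figure~\ref{rhoarc}) and organizing the argument into a $2\times 3$ grid of commuting squares, taking $\tilde{\rho}=\rho_3$; the left square encodes the arc concatenation $\rho_3 \simeq \rho_2\circ\rho_1$, the middle square uses that $\rho_3$'s arc is the $\tau$-image of $\rho_2$'s arc (so that conjugating by $t_K$ lands tautologically on the right map), and the right square is the tautological commutation of $sw$ with a basepoint move.

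The one place your argument as written does not quite check out is the stated geometric claim that ``$\tau\mu$ is isotopic to the concatenation of the reverse arc $\bar\mu$ with the half-twist arc defining $\rho$.'' With $\mu$ the arc from $(w',z')$ to $(w,z)$, $\tau\mu$ has endpoints $(w',z')$ and $(z,w)$, whereas the proposed concatenation $\bar\mu\cdot\rho$ runs from $(w,z)$ to $(z',w')$; the endpoints do not match either way one orders the concatenation. The relation the argument actually needs — which is what the paper's $\rho_3 = \tau(\rho_2)$ together with $\rho_3\simeq\rho_2\circ\rho_1$ packages — is that $\tau\mu$ is isotopic to the concatenation of the \emph{reverse of the half-twist arc} with $\mu$ (equivalently $\mu\simeq\rho\cdot\tau\mu$), i.e.\ you have reversed the wrong arc. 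This is a local slip rather than a wrong approach: with the corrected arc relation, the algebraic translation via diffeomorphism invariance and tautological commutation of $sw$ proceeds exactly as you outline, and matches the paper's proof.
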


\begin{proof}
Let us define the basepoint moving isotopy maps $\rho_1$, $\rho_2$ and $\rho_3$ along the oriented arcs shown in Figure~\ref{rhoarc}. We push the basepoints along those arc following its orientation.  Note that the arc used to define $\rho_3$ is the image of $\rho_2$ under the involution.
\begin{figure}[h!]
\centering
\includegraphics[scale=1.2]{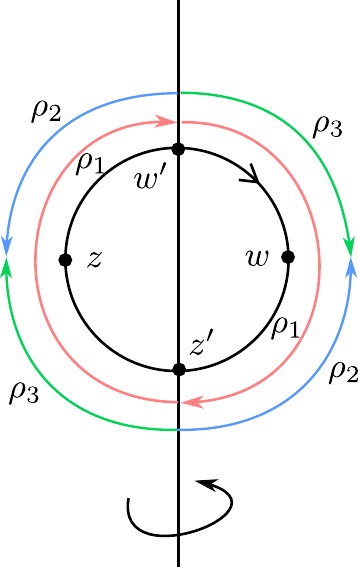} 
\caption
{The oriented arcs of $K$ used to define the finger moving iotopies. The arcs are shown as push offs. $\rho_1$, $\rho_2$ and $\rho_3$ are in red, blue and green respectively.}\label{rhoarc}
\end{figure}
The claim follows from the chain of commuting diagrams depicted below.
\[
\begin{tikzcd}[column sep =large,row sep=large ]
CFK(K,w^{\prime},z^{\prime}) \arrow{r}{\rho_1} \arrow{d}{\rho_3} &
  {CFK}(K,z^{\prime},w^{\prime})    \arrow{r}{t_K} \arrow{d}{\rho_2} &
 {CFK}(K^{r},z^{\prime},w^{\prime}) \arrow{r}{sw} \arrow{d}{{\rho}_3} &
  {CFK}(K^{r},w^{\prime},z^{\prime}) \arrow{d}{\rho_3}
\\
{CFK}(K,w,z) \arrow{r}{\mathrm{id}} &
 {CFK}(K,w,z) \arrow{r}{t_K} &
  {CFK}(K^{r},z,w) \arrow{r}{sw} & {CFK}(K,w,z)
\end{tikzcd}
\]
\noindent
The commutation for the left square follows from Figure~\ref{rhoarc}. The commutation for the middle and square and the right square follows tautologically. Hence we may take $\tilde{\rho}=\rho_3$.
\end{proof}

In \cite[Theorem 7.1]{OSknots} Ozsv{\'a}th and Szab{\'o} proved a connected sum formula for knot. In \cite[Proposition 5.1]{Zemkeconnected} Zemke re-proved the connected sum formula by considering \textit{decorated link cobordisms}. The latter formalism will be useful for us in the present computation. Although we have worked with the \textit{standard} knot Floer complex $CFK$ in this article, all the constructions readily generalize to the $\mathcal{CFK}$ version used by Zemke \cite{Zemkelinkcobord}. Both $CFK$ and $\mathcal{CFK}$ complexes essentially contain the same information. Instead of introducing $\mathcal{CFK}$, $\mathcal{CFL}$ and the link cobordism maps, we refer readers to \cite{Zemkelinkcobord} for an overview.

We are now in place to prove Theorem~\ref{KconnectKstrong}.

\begin{proof}[Proof of  Theorem~\ref{KconnectKstrong}]
We start with $K \sqcup \tau K^{r} \subset  S^{3} \sqcup S^{3}$. We then attach a $(1,1)$-handle, a copy of $(D^1 \times D^3, D^{1} \times  D^{1})$ to $(S^{3} \sqcup S^{3}, K \sqcup \tau K^{r}) \times I$ so that outgoing boundary of the trace of the handle attachment is $(S^{3}, K \# \tau K^{r})$. We refer to this cobordism from $S^{3} \sqcup S^{3}$ to $S^{3}$ as $W$ and the pair-of-pants surface embedded inside, obtained from the $1$-dimensional $1$-handle attachment as $\Sigma$. Moreover, we place the basepoints $(w,z)$ and $(\tau w, \tau z)$ in $K$ and $\tau K^{r}$ respectively. Then we decorate $\Sigma$ by $3$-arcs and color the region bounded by them as in Figure~\ref{decoration}. We also place two basepoints $w^{\prime}$ and $z^{\prime}$ on the outgoing end.  We denote this decorated cobordism as $\mathcal{F}$. Associated to $(W, \mathcal{F})$ there is a link cobordism map $G$ defined in \cite[Proposition 5.1]{Zemkeconnected}:
\[
G_{W,\mathcal{F}}: \mathcal{CFL}^{\infty}(K) \otimes \mathcal{CFL}^{\infty}(K^{r}) \rightarrow \mathcal{CFL}(K \# \tau {K}^{r})
\]

\begin{figure}[h!]
\centering
\includegraphics[scale=1]{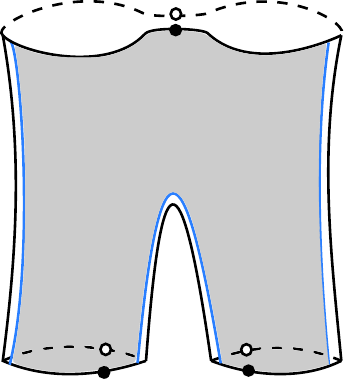} 

\caption
{The pair of pants cobordism with an equivariant decoration. The blue arcs represent the decorations. Black and white dots represent the $w$ and $z$-type basepoints respectively.}\label{decoration}
\end{figure}
\noindent
Now note that there is an obvious extension $f$ of $\tau$ on $(W,\Sigma)$. Schematically, this is given by reflecting along the vertical plane in the middle of the surface $\Sigma$. Note that although $f$ restricts to $\tilde{\tau}_{sw}$ on the outgoing end, it does not switch the basepoints $w^{\prime}$ and $z^{\prime}$ but it fixes them. In fact, for the choice of our decoration $\mathcal{F}$ on $\Sigma$, it is impossible to place any pair of basepoint on $K \# \tau {K}^{r}$ so that they are switched by $\tilde{\tau}_{sw}$. Hence for now, we will consider the action $\tilde{\tau}^{\prime}_{sw}$ on  $(K \# \tau {K}^{r}, w^{\prime}, z^{\prime})$ as defined earlier in Definition~\ref{taufix}.
We now claim that 
\begin{gather}\label{eq1}
G_{W,\overline{\mathcal{F}}} \circ \rho_1 \otimes \rho_2 + \rho_{\#} \circ G_{W,\mathcal{F}} \simeq G_{W, \overline{\mathcal{F}}} \circ (\Psi \rho_1 \otimes \Phi \rho_2).
\end{gather}
Here $\rho_1$ and $\rho_2$ and $\rho_{\#}$ denote the half-Dehn twist map along the orientation of the knot $K$, $\tau K^{r}$ and $K \# \tau K^{r}$ respectively. $\overline{\mathcal{F}}$ denotes the decoration conjugate to $\mathcal{F}$, i.e. obtained from switching the $w$ and $z$ regions of $\mathcal{F}$ and switching the basepoints $w$ and $z$-type basepoints. The proof of Relation~\ref{eq1} follows from the proof of \cite[Proposition 5.1]{Zemkeconnected} by applying the bypass relation to the disk shown in Figure~\ref{bypass}.
\begin{figure}[h!]
\centering
\includegraphics[scale=.7]{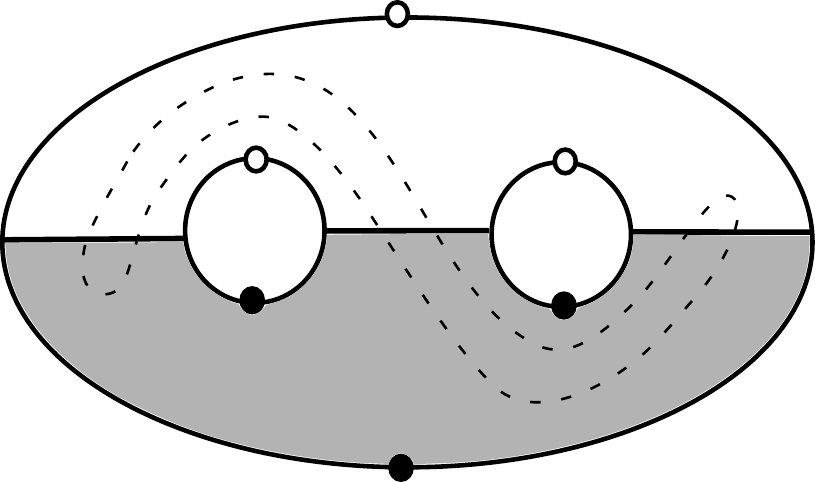} 
\caption
{Bypass disk used for Relation~\ref{eq1}.}\label{bypass}
\end{figure}
By post-composing Relation~\ref{eq1} with $sw \circ t$, we get
\begin{gather}\label{eq2}
sw \circ  t \circ G_{W,\overline{\mathcal{F}}} \circ \rho_1 \otimes \rho_2 +sw \circ  t \circ  \rho_{\#} \circ G_{W,\mathcal{F}} \simeq sw \circ  t \circ  G_{W, \overline{\mathcal{F}}} \circ (\Psi \rho_1 \otimes \Phi \rho_2)
\end{gather}
Let us now look at the following diagram which we claim commutes up to chain homotopy
\[\begin{tikzcd}
	{\mathcal{CFL}(K,z,w) \otimes \mathcal{CFL}( \tau K^{r},\tau z,\tau w)} && {\mathcal{CFL}(K \# \tau {K}^{r},z^{\prime},w^{\prime})} \\
	\\
	{ \mathcal{CFL}({K}^{r},z,w) \otimes \mathcal{CFL}(\tau K, \tau z, \tau w)} && {\mathcal{CFL}(K^{r} \# \tau {K},z^{\prime},w^{\prime} )} \\
	\\
	{\mathcal{CFL}(K, w,z) \otimes \mathcal{CFL}( \tau {K}^{r},\tau w, \tau z)} && {\mathcal{CFL}(K \# \tau {K}^{r}, w^{\prime}, z^{\prime})}
	\arrow["{G_{W,\overline{\mathcal{F}}}}", from=1-1, to=1-3]
	\arrow["t"', from=1-1, to=3-1]
	\arrow["{t}", from=1-3, to=3-3]
	\arrow["{sw \otimes sw}"', from=3-1, to=5-1]
	\arrow["sw", from=3-3, to=5-3]
	\arrow["G_{W,\mathcal{F}}"', from=5-1, to=5-3]
	\arrow["{G_{W,f(\overline{\mathcal{F}})}}"', from=3-1, to=3-3]
\end{tikzcd}\]
The diffeomorphism invariance of link cobordisms \cite[Theorem A]{Zemkelinkcobord} implies that the top square commutes up to chain homotopy. Now observe that $f$ fixes the $w$ and $z$-regions of $\mathcal{F}$ (due to the equivariant choice of the decoration). Hence the commutation of the lower square is tautological after switching the $w$ and $z$-type regions of $\overline{\mathcal{F}}$. Applying this in Relation~\ref{eq2}, we get 
\begin{gather}\label{eq3}
G_{W,{\mathcal{F}}} \circ sw \otimes sw \circ  t  \circ \rho_1 \otimes \rho_2  + sw \circ  t \circ  \rho_{\#} \circ G_{W,\mathcal{F}} \simeq G_{W,{\mathcal{F}}} \circ sw \otimes sw \circ  t  \circ (\Psi \rho_1 \otimes \Phi \rho_2)
\end{gather}
Next, we observe the following relations, stemming from the naturality of link cobordism maps:
\[
t \circ \Psi \otimes \Phi \simeq \Phi \otimes \Psi \circ t.
\]
The above relation follows since $t$ switches the two factors of the connected sum. Moreover, since the $sw$ map switches the basepoints $w$ and $z$, we have 
\[
sw \otimes sw \circ {\Phi \otimes \Psi} \simeq {\Psi \otimes \Phi} \circ {sw \otimes sw}.
\]
Applying these relations on the right-hand side of Relation~\ref{eq3}, we get
\begin{gather*}\label{eq4}
G_{W,{\mathcal{F}}} \circ sw \otimes sw \circ  t  \circ \rho_1 \otimes \rho_2  + sw \otimes sw \circ  t \circ  \rho_{\#} \circ G_{W,\mathcal{F}} \simeq G_{W,{\mathcal{F}}} \circ \Psi \otimes \Phi \circ sw \otimes sw \circ  t  \circ \rho_1 \otimes \rho_2 
\end{gather*}
Applying the definitions of the maps $\tilde{\tau}^{\prime}_{sw}$ and $\tilde{\tau}_{exch}$ from Subsection~\ref{KK} to Relation above, we get
\begin{gather}\label{eq5}
\tilde{\tau}^{\prime}_{sw} \circ  G_{W,\mathcal{F}}  \simeq  G_{W,\mathcal{F}} \circ 
(\mathrm{id} \otimes \mathrm{id} + \Psi \otimes \Phi) \circ \tilde{\tau}_{exch}.
\end{gather}
In Proposition~\ref{taufixproof}, we showed that there is chain homotopy equivalence $\tilde{\rho}$ which satisfies 
\[
\tilde{\rho} \circ \tilde{\tau}^{\prime}_{sw}  \simeq  \tilde{\tau}_{sw} \circ \tilde{\rho}
\] 
Hence, it follows that $\tilde{\rho} \circ G_{W, \mathcal{F}}$ intertwines with $\tilde{\tau}_{sw}$ and $(\mathrm{id} \otimes \mathrm{id} + \Psi \otimes \Phi) \circ \tilde{\tau}_{exch}$. Lastly, recall in \cite[Proposition 5.1]{Zemkeconnected}, it was shown that $G_{W, \mathcal{F}}$ is a homotopy equivalence. Since $\tilde{\rho}$ is a basepoint moving map, it admits a homotopy inverse. Hence $\tilde{\rho} \circ G_{W, \mathcal{F}}$ is also a homotopy equivalence. Hence, we chose $F=\tilde{\rho} \circ G_{W, \mathcal{F}}$, which completes the proof. 
\end{proof} 

\begin{remark}\label{compare}
A slightly different version of $\tilde{\tau}_{sw}$, was computed by Dai-Stoffregen and the author in \cite{abhishek2022equivariant}. Specifically, there the authors considered the connected sum $(K,w,z) \# (\tau K^{r}, \tau z, \tau w)$, while we do not flip the basepoints on $\tau K^{r}$, i.e. we consider $(K,w,z) \# (\tau K^{r}, \tau w,  \tau z)$. This implies that the involution on the disconnected end namely $\tilde{\tau}_{exch}$ was defined differently in \cite{abhishek2022equivariant}. The authors also compute of the $(\iota_K, \tau_K)$-connected sum formula for the switching involution \cite{abhishek2022equivariant}. For the present article, the behavior of the $\iota_K$ connected sum is rather regulated. Since $G_{W,\mathcal{F}}$ is exactly the one used in \cite[Proposition 5.1]{Zemkeconnected}, we at once obtain that
\[
\iota_{K \# \tau K^{r}} \circ G_{W,\mathcal{F}} \simeq  G_{W,\mathcal{F}} \circ 
(\mathrm{id} \otimes \mathrm{id} + \Phi \otimes \Psi) \circ \iota_K \otimes \iota_{\tau K^{r}}.
\]
by composing the above relation with $\tilde{\rho}$ on the left and observing that the push-forward map stemming from a finger moving diffeomorphism commute with $\iota_{K \# \tau K^{r}}$ (due to naturality), we get
\begin{gather}\label{iotaconnect}
\iota_{K \# \tau K^{r}} \circ F \simeq  F \circ 
(\mathrm{id} \otimes \mathrm{id} + \Phi \otimes \Psi) \circ \iota_K \otimes \iota_{\tau K^{r}}.
\end{gather}
\end{remark}

\begin{remark}
The appearance of half Dehn-twist map in $\tilde{\tau}_{exch}$ may give us the impression that $(\mathrm{id} \otimes \mathrm{id} + \Psi \otimes \Phi) \circ \tilde{\tau}_{exch}$ is not a homotopy involution but of order $4$ (following the intuition for $\iota_K$ maps), which would would apparently contradict the conclusion of Theorem~\ref{KconnectKstrong} (in light of Proposition~\ref{strong}). However, this is not the case. We leave it to the readers to verify that  $(\mathrm{id} \otimes \mathrm{id} + \Psi \otimes \Phi) \circ \tilde{\tau}_{exch}$ is indeed a homotopy involution. This is reminiscent of Proposition~\ref{strongskewbasepoint}.
\end{remark}

\subsection{Periodic involution on Thin knots} We now move on to compute periodic actions on knots that are \textit{Floer homologically thin}. As mentioned in the Introduction, there is a vast collection of thin knots that admit a periodic involution. In \cite{Petkova}, Petkova characterized the $CFK^{\infty}$ complexes of thin knots (up to homotopy equivalence). These complexes consist of direct sum of  \textit{squares} and a single \textit{staircase} of step length $1$. We will refer to such complexes for thin knots as \textit{model complexes}. Examples of such model $CFK^{\infty}$ complexes are shown in Figure~\ref{thinmodels1} and Figure~\ref{thinmodels2}.  


In \cite{HM}, Hendricks and Manolescu showed that it is possible to uniquely (up to grading preserving change of basis) determine the action of the $\spinc$-conjugation $\iota_{K}$, on for any thin knot $K$. As we will see in a moment the analogous situation for the periodic knots is more complicated. See Remark~\ref{thinremark}. We start with the following definition:

\begin{definition}
We will refer to an action $\tau_{0}$ on a knot Floer complex $CFK^{\infty}(K)$ as a \textit{periodic type} action if $\tau_0$ is a grading preserving, $\mathbb{Z} \oplus \mathbb{Z}$-filtered automorphism that squares to the Sarkar map $\varsigma$. 
\end{definition}
\noindent
We make a few observations for periodic type actions. In \cite{OStau}, the authors defined the concordance invariant $\tau$.\footnote{Not to be confused with an involution, for which we have been using the same notation.} This is relevant for us since for thin knots, the Maslov grading of a generator $[\x, i, j]$ is $i+j-\tau$. Now assume that, $\tau_{0}$ is a periodic type automorphism on $CFK^{\infty}(K)$. Hence $\tau_{0}$ is filtered, this implies $[\bold{x},i,j]$ is send to $[\tau_{0}(\bold{x}),i^{\prime}, j^{\prime}]$ where $i^{\prime} \leq i$ and $j^{\prime} \leq j$. Coupled with the fact that $\tau_{0}$ preserves the Maslov grading, we get $\tau_{0}(\bold{x})$ lies in the same diagonal line as $\bold{x}$ which implies $i^{\prime} = i$ and $j^{\prime} = j$.

On the other hand, we have $\tau^{2}_{0} \simeq \varsigma$. The Sarkar map $\varsigma$ was computed in \cite{sarkar2015moving}, in particular the map known to be identity for staircases. For square complexes $B_{i}=\{ a_i,b_i,c_i, U e_i \}$ as shown in Figure~\ref{square}, the map takes the form indicated below,
\[
\varsigma(a)= a+e, \;
\varsigma(b)=b, \;
\varsigma(c)=c, \;
\varsigma(e)=e. 
\]
\begin{figure}[h!]
\centering
\includegraphics[scale=1.3]{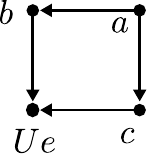} 

\caption
{A box complex.}\label{square}
\end{figure}
We typically refer to the generator $a$ as the \textit{initial vertex}/\textit{corner} of a box. 
\noindent
We will now see that these constraints on $\tau_{0}$ will be enough to determine it uniquely for a certain class of knots. We specify the class below,

\begin{definition}
We refer to a thin knot as \textit{diagonally supported} if the model complex of the knot does \textit{not} have any boxes outside of the main diagonal. 
\end{definition}
\noindent
For example, the figure-eight knot and the Stevedore knot are both diagonally supported, see Figure~\ref{figureeightcomplex}. 

\begin{figure}[h!]
\centering
\includegraphics[scale=.9]{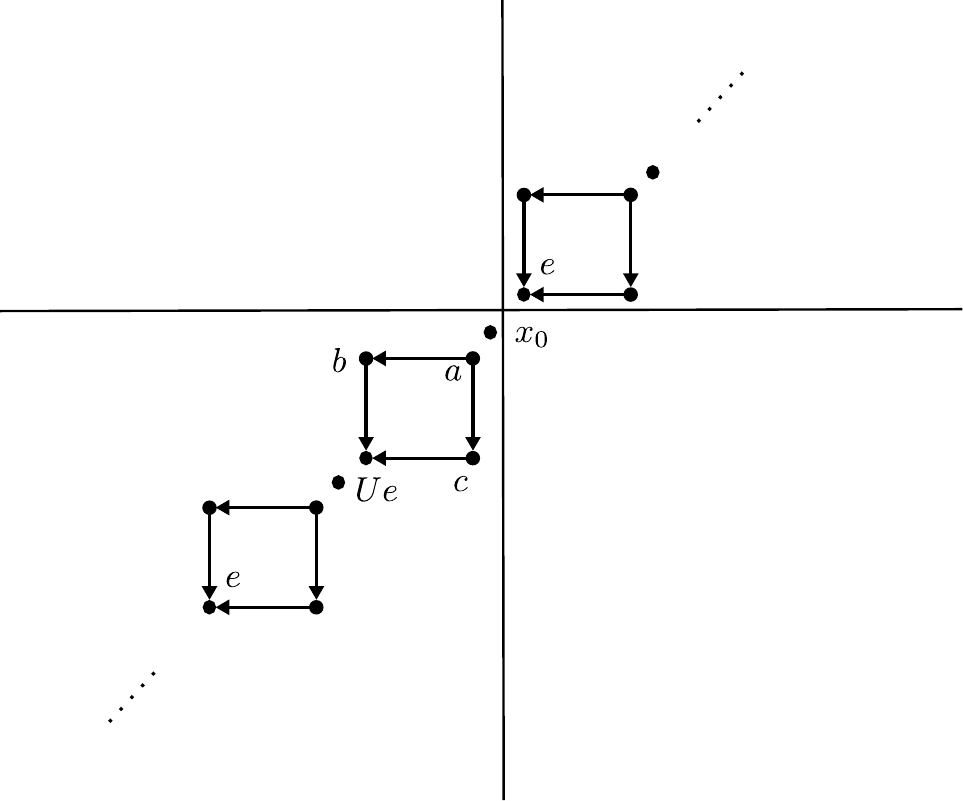} 

\caption
{$CFK^{\infty}(4_1)$, a model complex of a diagonally supported thin knot.}\label{figureeightcomplex}
\end{figure}
\noindent
We now state the following computational result, which is similar to its involutive counterpart:
\begin{lemma} \label{thin}
Let $K$ be a diagonally supported thin knot, then up to grading preserving change of basis, there is at most one periodic type automorphism of $CFK^{\infty}(K)$. 
\end{lemma}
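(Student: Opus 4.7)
The plan is to parametrize all periodic-type automorphisms $\tau_0$ of $CFK^{\infty}(K)$ and show any two are related by a $U$-equivariant grading-preserving change of basis. The first ingredient is a rigidity observation: for a thin knot the Maslov grading of every generator $[\x,i,j]$ is $i+j-c$ for a single constant $c$, so any grading-preserving $(\mathbb{Z}\oplus\mathbb{Z})$-filtered map must preserve the bifiltration exactly (it cannot decrease $i$ or $j$ without also decreasing $i+j$, i.e.\ the Maslov grading). Hence $\tau_0$ restricts at each bifiltration position $(p,q)$ to a $\mathbb{Z}_2$-linear automorphism of the finite-dimensional space of generators at $(p,q)$.

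Next I would fix Petkova's model $CFK^{\infty}(K)\simeq S\oplus\bigoplus_i B_i$, where $S$ is a staircase and each box $B_i=\langle a_i,b_i,c_i,e_i\rangle$ satisfies $\partial a_i=b_i+c_i$, $\partial b_i=\partial c_i=Ue_i$, $\partial e_i=0$. The diagonal-support hypothesis puts every initial vertex $a_i$ (and hence $e_i$) on the main diagonal, so that $b_i$ and $c_i$ sit one step off. Consequently the generators at each off-diagonal bifiltration position are of type $b$ or $c$ only, coming from boxes whose initial vertex lies one diagonal step away, while each diagonal position carries the staircase generators at that level together with the $a$'s, $e$'s, and $U$-translates of $e$'s from a finite collection of nearby boxes. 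Writing $\tau_0$ as a matrix on each such finite-dimensional piece, the chain-map condition $\tau_0\partial=\partial\tau_0$ combined with invertibility forces the $b$- and $c$-matrices to be the identity, so $\tau_0(b_i)=b_i$ and $\tau_0(c_i)=c_i$; then $U\tau_0(e_i)=\tau_0(Ue_i)=\tau_0(\partial b_i)=\partial\tau_0(b_i)=Ue_i$ gives $\tau_0(e_i)=e_i$.

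On the diagonal generators I would then write
\[
\tau_0(s_k)=s_k+\sum_j\lambda_{k,j}\,e_j,\qquad \tau_0(a_i)=a_i+\sum_k\mu_{i,k}\,s_k+\sum_j\nu_{i,j}\,e_j,
\]
with sums restricted to the same bifiltration level. The coefficient of $s_k$ in $\tau_0(s_k)$ and of $a_i$ in $\tau_0(a_i)$ are forced to equal $1$ by the Sarkar condition $\tau_0^2\simeq\varsigma$, using $\varsigma(s_k)=s_k$, $\varsigma(a_i)=a_i+e_i$, $\varsigma(e_i)=e_i$, and the remaining relations from $\tau_0^2\simeq\varsigma$ cut the free parameters down to a small explicit set. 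Grading-preserving $U$-equivariant bifiltered chain isomorphisms of the form $s_k\mapsto s_k+\sum\tilde{\lambda}_{k,j}e_j$ and $a_i\mapsto a_i+\sum\tilde{\nu}_{i,j}e_j$ (which are automatically chain maps since each $e_j$ is a cycle of matching bifiltration and Maslov grading) conjugate $\tau_0$ to a canonical form, normalizing all remaining parameters.

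The main technical obstacle is the bookkeeping when several box initial vertices or staircase steps land at the same diagonal bifiltration position: the matrices involved can be large and the equations coupling the $\lambda$'s, $\mu$'s, $\nu$'s via $\tau_0^2\simeq\varsigma$ become intertwined. The diagonal-support hypothesis is precisely what prevents the $b$- and $c$-corners of off-diagonal boxes from contaminating the analysis at diagonal positions; without it, those extra corners would land at diagonal positions and introduce mixing terms not eliminable by a grading-preserving change of basis, which is exactly why the uniqueness statement is restricted to the diagonally supported case.
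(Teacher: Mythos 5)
Your opening observation --- that for a thin knot Maslov grading is $i+j$ up to a constant, so a grading-preserving filtered map must preserve the bifiltration level exactly --- is correct. But the claim that ``the chain-map condition $\tau_0\partial=\partial\tau_0$ combined with invertibility forces the $b$- and $c$-matrices to be the identity'' is false, and this is where the proof breaks. The chain-map condition only forces the $b$-matrix, the $c$-matrix, and the $e$-matrix (and the box part of the $a$-matrix) to coincide; any invertible matrix over $\mathbb{Z}_2$ is allowed. Concretely, if two boxes $B_1,B_2$ have initial corners at the same bifiltration level, the automorphism swapping $B_1\leftrightarrow B_2$ and fixing everything else is grading-preserving, bifiltered, $U$-equivariant, and squares to the identity, which is chain homotopic to the Sarkar map box-by-box. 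So the swap is a legitimate periodic-type automorphism that is excluded by your ansatz $\tau_0(a_i)=a_i+(\textrm{lower terms})$. (A secondary error: ``off-diagonal positions carry only $b$- or $c$-type generators'' overlooks the off-diagonal staircase generators $x^1_s,x^2_s$ when $\tau(K)\neq 0$; the paper's final formulas include $\tau_0(b)=b+x^1_1$ precisely because $b$ and $x^1_1$ share a bifiltration level.)

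Handling the box-mixing phenomenon is the heart of the paper's proof, not a side bookkeeping issue. The paper first identifies the boxes whose initial corner is sent to (a sum containing) a \emph{different} initial corner, pairs them, and shows by an explicit change of basis that each such pair splits off as a $\tau_0$-invariant direct summand; it then argues that at most one ``unpaired'' box can remain, and only at that stage does a parametrization-plus-normalization argument of the kind you sketch become applicable (with the added staircase interaction noted above). Your proposal jumps directly to that final stage and so misses an entire class of periodic-type automorphisms; the uniqueness claim is therefore not established.
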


\begin{proof}

Let $\tau_0$ be one such periodic type action. By $\partial_{\textrm{horz}}$ and $\partial_{\textrm{vert}}$ we denote the horizontal and vertical component of the differential $\partial$. It is clear that for any $\tau_{0}$ satisfying the hypothesis we have $\partial_{\textrm{horz}} \tau_{0} = \tau_{0} \partial_{\textrm{horz}}$ and $\partial_{\textrm{vert}} \tau_{0} = \tau_{0} \partial_{\textrm{vert}}$. This also implies $\partial_{\textrm{vert}} \partial_{\textrm{horz}} \tau_{0} = \tau_{0} \partial_{\textrm{vert}} \partial_{\textrm{horz}}$. Let  $\{B_{s}=\{ a_s,b_s,c_s, U e_s \} \}$ be the collection boxes that appear in the  main diagonal. Firstly, note that the for an initial vertex $a_s$, the image $\tau_{0} (a_s)$ must contain an initial vertex from one such box. This is because initial vertices $a_s$ are the only generator $y$ in the chain complex with the property that $\partial_{\textrm{vert}} \partial_{\textrm{horz}}(y) \neq 0$. Let $\{B_{s_{i}} \}$ be the subcollection of boxes from $\{ B_{s} \}$, for which $\tau (a_{s_{i}})$ contains at least one other initial vertex $a_{s_{j}}$ of a different box ($i \neq j$). We will now argue that all the squares in $\{ B_{s_{i}} \}$ can be paired so that they are related by $\tau_{0}$, allowing them to split off from the chain complex. Explicitly, let $\{a,b,c, Ue \}$ be a box in $\{ B_{s_{i}} \}$. Let $a^{\prime}$ be one of the initial corners that appears in $\tau(a)$. We will now change basis by sending $a^{\prime}$ to $\tau_{0} (a)$, $\partial_{\textrm{vert}}(a^{\prime})$ to $\tau_{0} (c)$ , $\partial_{\textrm{horz}}(a^{\prime})$ to $\tau_{0} (b)$ and $\partial_{\textrm{vert}} \partial_{\textrm{horz}}(a^{\prime})$ to $\tau_{0} (U e)$. Now let us denote the direct sum of the two boxes $\{a,b,c, Ue \}$ and $\{\tau_{0}(a),\tau_{0} (b), \tau_{0} (c), \tau_{0} (Ue) \}$ as $C_1 \otimes \mathbb{Z}_{2}[U,U^{-1}]$. We see that $\tau_{0}$ is an automorphism of the direct sum. For example, we have,
\[
\tau_0(\tau_{0}(a))=a + e, \; \tau_{0} (\tau_{0} (b))=b, \; \tau_{0} (\tau_{0} (c))=c, \; \tau_{0} (\tau_{0} (e))=e.
\]
Here we have used the definition of the Sarkar map on the squares. We would now like to split off  $C_1 \otimes \mathbb{Z}_{2}[U,U^{-1}]$ from $CFK^{\infty}(K)$. As we already showed that $\tau_{0}$ preserves such direct sum, we are left to show that $\tau_{0}$-image of no other generator of $CFK^{\infty}(K)$ (lying outside $C_1 \otimes \mathbb{Z}_{2}[U,U^{-1}]$) lands in $C_1 \otimes \mathbb{Z}_{2}[U,U^{-1}]$. 

To achieve this, we first show that if there is a generator $x$ outside $C_1$ such that $\tau_{0}(x)$ contains either $a$ or $\tau_{0} (a)$ (or both), then it is possible to change basis so that $\tau_{0} (x)$ no longer contains either of those elements.  We show this by considering separate cases. 

\begin{itemize}

\item Firstly, let $\tau_{0}(x) = a + m$, where $m$ is a sum of basis elements none of which are $a$. Now if none of the basis elements in $m$ are $\tau_{0}(a)$, we change basis by sending $x$ to $x+ \tau_{0}(a)$. Then $\tau_{0}(x)$ will not contain either $a$ or $\tau_{0}(a)$.

\item Now suppose, $\tau_{0}(x) = \tau_{0}(a) + m$, where $m$ as a sum of basis elements does not contain $\tau_{0}(a)$. Now note that $\tau^{2}_{0}(x)$ does not have $\tau_{0}(a)$ (when written as a sum of basis elements). This is because it equals to the Sarkar map $\varsigma$ and $\varsigma(x)= x \; \mathrm{or} \; x + U{e_{k}}$, for some $e_{k}$. So there is an even number of basis elements $z_i$ in $m= z_1 + z_2 + \cdots + z_n$ such that $\tau_{0}(z_i)$ contains $\tau_{0}(a)$. We now  simultaneously change any basis elements $y_i$ (outside $C_1$) to $y_i + a$, if $\tau_{0}(y_i)$ contained $\tau_{0}(a)$. In particular, $x$ is changed to $x+a$. Hence, $\tau_{0}(x)$ no longer contains $\tau_{0}(a)$. Moreover, $\tau_{0}(x)$ also can not contain $a$. Since if one of the $z_i$ was $a$, then there would be an odd number of $z_i$ (without counting $a$), which would go through the change of basis, thereby eliminating $a$ in the new basis. If none of $z_i$ was $a$, then after change of basis there is still no $a$ in $m$.

\end{itemize}
\noindent
By repeated application of the above steps, we can ensure that there are no basis element $x$ outside $C_1$, such that $\tau_{0}(x)$ contain $b$ or $\tau_{0}(b)$ and so on. Hence, we can split off  $C_1 \otimes \mathbb{Z}_{2}[U,U^{-1}]$ from the chain complex, and be left with a chain complex where number of boxes  $\{ B_{s_{i}} \}$ is precisely smaller by 2. After repeating this process, we can ensure that $\{ B_{s_{i}} \}$ is empty. So we are now left with boxes in $\{ B_{s} \}$ with initial corner $a_s$ such that $\tau_{0} (a_s)$ only contains the initial corner $a_s$. 

We now show that there can be at most one such box. We argue similarly as in \cite{HM}. By multiplying with $U$ if necessary, we can assume all the initial corners of the boxes left are in the grading $(0,0)$. Let the boxes be enumerated as $B_1$, $B_2 \cdots$, $B_p$. This implies $\{a_1, a_2, \cdots , a_p, e_1 , e_2, \cdots, e_p, x_0 \}$ are the only generators in $(0,0)$. Note that $\tau_{0}(e_i)=e_i$. We now claim that $\tau_{0}(a_j)$ must contain $x_{0}$ non-trivially, for any $j \in \{ 1, 2, \cdots, \}$. If not, then we can write 
\[
\tau_{0}(a_j) = a_j + \sum_{i=1}^{p} n_{i} e_{i},
\]
where $n_{i} \in \{ 0, 1 \}$. This implies $\tau_{0}^{2}(a_j)=a_j$, contradicting that $\tau_{0}^{2}$ is equal to the Sarkar map. Hence we get,
\[
\tau_{0}(a_j) = a_j + \sum_{i=1}^{p} n_{i} e_{i} + x_{0}.
\]
Which implies that
\begin{align*}
\tau_{0}(x_0)&= a_j + \sum_{i=1}^{p} n_{i} e_{i} +  x_{0} +  \sum_{i=1}^{p} n_{i} \tau_{0} (e_{i}) + a_{j} + e_{j} \\
&= x_{0} + e_j.
\end{align*}
This is a contradiction when $p \geq 2$, since $j \in \{ 1, 2,\cdots, p \}$ was arbitrary.
We are now in a case where we have at most one box in the main diagonal along with the staircase. In the case where there is only a staircase with no boxes, the $\tau_{0}$ action is identity, see Proposition~\ref{lspaceperiodic}. In the case where there is one box with an initial corner at $(0,0)$ with a staircase, we will need to show that there can be at most one $\tau_{0}$ action, up to change of basis. We now consider 4-cases depending on the nature of the stair and the placement of the box, see Figure~\ref{thinmodels1} and Figure~\ref{thinmodels2}. Note that irrespective of the case, using the constraints from Sarkar map we can show $\tau_{0}$ must take one of these two possible values,
\[
\tau_{0}(a)=a+x_0, \; \tau_{0}(x_0)=x_0 + e , \; \tau_{0}(e)=e.
\]
\[
\tau_{0}(a)= a+ x_0 + e, \; \tau_{0}(x_0)=x_0 + e , \; \tau_{0}(e)=e.
\]
However, upon changing the basis by sending $x_0$ to $x_0 + e$ one can see that these two apparently different actions are equivalent. Finally, using the various constraints again, it is easily checked that for Case-I and II, $\tau_{0}$ acts as 
\[
\tau_{0} (b)= b + x^{1}_1, \; \tau_{0} (c)= c + x^{2}_{1}, \; \tau_{0}(x^{i}_{s})=x^{i}_{s}.
\]
And for Case-III and IV $\tau_{0}$ acts as 
\[
\tau_{0} (x^{2}_{1}) = x^{2}_{1} + U^{-1} c , \; \tau_{0} (x^{1}_{1}) = x^{1}_{1} + U^{-1} b , \; \tau_{0} (b)=b, \; \tau_{0}(c)=c, \; \tau_{0} (x^{1}_{s}) = x^{1}_{s}, \; \tau_{0} (x^{2}_{s}) = x^{2}_{s} \; \textrm{for} \; s > 1.
\]
This completes the proof.
\begin{figure}[h!]
\centering
\includegraphics[scale=1]{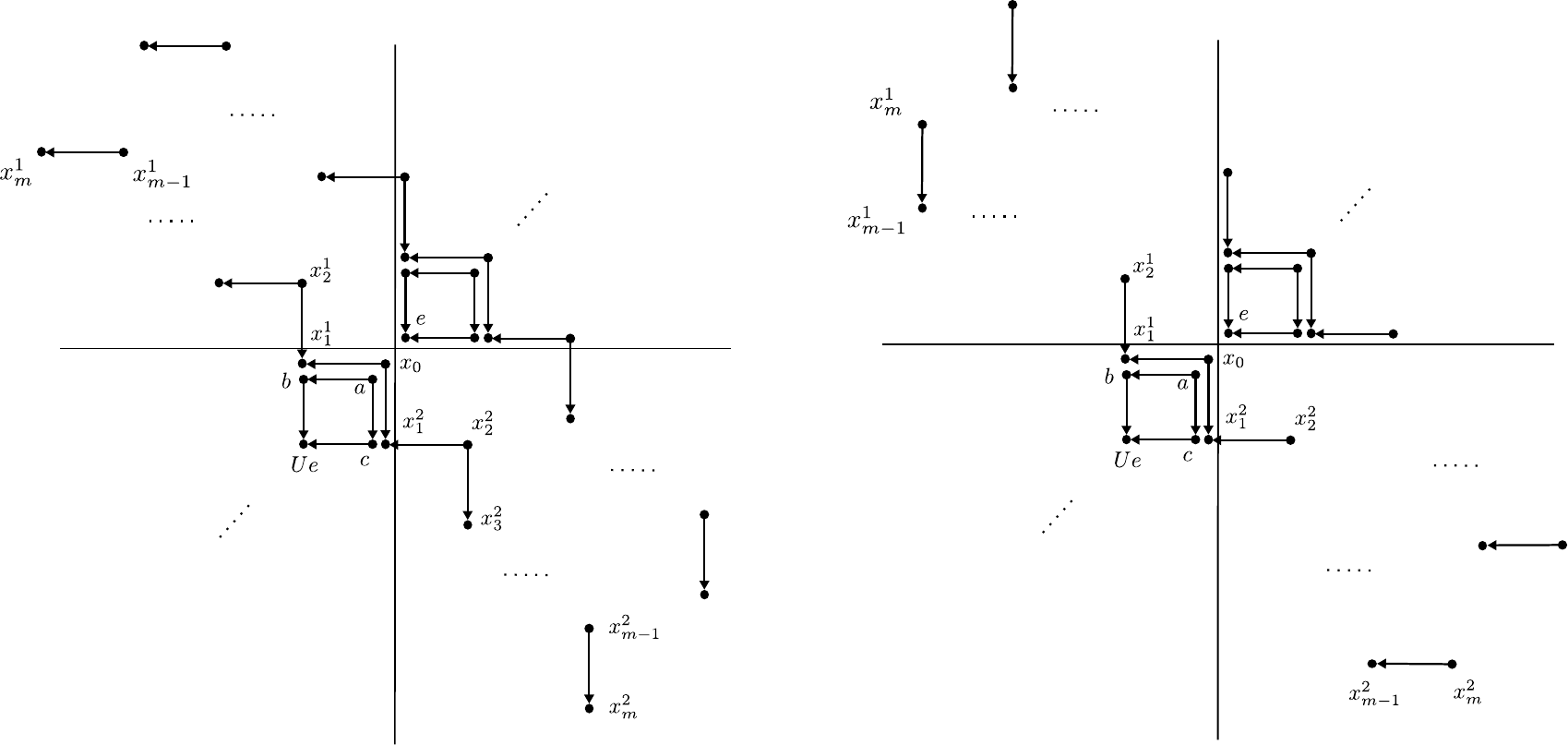} 

\caption
{Thin knot models complex, left: Case-I, right: Case-II.}\label{thinmodels1}
\end{figure}

\begin{figure}[h!]
\centering
\includegraphics[scale=1]{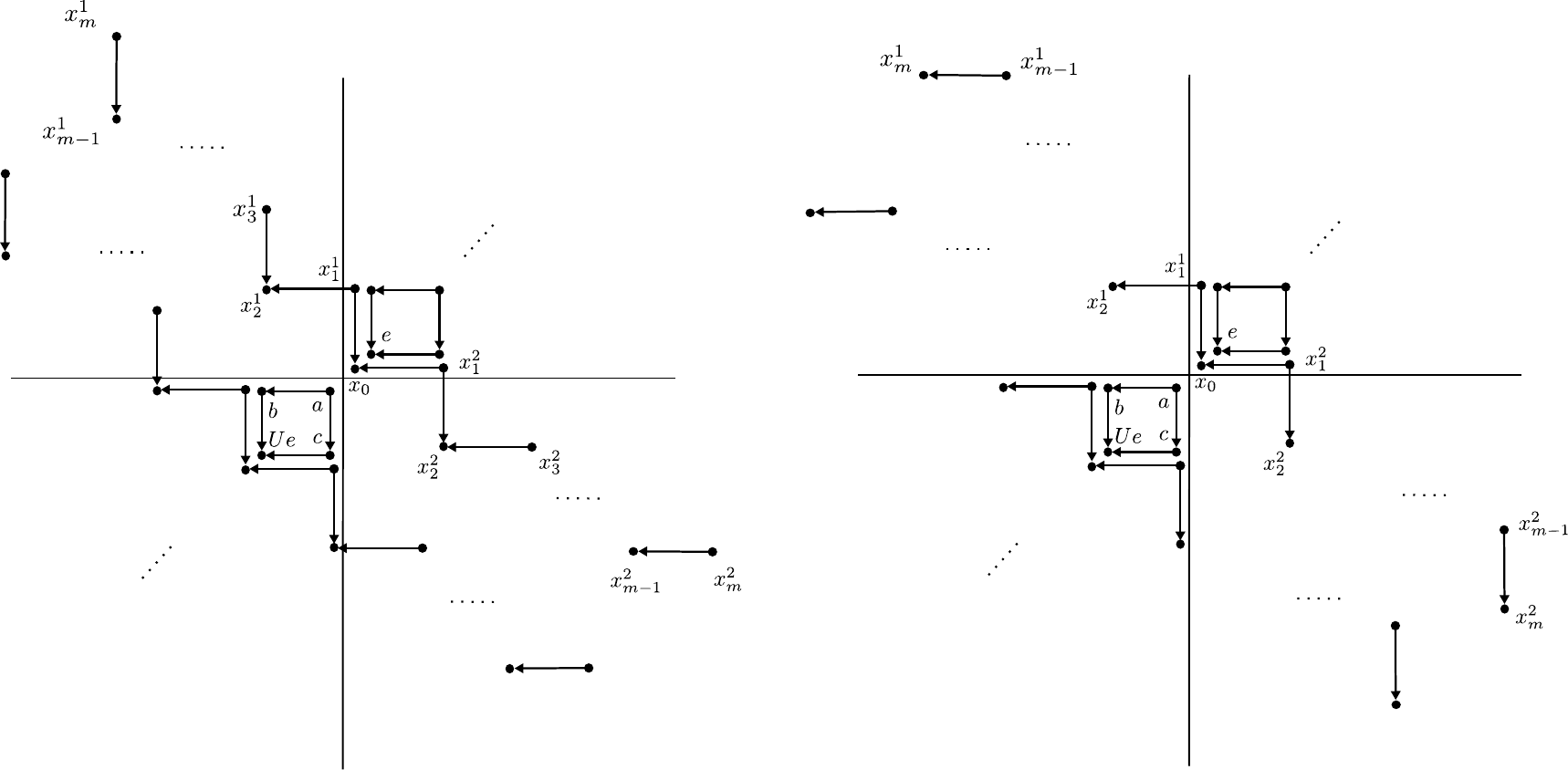} 

\caption
{Thin knot models complex, left: Case-III, right: Case-IV}\label{thinmodels2}
\end{figure}

 \end{proof}

\begin{remark}\label{thinremark}
Note that, despite being similar in nature to the $\iota_K$ action, we are unable to determine whether periodic type action is unique for all thin knots. This is because if there is a box $B$ outside of the main diagonal in grading $(i,j)$, $\iota_K$ action pairs $B$ with a box in grading $(j,i)$, and then the two boxes are split off from the chain complex. This process is repeated until there are no boxes left outside of the main diagonal. However, in the case of periodic type action, $\tau_{0}$ maps the initial corner of $a$ of $B$ to generators in the $(i,j)$ grading. In particular, $\tau_{0}(a)$ may not contain an initial corner of a different box (which is guaranteed for $\iota_K(a)$), making it harder to split off from the complex. Motivated from above, We pose the following question.
\begin{question}
Do there exist periodic thin knots with two periodic actions on $CFK^{\infty}$, such that the actions are not conjugate to each other via a change of basis? 
\end{question}
\noindent
Currently, the author does not have any conjectured example for the above. Although, one may expect to use equivariant cobordism techniques used in \cite{dai2020corks} to distinguish two periodic involutions by looking at the local equivalence class of large surgeries.
\end{remark}

\begin{proof}[Proof of Theorem~\ref{thin_periodic}]
Follows from Lemma~\ref{thin}.
\end{proof}

\section{Concordance invariants}\label{concordanceinvariant}

In this section we will define two concordance invariants arising from the strong symmetry $\tilde{\tau}_{sw}$ on the knot $K \# K^{r}$ defined in Subsection~\ref{KK}.\footnote{Here and in the subsequent sections, we abbreviate $\tau K^{r}$ to $K^{r}$.} Note that in Theorem~\ref{KconnectKstrong}, we already computed the strong involution $\tilde{\tau}_{sw}$. Furthermore, recall that in Section~\ref{correction} we defined two quantities $\underline{d}_{\tau}$, $\bar{d}_{\tau}$. We now show that these invarinats can also be used to produce concordance invariants.

\begin{proof}[Proof of Theorem~\ref{concordance}]
Let $K_1$ and $K_2$ be two knots in $S^{3},$ that are concordant to each other. Let $p$ be an odd integer such that $p \geq \textrm{max} \{2g(K_1), 2g(K_2) \}$ (here $g(K_i)$ represents the $3$-genus of the knot $K_i$). By Theorem~\ref{t1} and Corollary~\ref{corollary}, we then have $HFI^{+}_{\tau}(S^{3}_{p}(K),[0]) \cong H_{*}(AI_{0}^{+,\tau})$.
 and $HFI^{+}_{\iota \tau}(S^{3}_{p}(K),[0]) \cong H_{*}(AI_{0}^{+,\iota \tau})$. We now claim that by Montesinos trick  $S^{3}_{p}(K_i \# K^{r}_i)$ is a double branched cover of a cable $C(K_{i})$ of the knot $K_i$, where the covering involution correspondence to the strong involution on $K_i \# K^{r}_i$; see for Example \cite{Montesinos1975}, \cite[Section 1.1.12]{savelievinvariants}. Now since $K_{1}$ and $K_{2}$ are concordant, so is $C(K_{1})$ and $C(K_{2})$. Which in turn implies that their double branched covers are $\mathbb{Z}_{2}$-homology cobordant and the covering involution extend over the cobordism (since the cobordism is obtained by taking the double branched cover of the concordance). The conclusion follows from Theorem~\ref{dinvariant}. 

\end{proof}

\begin{remark}\label{knotconcordanceremark}
These invariants can be considered as a part of a more general set of invariants in the following sense: Given a knot $K$, consider the double branch cover $\Sigma_{2}(P(K))$ of a satellite knot $P(K)$ with companion $K$. Then  $\underline{d}_{\tau}(\Sigma_{2}(P(K))$ and $\bar{d}_{\tau}(\Sigma_{2}(P(K))$ are invariants of the concordance class of $K$. However, computing these invariants might be challenging. For the present case, we were able to identify $\Sigma_{2}(P(K))$ to surgery on a strongly invertible knot and make use of the equivariant surgery formula in conjunction with the computation of the action of the symmetry on $CFK^{\infty}$. See Example~\ref{KconnectKcompute} for a computation.
\end{remark}
\begin{figure}[h!]
\centering
\includegraphics[scale=1]{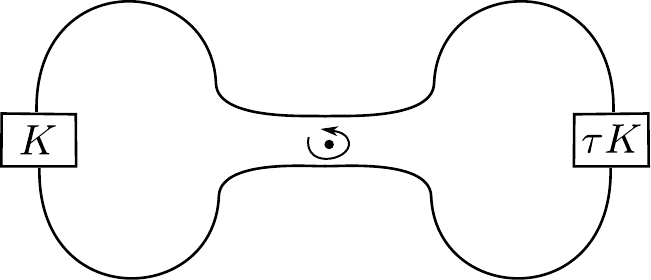} 

\caption
{The periodic involution on the knot $K \# \tau K$.}\label{KconnectKperiod}
\end{figure}
\begin{remark}
Instead of considering the strong involution on $K \# K^{r}$, one might consider the obvious periodic involution $\tau^{p}_{0}$ on $K \# K$, shown in Figure~\ref{KconnectKperiod}. This action was already computed by Juh{\'a}sz,  and Zemke in \cite[Theorem 8.1]{juhasz2018stabilization}. Using a slightly different argument it is possible to show that $\underline{d}_{\tau}(S^{3}_{p}(K \# K), \tau^{p}_{0})$, $\bar{d}_{\tau}(S^{3}_{p}(K \# K), \tau^{p}_{0})$ are also concordance invariants. 
\end{remark}

\section{Examples}\label{computation}
In this Section, we give explicit computation for the action of symmetry on $CFK^{\infty}$ for two classes of symmetric knots. This in turn leads to computations of $\underline{d}_{\tau}$ and $\bar{d}_{\tau}$ invariants.  But before moving ahead, for ease of computation, we would like to show that we can use any chain complex filtered homotopic to that coming from $CFK^{\infty}(\mathcal{H}_{K})$ for a given $\H_K$. This is similar to the observation made in \cite{HM}.

\begin{definition}\cite{HM}
A tuple $\mathfrak{C}_{p}=(C,\mathcal{F},M, \partial, \tau_{p})$ is called $CFKI_{\tau_{p}}$ data if $C$ is a free, finitely generated $\mathbb{Z}_2[U,U^{-1}]$ module with a $\mathbb{Z}$ grading $M$, and a $\mathbb{Z} \oplus \mathbb{Z}$-filtration $\mathcal{F}(i,j)=(i,j)$. The module $C$ is further equipped with a differential $\partial$ which preserves the filtration $\mathcal{F}$ and decreases the grading $M$ by $1$. Finally, $\tau_{p}$ is a grading preserving filtered quasi-isomorphism on $C$. Similarly one can define a tuple $\mathfrak{C}_{s}=(C,\mathcal{F},M, \partial, \tau_{s})$ with exact same properties except in this case $\tau_{s}$ is a \textit{skew}-filtered.
 \end{definition}
With this in mind one can define a notion of \textit{quasi-isomorphism} between two such  $CFKI_{\tau_{p}} $-data (respectively $CFKI_{\tau_{s}}$) by following \cite{HM}, leading to the following Proposition:

\begin{proposition}\cite[Lemma 6.5]{HM}\label{transfer}
Given a $CFKI_{\tau_{p}}$ data $\mathfrak{C_{p}}=(C,\mathcal{F},M, \partial, \tau_{p})$-data, let $(C^{\prime},\mathcal{F}^{\prime},M^{\prime},\partial^{\prime})$ be another chain complex but without the quasi-isomorphism $\tau^{\prime}_{p}$, which is filtered homotopy equivalent to $(C,\mathcal{F},M, \partial)$. Then there is a grading preserving filtered quasi-isomorphism $\tau^{\prime}_{p}$ on $C^{\prime}$ such that $\mathfrak{C}_{p}$ and $(C^{\prime},\mathcal{F}^{\prime},M^{\prime},\partial^{\prime},\tau^{\prime}_{p})$ are quasi-isomorphic as $CFKI_{\tau_{p}}$-data. 
Analogous conclusions also holds for the $CFKI_{\tau_{s}}$-data.
\end{proposition}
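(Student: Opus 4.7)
The plan is to transfer the involutive action along the filtered homotopy equivalence by conjugation. Let $f : (C,\mathcal{F},M,\partial) \to (C',\mathcal{F}',M',\partial')$ and $g : (C',\mathcal{F}',M',\partial') \to (C,\mathcal{F},M,\partial)$ be grading preserving, filtered chain maps realizing the given filtered homotopy equivalence, so that $g \circ f \simeq_{F} \mathrm{id}_{C}$ and $f \circ g \simeq_{F} \mathrm{id}_{C'}$ via filtered chain homotopies $H$ and $H'$. Define
\[
\tau'_{p} \ :=\ f \circ \tau_{p} \circ g \ :\ C' \longrightarrow C'.
\]

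Next I would verify the required properties of $\tau'_p$: it is grading preserving because $f$, $g$, and $\tau_p$ all preserve the homological grading $M$; it is filtered because composition of filtered maps is filtered; and it is a quasi-isomorphism because it is a composition of three quasi-isomorphisms. I would then check the intertwining relations that realize the pair $(\mathfrak{C}_{p},\mathfrak{C}_{p}')$ as a quasi-isomorphism of $CFKI_{\tau_{p}}$-data: using $g \circ f \simeq_{F} \mathrm{id}_{C}$ and $f \circ g \simeq_{F} \mathrm{id}_{C'}$ together with the definition of $\tau'_{p}$, one has
\[
\tau'_{p} \circ f \ =\ f \circ \tau_{p} \circ g \circ f \ \simeq_{F}\ f \circ \tau_{p}, \qquad g \circ \tau'_{p} \ =\ g \circ f \circ \tau_{p} \circ g \ \simeq_{F}\ \tau_{p} \circ g,
\]
where the explicit filtered chain homotopies are $f \circ \tau_{p} \circ H$ and $H \circ \tau_{p} \circ g$ respectively. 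These are filtered because $f$, $g$, $\tau_{p}$, $H$, $H'$ are.

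For the skew-filtered case we use exactly the same construction: set $\tau'_{s} := f \circ \tau_{s} \circ g$. Since $f$ and $g$ are filtered while $\tau_{s}$ is skew-filtered, the composition $f \circ \tau_{s} \circ g$ sends a generator in filtration level $(i,j)$ to something supported in filtration level $(j,i)$ or below, hence is skew-filtered; the remaining verifications (grading preservation, quasi-isomorphism property, intertwining up to filtered homotopy) go through verbatim.

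The main technical point — really the only place any care is required — is making sure that the chain homotopies that witness $f \circ g \simeq \mathrm{id}$ and $g \circ f \simeq \mathrm{id}$ respect both the $\mathbb{Z}$-grading and the $\mathbb{Z}\oplus\mathbb{Z}$-filtration, so that the induced homotopies $f \circ \tau_{\bullet} \circ H$ and $H \circ \tau_{\bullet} \circ g$ qualify as morphisms in the appropriate filtered category. This is however part of the input hypothesis (the equivalence $(C,\mathcal{F},M,\partial)\simeq (C',\mathcal{F}',M',\partial')$ is given as a filtered homotopy equivalence), so no additional work is needed beyond bookkeeping, and the argument is a formal adaptation of \cite[Lemma 6.5]{HM}.
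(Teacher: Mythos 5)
The paper does not give its own proof of this Proposition: it simply cites \cite[Lemma 6.5]{HM}. Your argument is exactly the standard transfer-by-conjugation construction that underlies that result, and it is correct: setting $\tau'_p := f \circ \tau_p \circ g$ and using the homotopies $f\circ\tau_p\circ H$ (for $\tau'_p \circ f \simeq f \circ \tau_p$) and $H\circ\tau_p\circ g$ (for $g\circ\tau'_p \simeq \tau_p\circ g$) works because $f$, $g$, $\tau_p$, and $H$ are all chain maps or filtered homotopies, so each composite is again filtered (or skew-filtered in the $\tau_s$ case, where the composition of a filtered map with a skew-filtered map with a filtered map is skew-filtered, as you note). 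Since the paper's definition of $CFKI_{\tau_p}$-data only asks $\tau_p$ to be a grading-preserving filtered quasi-isomorphism (with no condition on its square), nothing further needs to be checked, and your proof establishes precisely what is claimed.
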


We also note that chain isomorphism in Theorem~\ref{t1} can be made absolutely graded after a grading shift. This in turn results in the following relation as given below, see \cite[Section 6.7]{HM} for a discussion.

\begin{equation}\label{gradingshift}
\begin{aligned}
  \underline{d}_{\circ}(S^{3}_{p}(K),[0]) &= \frac{p-1}{4} + \underline{d}_{\circ}(A^{-}_{0},[0]), \\
  \bar{d}_{\circ}(S^{3}_{p}(K),[0]) &= \frac{p-1}{4} + \bar{d}_{\circ}(A^{-}_{0},[0]).
\end{aligned}
\end{equation}
where $\circ \in \{ \tau , \iota \tau \}$. We now state the examples. 

\subsection{Strongly invertible L-space knots and their mirrors}\label{strongKconenctK}

As discussed in the Introduction, many L-space knots admit a strong involution. Here we show that we can explicitly compute the involution for those knots and their mirrors. 

Ozsv{\'a}th and Szab{\'o} showed that the knot Floer homology of these knots is determined by their Alexander polynomial \cite{Lens}. Specifically, the knot Floer homology $CFK^{\infty}(K)$ of an L-space knot is chain homotopic to $C \otimes \mathbb{Z}_{2}[U,U^{-1}]$, where $C$ is a staircase complex as shown in Figure~\ref{stair}. In a similar fashion, the knot Floer homology of the mirror of an L-space knot can be taken to be copies of staircases shown in Figure~\ref{stair}.
\begin{figure}[h!]
\centering
\includegraphics[scale=1]{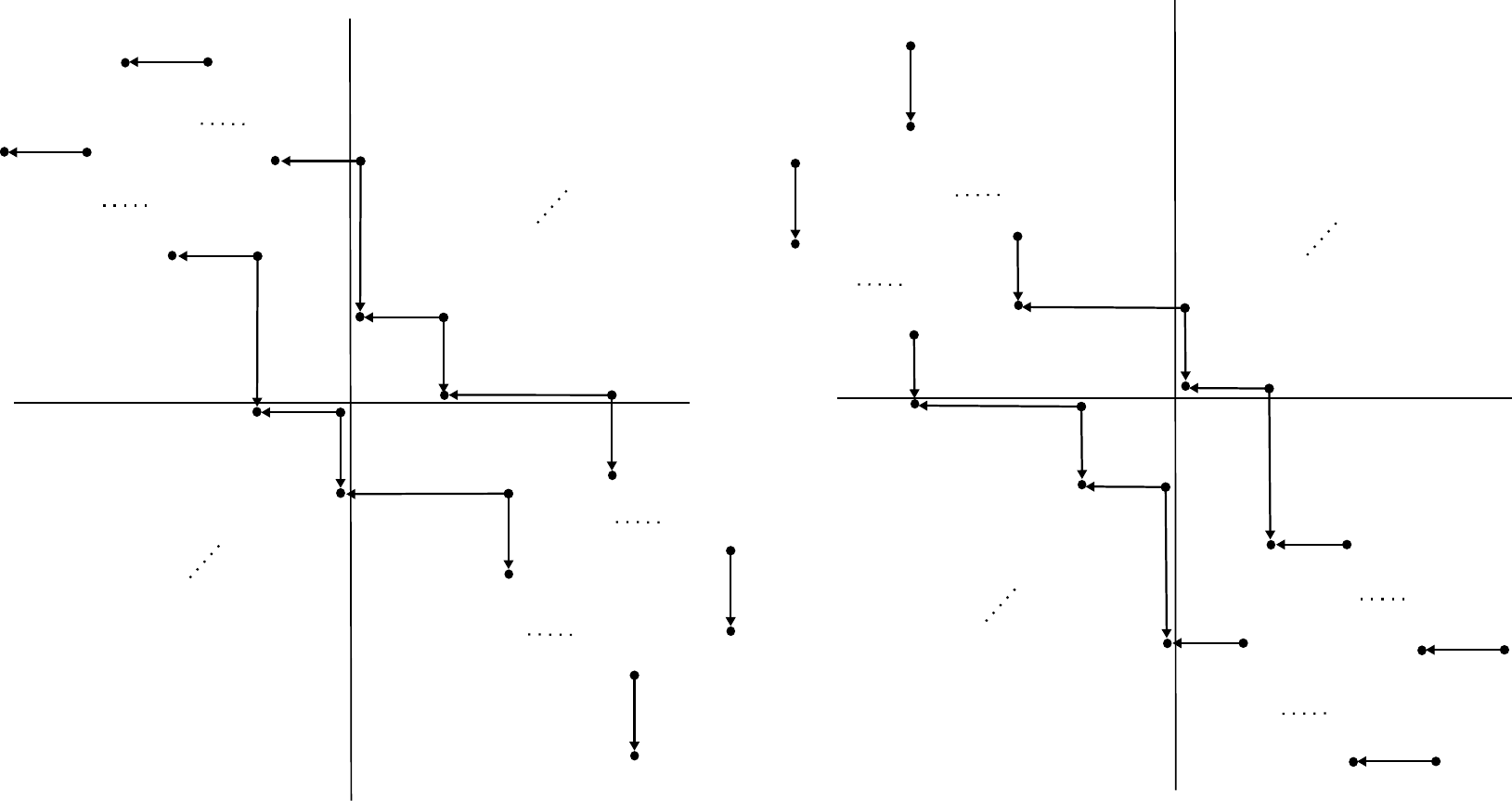} 

\caption
{Examples of model $CFK^{\infty}$ for L-space knots (on the left), and mirror of L-space knots (on the right).}\label{stair}
\end{figure}
\noindent

We define the quantity $n(K)$ as follows. Let $\Delta_{K}(t)$ denote the Alexander polynomial for an L-space knot, written as
\[
\Delta_{K}(t)= (-1)^{m} + \sum_{i=1}^{m} (-1)^{m-i} (t^{n_{i}} + t^{-n_{i}})
\]
where $n_{i}$ are a strictly increasing sequence of positive integers. Then $n(K)$ defined as 
\[
n_{K}= n_m - n_{m-1} + \cdots + (-1)^{m-2} n_2 + (-1)^{m-1} n_1.
\]

\noindent
Recall that in \cite[Section 7]{HM}, Hendricks and Manolescu computed the $\spinc$-conjugation action $\iota_K$ for L-space knots and its mirrors. Analogously, we have the following computation. 
\begin{proposition}\label{lspacestrong}
Let $K$ be an L-space knot (or the mirror of an L-space knot) that is strongly invertible with the strong-involution $\tau_{K}$. We have 
\[
\iota_{K} \simeq \tau_{K}.
\]
In particular, we get 

\begin{enumerate}

\item $\underline{d}_{\tau}(S^{3}_{p}(K),[0])=\bar{d}_{\tau}(S^{3}_{p}(K),[0])= \frac{p-1}{4} - 2n(K)$, \;  when $K$ is an L-space knot, 
\vspace{1mm}
\item $\underline{d}^{\tau}(S^{3}_{p}(K),[0])=\frac{p-1}{4}$, $ \bar{d}^{\tau}(S^{3}_{p}(K),[0])=\frac{p-1}{4} + 2n(K)$, \; when $K$ is mirror of an L-space knot.

\end{enumerate}

Here $p$ is an integer such that $p \geq g(K)$.
\end{proposition}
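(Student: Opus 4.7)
The plan is to first invoke Proposition~\ref{transfer} in order to replace $CFK^{\infty}(K)$ by its model staircase complex, shown in Figure~\ref{stair}, and then to argue the uniqueness (up to filtered grading-preserving change of basis) of the skew-filtered grading-preserving homotopy involution on this model. Since the staircase model contains no square summands, the Sarkar map $\varsigma$ is chain homotopic to the identity. Consequently both $\iota_K$ and $\tau_K$ are grading-preserving, skew-filtered automorphisms that square to the identity up to chain homotopy; that is, they satisfy exactly the same algebraic constraints. The main task then reduces to showing that there is essentially only one such automorphism.

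To establish this uniqueness, I would label the generators of the staircase as $x_0, x_1, \dots, x_{2m}$, with even-indexed $x_{2i}$ sitting at the reflex corners and odd-indexed $x_{2i+1}$ on the connecting horizontal/vertical edges. The skew-filtered property forces any such automorphism to send a generator supported at bidegree $(a,b)$ into the span of generators supported at bidegrees $(b',a')$ with $b' \le b$ and $a' \le a$, while Maslov grading preservation, combined with the fact that the staircase is supported along a single anti-diagonal, pins down the possible targets very tightly. Combining these constraints with the involutivity relation and applying a direct $U$-equivariant change-of-basis argument in the spirit of \cite[Section 7]{HM}, one concludes that, up to chain homotopy, the map is forced to swap $x_i$ with $x_{2m-i}$ as its leading-order part. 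In particular $\tau_K \simeq \iota_K$ on the staircase, which by Proposition~\ref{transfer} transfers back to the original complex.

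The computation of the equivariant correction terms is then immediate. By Theorem~\ref{t1} the invariants $\underline{d}_{\tau}$ and $\bar{d}_{\tau}$ of $S^{3}_{p}(K)$ may be computed from the action of $\tau_K$ on $A^{-}_{0}(K)$, and since this action is chain homotopic to $\iota_K$, the resulting mapping cone is quasi-isomorphic to the one used to define $\underline{d}_{\iota}$ and $\bar{d}_{\iota}$. Combining the grading shift formula~\eqref{gradingshift} with the computations of Hendricks--Manolescu \cite[Section 7]{HM} for L-space knots and their mirrors yields the two stated formulas.

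The step I expect to be the main obstacle is the uniqueness verification in the mirror case. There the staircase is oriented oppositely and the differentials flow away from the corner generators rather than towards them, so the positions of the ``free" generators and the generators lying in the image of $\partial$ are reversed. The skew-filtration argument therefore needs to be set up dually, but the structure of the case analysis parallels the direct case and ultimately yields the same uniqueness conclusion.
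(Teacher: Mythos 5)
Your proposal follows essentially the same route as the paper: transfer both $\iota_K$ and $\tau_K$ to the staircase model via Proposition~\ref{transfer}, note that the Sarkar map is the identity on staircases so that $\iota_K$ and $\tau_K$ are subject to identical constraints (grading-preserving, skew-filtered, squaring to the identity), conclude they agree by the uniqueness argument from \cite[Section 7]{HM}, and then feed this into the surgery formula and the grading shift formula~\eqref{gradingshift}. The paper simply cites \cite[Proposition 7.3]{HM} for the uniqueness and the resulting $\underline{V}_0$, $\overline{V}_0$ values rather than re-deriving the explicit form $x_i \leftrightarrow x_{2m-i}$ of the map, but the substance is the same; the only nitpick is that $\tau_K^2 \simeq \mathrm{id}$ already holds by Proposition~\ref{strong} without appealing to the triviality of the Sarkar map (that observation is only needed for $\iota_K$).
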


\begin{proof}
Recall that if $\tau_{K}$ is a strong involution on a knot $K$, the induced map on $CFK^{\infty}$ is a skew filtered map that squares to the Sarkar map (see Proposition~\ref{strong}). Furthermore, the Sarkar map is the identity for staircase complexes, \cite{sarkar2015moving}. Firstly, by Proposition~\ref{transfer} we transfer both $\iota_K$ and $\tau_{K}$ to the model complexes depicted in Figure~\ref{stair}. As seen in \cite{HM}, the fact that $\iota_K$ is grading preserving skew-filtered and it squares to identity is enough to uniquely determine it for both L-space knots and the mirrors of L-space knots. Hence the claim follows from \cite[Proposition 7.3]{HM} after observing the relation between $\underline{d}$ and $\bar{d}$ invariants with the $\underline{V}_{0}$ and $\overline{V}_{0}$ invariants.
\end{proof}

Using the above we also prove Proposition~\ref{iotaidentify}.
\begin{proof}[Proof of Proposition~\ref{iotaidentify}]
Montesinos trick \cite{Montesinos1975} imply $(S^{3}_{n}(\overline{T}_{p,q}),\tau)$ is equivariantly diffeomorphic to double branch cover $(\Sigma_{2}(L_{n,p,q}),\tau_{n,p,q})$. The proof then follows from the Theorem~\ref{t1}, \cite[Theorem 1.5]{HM} and Proposition~\ref{lspacestrong} after observing that $n \geq (p-1)(q-1)/2$ is a large surgery for the mirrored torus knots $\overline{T}_{p,q}$.

\end{proof}
 
\subsection{Periodic involution on  L-space knots and their mirrors}
Several L-space knots admit a periodic involution. For Example, it is easy to visualize that $(2,2n+1)$-torus knots admit a periodic involution. For such knots, we have the following.

\begin{proposition}\label{lspaceperiodic}
Let $K$ be an L-space knot (or mirror of an L-space knot) with a periodic involution $\tau_K$. we have 
\[
\tau_K \simeq \mathrm{id}.
\]
In particular, we get,
\begin{enumerate}
\item $\underline{d}_{\tau}(S^{3}_{p}(K),[0])=\bar{d}_{\tau}(S^{3}_{p}(K),[0])=\frac{p-1}{4} - 2n(K)$, \;  when $K$ is an L-space knot,
\vspace{1mm}
\item $\underline{d}_{\tau}(S^{3}_{p}(K),[0])= \bar{d}_{\tau}(S^{3}_{p}(K),[0])=\frac{p-1}{4}$, \; when $K$ is mirror of an L-space knot.
\end{enumerate}
Here $p$ is an integer such that $p \geq g(K)$.
\end{proposition}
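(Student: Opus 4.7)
The plan is to transfer the question to the staircase model of Figure~\ref{stair} via Proposition~\ref{transfer}, and then argue by grading and filtration rigidity that any periodic-type action on such a model is the identity.

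First, by Proposition~\ref{transfer} we may replace $(CFK^{\infty}(K),\tau_K)$ by the staircase $C\otimes\mathbb{Z}_{2}[U,U^{-1}]$ equipped with some grading-preserving filtered quasi-isomorphism $\tau'_{K}$, and the claim $\tau_{K}\simeq \mathrm{id}$ reduces to $\tau'_{K}\simeq\mathrm{id}$ on the model. Recall that the staircase has generators $v_{0},v_{1},\ldots,v_{2m}$ with Maslov gradings alternating between two values differing by $1$, and the ``peaks'' $\{v_{2k}\}$ and the ``valleys'' $\{v_{2k+1}\}$ each form an antichain in the componentwise partial order on the bigradings $\mathcal{F}(v_{i})\in\mathbb{Z}^{2}$ (this is what it means for the complex to be a staircase). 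The identical structural features hold for the mirror model on the right of Figure~\ref{stair}.

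Second, by Proposition~\ref{prop:tauK} we have $(\tau'_{K})^{2}\simeq \varsigma$, and because the Sarkar map $\varsigma$ is nontrivial only on the initial corner of each box summand \cite{sarkar2015moving} and $C$ contains no boxes, $\varsigma\simeq \mathrm{id}$ on $C$. Since $\tau'_{K}$ is filtered and grading-preserving, $\tau'_{K}(v_{i})$ is a sum of $\mathbb{Z}_{2}$-basis elements $U^{\ell}v_{j}$ of the same Maslov grading as $v_{i}$ and with bigrading componentwise $\leq \mathcal{F}(v_{i})$. A direct check shows that no nontrivial $U$-power of any $v_{j}$ achieves a Maslov grading lying in the two-element set realized by the staircase generators, so the only contributing terms are the generators $v_{j}$ themselves with $M(v_{j})=M(v_{i})$, i.e.\ those of the same type (peak or valley) as $v_{i}$. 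The antichain property of peaks (resp.\ valleys) then forces $j=i$, and since $\tau'_{K}$ is a quasi-isomorphism the coefficient must be $1\in\mathbb{Z}_{2}$. Therefore $\tau'_{K}(v_{i})=v_{i}$ for every generator, so $\tau'_{K}=\mathrm{id}$ and $\tau_{K}\simeq\mathrm{id}$ on $CFK^{\infty}(K)$.

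Third, the induced action on the quotient $A^{-}_{0}(K)$ is then also chain homotopic to the identity, so $\underline{d}_{\tau}(A^{-}_{0},[0])=\bar{d}_{\tau}(A^{-}_{0},[0])=d(A^{-}_{0},[0])$. By the standard Ozsv\'ath--Szab\'o computation, $d(A^{-}_{0}(K),[0])=-2V_{0}(K)=-2n(K)$ when $K$ is an L-space knot, and $d(A^{-}_{0}(K),[0])=0$ when $K$ is the mirror of an L-space knot. Substituting into the grading-shift formula~\ref{gradingshift} yields the stated values of $\underline{d}_{\tau}(S^{3}_{p}(K),[0])$ and $\bar{d}_{\tau}(S^{3}_{p}(K),[0])$. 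The main (mild) obstacle is step two: verifying the antichain property and the Maslov-grading rigidity for both the L-space knot staircase and its mirror, so as to rule out any $U$-shifted generator slipping into the image $\tau'_{K}(v_{i})$; this is a small but unavoidable explicit check on the two model complexes.
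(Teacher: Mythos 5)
Your proposal is correct and follows essentially the same route as the paper: reduce to the staircase model, note that $\varsigma$ acts as the identity there, and conclude $\tau_K\simeq\mathrm{id}$ from the grading and filtration constraints, then deduce the correction terms from the $d$-invariant of $A^{-}_{0}$ and the grading shift. The paper compresses the crucial middle step to the single sentence ``the constraints then imply $\tau_K=\mathrm{id}$'' and cites \cite[Proposition 4.6]{hendricks2021applications} for $\tau=\mathrm{id}\Rightarrow \underline{d}_{\tau}=d=\bar{d}_{\tau}$, whereas you spell out the antichain/Maslov-rigidity argument and derive the final formulas directly from the $d$-invariant of $A^{-}_{0}$; these are expository differences, not a different method.
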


\begin{proof}
Note that when $\tau_K$ is a periodic involution, the induced map on the knot Floer complex is a grading preserving filtered map that squares to the Sarkar map (see Proposition~\ref{prop:tauK}). For the case in hand, we get $\tau^{2}_{K} \simeq \textrm{id}$. The constraints then imply $\tau_K= \textrm{id}$. Finally note that $d(S^{3}_{p}(K),[0])=\frac{p-1}{4} - 2n(K)$ if $K$ is an L-space knot, and $d(S^{3}_{p}(K),[0])=\frac{p-1}{4}$ of $K$ is the mirror of an L-space knot and that $\tau= \textrm{id}$ implies $\underline{d}^{\tau}=d=\bar{d}^{\tau}$ \cite[Proposition 4.6]{hendricks2021applications}.

\end{proof}

\begin{remark}
Using Corollary~\ref{corollary}, we can also compute the $\underline{d}_{\iota \tau}$ and $\bar{d}_{\iota \tau}$ invariants for L-space knots. Similarly, as a parallel to \cite[Proposition 8.2]{HM}, we can compute $\underline{d}_{\tau}$, $\bar{d}_{\tau}$  for thin knots using   Theorem~\ref{thin_periodic}. We omit the computation for brevity. 
\end{remark}

\subsection{Computing $\tau$-local equivalence class using the surgery formula}
Associated to $(Y,\tau)$, an integer homology sphere $Y$ equipped with an involution $\tau$, the invariant $h_{\tau}:= [(CF^{-}(Y), \tau)]$ was studied in \cite{dai2020corks}. Here $ [(CF^{-}(Y), \tau)]$ represents the local equivalence class of the $\iota$-complex $(CF^{-}(Y), \tau)$, see Definition~\ref{iota}. In \cite{dai2020corks}, $h_{\tau}$-invariant was bounded using equivariant cobordism and monotonicity results in local equivalence associated with it. Here, we show that in certain situations the equivariant surgery formula is enough to bound $h_{\tau}$ (in the sense of the partial order shown in \cite{dai2018infinite}).

\begin{example}
We consider the right-handed trefoil $T_{2,3}$ and $(-1)$-surgery on $T_{2,3}$. Note that $T_{2,3}$ has two different symmetries, one strong $\tau^{s}$ and one periodic $\tau^{p}$, as seen in Figure~\ref{trefoillocal}.
\begin{figure}[h!]
\centering
\includegraphics[scale=.7]{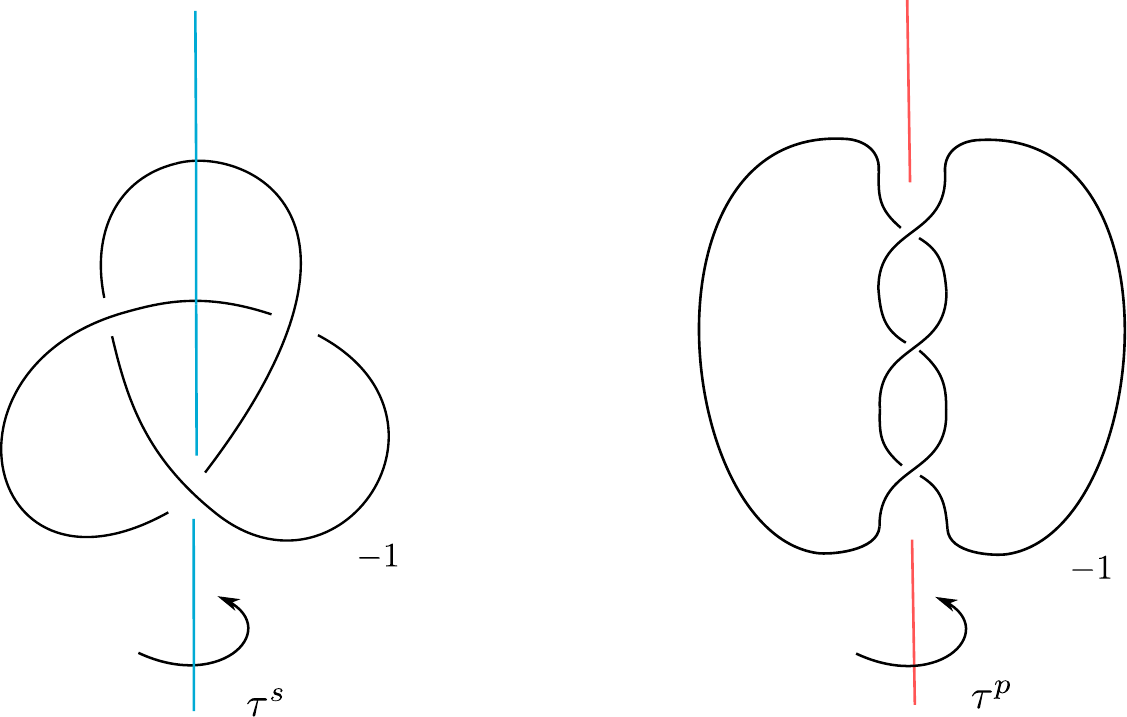} 

\caption
{Symmetries of $T_{2,3}$.}\label{trefoillocal}
\end{figure}
Proposition~\ref{lspacestrong} and the Proposition~\ref{lspaceperiodic} immediately imply that  $\tau^{s}$ and $\tau^{p}$ action on $HF^{+}(S^{3}_{+1}(\overline{T}_{2,3}))$ are as in Figure~\ref{trefoilintro}.

\noindent
It follows that, $\tau$-local equivalence class $h_{\tau}(S_{-1}^{3}(T_{2,3}), \tau^{s}) < 0$ and $h_{\tau}(S_{-1}^{3}(T_{2,3}), \tau^{p}) = 0$. This gives an alternative proof of \cite[Lemma 7.1]{dai2020corks}. Note that we are implicitly using that $\tau$-action is well-behaved under orientation reversal isomorphism, the proof of which follows from a discussion similar to that in \cite[Subsection 4.2]{HM}.
\end{example}
Readers may compare the Example above with  \cite[Proposition 6.27]{flexible_equivariant}.

\begin{example}
We now look at the $(+1)$-surgery on figure-eight knot $4_1$, and the periodic involution $\tau^{p}$ on it, as in Figure~\ref{figure_eight_periodic}. We identify this involution on the knot Floer complex of $4_1$ using Theorem~\ref{thin_periodic}. The resulting action on $HF^{-}(S^{3}_{p}(4_1))$ is shown in Figure~\ref{figure_eight_periodic}. As before we get that the local equivalence class $h_{\tau}((S^{3}_{+1}(4_1),\tau^{p}) < 0$, recovering \cite[Lemma 7.2]{dai2020corks}.
 \end{example}

\begin{example}
Here we consider the periodic involution $\tau^{p}$ on the Stevedore knot $6_1$ as depicted in Figure~\ref{stevedore}. Again as a consequence Theorem~\ref{thin_periodic}, we obtain the resulting action on the $CFK^{\infty}$ complex. 
\begin{figure}[h!]
\centering
\includegraphics[scale=.75]{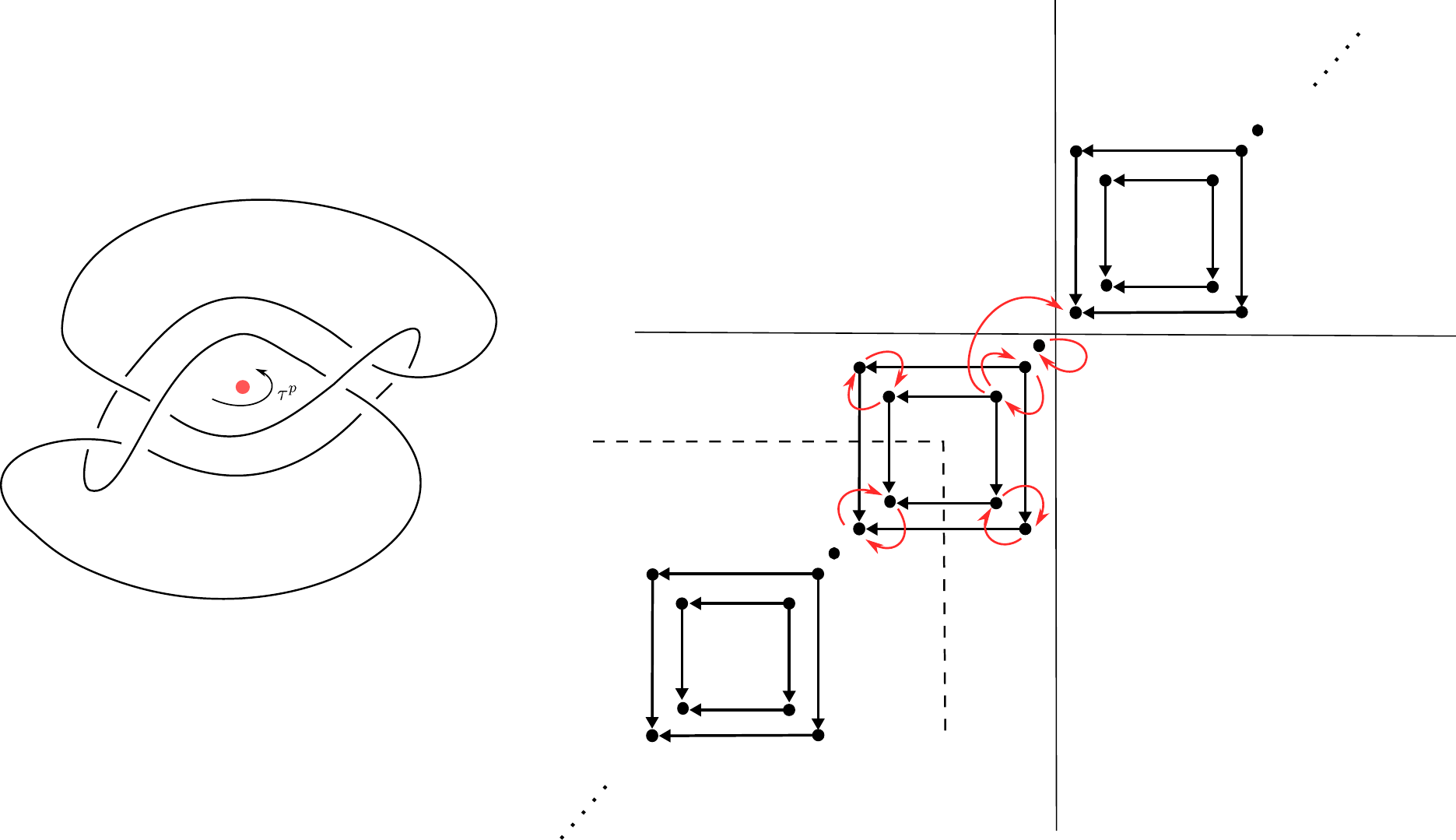} 
\caption
{Left: Periodic symmetry $\tau^{p}$ of $6_1$. Right: action of $\tau^{p}$ on $CFK^{\infty}(6_1)$.}\label{stevedore}
\end{figure}
It follows that $\tau^{p}$ acts on $HF^{-}(S^{3}_{+1}(6_1))$ as shown in Figure~\ref{stevedorehomology}.
\begin{figure}[h!]
\centering
\includegraphics[scale=.8]{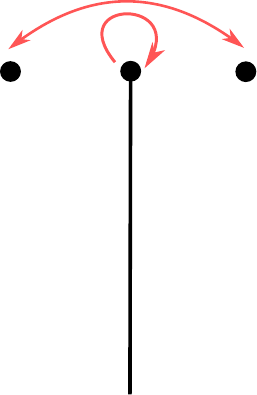} 
\caption
{Left: The action of $\tau^{p}$ on $HF^{-}(S^{3}_{+1}(6_1))$.}\label{stevedorehomology}
\end{figure}
The above computation also implies that the local equivalence class $h_{\tau}(S^{3}_{+1}(6_1),\tau^{p})$ is trivial. 
\end{example}

\begin{remark}
 In fact, it is known that $(6_1, \tau^{p})$ is equivariantly slice \cite[Example 1.12]{naikequivariant}. By a standard argument, it can be shown that this implies the induced involution extends over the contractible manifold that $S^{3}_{+1}(6_1)$ bounds. On the contrary, it was shown in \cite[Lemma 7.3]{dai2020corks} that the two strong involutions on $S^{3}_{+1}(6_1)$ coming from the two strong involutions on $6_1$ does not extend over any homology ball that $S^{3}_{+1}(6_1)$ may bound. 
\end{remark}

\begin{example}
In Lemma~\ref{thin}, we show that for diagonally supported thin knots there is at most one periodic type action. However, sometimes the grading and filtration restrictions allow us to deduce the uniqueness of periodic type actions for thin knots that are not diagonally supported. For example, the action of the periodic involution $\tau^{p}$ on the $\overline{6}_2$ knot, shown in Figure~\ref{62knot} can be uniquely determined from the constraints. We then compute the action of $\tau^{p}$ on $HF^{+}(S^{3}_{2}(\overline{6}_2),[0])$ in Figure~\ref{62homology}. 
\begin{figure}[h!]
\centering
\includegraphics[scale=.9]{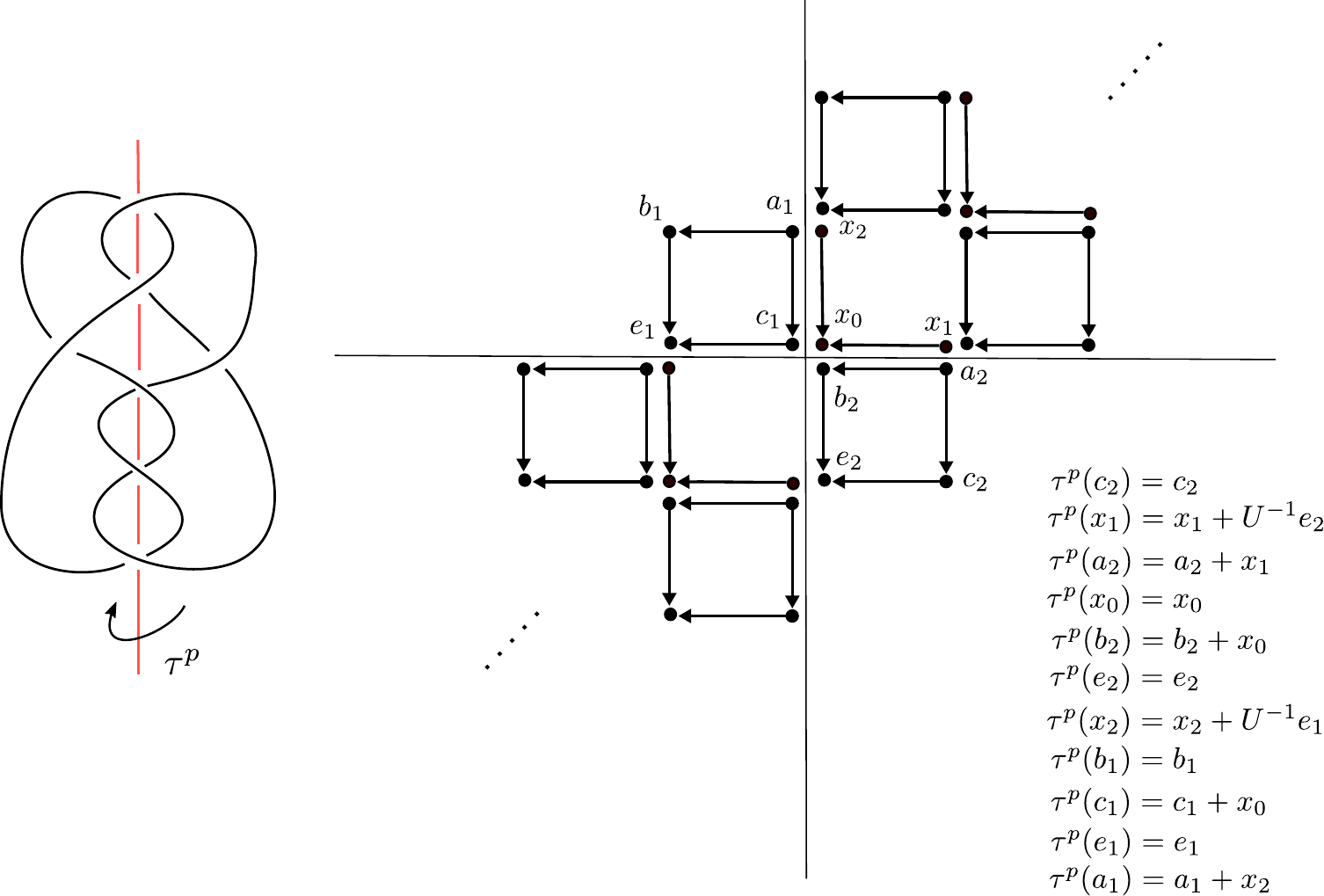} 
\caption
{Left: Periodic involution $\tau^{p}$ on $\overline{6}_2$, Right: $CFK^{\infty}(\overline{6}_2)$, with $\tau^{p}$ action.}\label{62knot}
\end{figure}
\begin{figure}[h!]
\centering
\includegraphics[scale=.75]{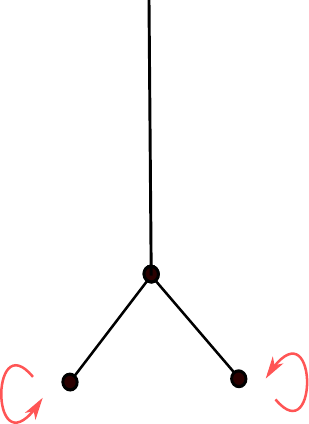} 
\caption
{The action of  $\tau^{p}$ on $HF^{+}(S^{3}_{2}(\overline{6}_2),[0])$.\label{62homology}}\label{62knot}
\end{figure}

\end{example}

\begin{example}\label{strong_figure_eight}
In \cite[Example 4.6]{dai2020corks} it was shown two different strong involutions $\tau$ and $\sigma$ acting on figure-eight knot induce different actions on $HF^{-}(S^{3}_{+1}(4_1))$. This was done by showing $h_{\tau}(S^{3}_{+1}(4_1)) < 0$, while $h_{\sigma}(S^{3}_{+1}(4_1))=0$. By the equivariant surgery formula, we at once obtain that $\tau$ and $\sigma$ acts differently on $CFK^{\infty}(4_1)$. This is in contrast to the periodic type and $\iota_K$ action on $4_1$, both of which are unique. Interested readers may consult \cite[Example 2.26]{abhishek2022equivariant} for the explicit computation of the $\tau$, $\sigma$ actions on $CFK^{\infty}(4_1)$. 
\end{example}

\subsection{Identifying Cork-twists involutions}\label{cork}
As discussed in the introduction, the surgery formula enables us to identify the cork-twist involution for corks that are obtained as surgeries on symmetric knots with the action of the symmetry on the knot. Note that we do need that the $3$-genus of the knot to be $1$. Such type of corks include the $(+1)$- surgery on the Stevedore knot \cite{dai2020corks}, $(+1)$-surgery on the $P(-3,3,-3)$ pretzel knot, \cite{dai2020corks} and the positron cork \cite{hayden2021corks}. For example, it was shown in \cite[Lemma 7.3]{dai2020corks} that two strong involutions $\tau$ and $\sigma$ of the Stevedore knot results in strong corks. Hence, the equivariant surgery formula identifies those cork-twists with the action of $\tau$ and $\sigma$ on $A^{-}_{0}(6_1)$. We refer readers to \cite[Example 2.27]{abhishek2022equivariant} for an explicit computation of these actions.

\subsection{Strong involution on $T_{2,3} \# T^{r}_{2,3}$}\label{KconnectKcompute}

Here, using Theorem~\ref{KconnectKstrong}, we compute the strong involution $\tilde{\tau}_{sw}$ on connected sum of two copies of the right-handed trefoil (See Subsection~\ref{KK} for the definition of the $\tilde{\tau}_{sw}$ action). We also compute the concordance invariant associated with this action, see Section~\ref{concordanceinvariant}. We state the computation below.

\begin{proposition}
Let $K$ represent the strongly invertible knot $ T_{2,3} \# T^{r}_{2,3}$. We have,
\begin{align*}
\underline{d}_{\tau}(A^{-}_{0}(K), \tilde{\tau}_{sw})& = \bar{d}_{\tau}(A^{-}_{0}(K), \tilde{\tau}_{sw}) = - 2. \\
\underline{d}_{\iota \tau}(A^{-}_{0}(K), \tilde{\tau}_{sw}) &= -4, \; \bar{d}_{\iota \tau}(A^{-}_{0}(K), \tilde{\tau}_{sw}) = - 2.
\end{align*}

\end{proposition}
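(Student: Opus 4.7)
The plan is to reduce the computation to an explicit calculation on the tensor product of staircases, using Theorem~\ref{KconnectKstrong} to write $\tilde{\tau}_{sw}$ concretely and then combining this with the $\iota_K$ connected sum formula from Remark~\ref{compare} for the $\iota\tau$-correction terms. First, I would pass to the model staircase complex for $T_{2,3}$ by Proposition~\ref{transfer}; this is the standard three-generator $L$-shaped staircase with corners at the appropriate lattice points, together with its mirror for $T_{2,3}^{r}$. Since $T_{2,3}$ is a strongly invertible $L$-space knot, Proposition~\ref{lspacestrong} gives $\tau_{K}\simeq \iota_{K}$ on each factor, which pins down the single-factor ingredients $\rho_{1}\otimes \rho_{2}$, $t$, and $sw\otimes sw$ that enter the definition of $\tilde{\tau}_{exch}$ on the tensor product.

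Next I would assemble, on the tensor complex $CFK^{\infty}(T_{2,3})\otimes CFK^{\infty}(T_{2,3}^{r})$, the composition
\[
\tilde{\tau}_{sw}\ \simeq\ (\mathrm{id}\otimes\mathrm{id} + \Psi\otimes\Phi)\circ \tilde{\tau}_{exch}
\]
from Theorem~\ref{KconnectKstrong}, and similarly
\[
\iota\circ\tilde{\tau}_{sw}\ \simeq\ (\mathrm{id}\otimes\mathrm{id}+\Phi\otimes\Psi)\circ(\iota_{K}\otimes\iota_{K^{r}})\circ(\mathrm{id}\otimes\mathrm{id}+\Psi\otimes\Phi)\circ\tilde{\tau}_{exch}
\]
by combining the above with the formula from Remark~\ref{compare}. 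Since the individual $\iota_{K}$ and $\tau_{K}$ actions on a staircase are explicit, this produces closed-form formulas for both $\tilde{\tau}_{sw}$ and $\iota\tilde{\tau}_{sw}$ on the nine-generator (per $U$-orbit) tensor complex.

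Then I would restrict to the quotient complex $A^{-}_{0}(K)$, generated by those tensor products $v_{i}\otimes v_{j}'$ whose bifiltration level lies in the region $\{i<0, j<0\}$, and read off the homology of the mapping cones
\[
A^{-}_{0}(K)\xrightarrow{\ Q(\mathrm{id}+\tilde{\tau}_{sw})\ } Q\cdot A^{-}_{0}(K)[-1],\qquad A^{-}_{0}(K)\xrightarrow{\ Q(\mathrm{id}+\iota\tilde{\tau}_{sw})\ } Q\cdot A^{-}_{0}(K)[-1]
\]
as recalled in Section~\ref{correction}. The invariants $\underline{d}_{\circ}$ and $\bar{d}_{\circ}$ are then extracted as the grading of the bottom of the infinite $U$-tail and the minimal grading supporting a non-$U$-torsion element of the same parity, respectively; combined with the shift formula \eqref{gradingshift} (or rather its $A^{-}_{0}$ analogue with no surgery shift) this yields the stated numerical values.

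The principal obstacle is the bookkeeping: the $\Psi\otimes\Phi$ correction term on the tensor product produces several off-diagonal contributions, and it is precisely these that are responsible for the discrepancy $\underline{d}_{\iota\tau}=-4$ versus $\bar{d}_{\iota\tau}=-2$, as well as for the non-triviality $\underline{d}_{\tau}=\bar{d}_{\tau}=-2$ (which should be contrasted with $d=0$ coming from the fact that $K$ is slice). Hence the hard step is to confirm that these $\Psi\otimes\Phi$ contributions genuinely survive after restriction to $A^{-}_{0}$ and after taking the mapping cone, as opposed to being absorbed into a change of basis; this is done by examining the grading/filtration data of the generators lying near the corner of $A^{-}_{0}$ and arguing that the resulting cycles in the mapping cone cannot be killed by any $U$-equivariant filtered homotopy.
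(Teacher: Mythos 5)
Your overall strategy matches the paper's: use Theorem~\ref{KconnectKstrong} and Remark~\ref{compare} to write $\tilde{\tau}_{sw}$ and $\iota_{K}\tilde{\tau}_{sw}$ explicitly on the nine-generator tensor complex $CFK^{\infty}(T_{2,3})\otimes CFK^{\infty}(T_{2,3}^{r})$, restrict to $A^{-}_{0}$, and read off the correction terms. Two points need attention, though. First, your appeal to Proposition~\ref{lspacestrong} is off the mark: that proposition computes the action of a strong symmetry \emph{on a single} $L$-space knot, but there is no such single-factor strong action in this story --- $\tilde{\tau}_{sw}$ swaps the summands. What actually makes $\tilde{\tau}_{exch}$ computable is the direct observation (which the paper uses) that on a staircase a grading-preserving, skew-filtered involution is unique, so the ``skew-filtered map in each factor'' built from $\rho_i$, $t$, and $sw$ is the reflection of the staircase; invoking Proposition~\ref{lspacestrong} here conflates two different maps. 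Second, your closing paragraph is noticeably vaguer than what is needed: the paper does not argue that $\Psi\otimes\Phi$ contributions ``cannot be killed by a filtered homotopy''; it simply identifies the generators $i,j$ of grading $-2$ in $A^{-}_{0}$, observes that they are fixed by $(\mathrm{id}\otimes\mathrm{id}+\Psi\otimes\Phi)\circ\tilde{\tau}_{exch}$ (giving $\underline{d}_{\tau}=\bar{d}_{\tau}=-2$), and then checks that neither is fixed by $\iota_{K}\tilde{\tau}_{sw}$ while $Ui$ is the top $U$-nontorsion fixed class (giving $\underline{d}_{\iota\tau}=-4$, $\bar{d}_{\iota\tau}=-2$). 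You should replace the ``argue it survives the homotopy'' step with this concrete grading bookkeeping to close the argument.
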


\begin{proof}
From Proposition~\ref{KconnectKstrong}, we obtain that the action of $\tilde{\tau}_{sw}$ on $CFK^{\infty}(K \# K^{r})$ in  can be identified with the action of $(\mathrm{id} \otimes \mathrm{id} + \Psi \otimes \Phi) \circ \tilde{\tau}_{exch}$ on $CFK^{\infty} \otimes CFK^{\infty}(K^{r})$ (although we used the full version $\mathcal{CFK}$ for the Proposition, it can be checked that the maps involved have $0$ Alexander grading shift, hence they induce maps between the ordinary $CFK^{\infty}$ complexes). The knot Floer complex for $T_{(2,3)}$ and $T^{r}_{(2,3)}$ is shown in Figure~\ref{tausw}.  
\begin{figure}[h!]
\centering
\includegraphics[scale=1.1]{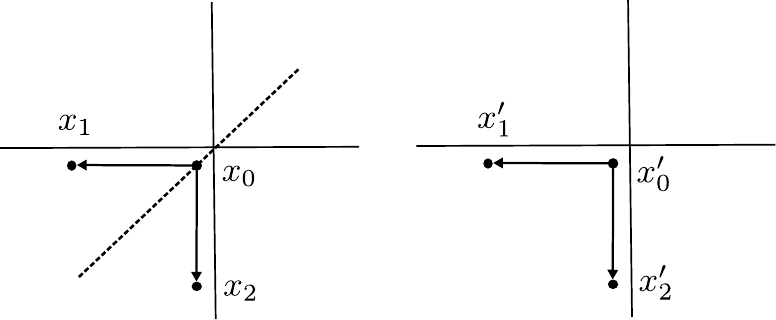}

\caption
{$CFK^{\infty}(T_{2,3})$ on the left. For $CFK^{\infty}(T^{r}_{2,3})$, we use the complex that is obtained by reflecting the stair along the $y=x$ diagonal.}\label{tausw}
\end{figure}
The complex $CFK^{\infty}(T_{2,3}) \otimes CFK^{\infty}(T^{r}_{2,3})$ is shown in Figure~\ref{trefoilconnect}.
\begin{figure}[h!]
\centering
\includegraphics[scale=1.1]{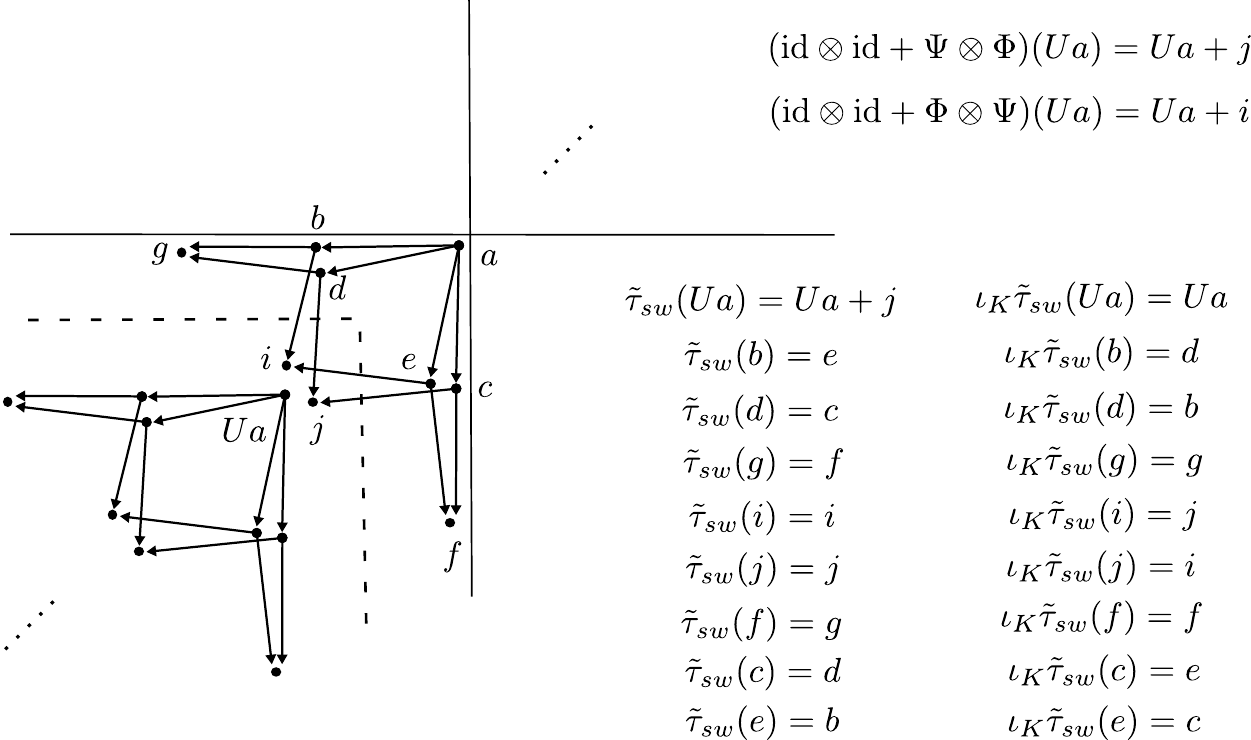} 

\caption
{Left:  $CFK^{\infty}(T_{2,3}) \otimes CFK^{\infty}(T^{r}_{2,3})$, Right: Action of $\tilde{\tau}_{sw}$ and $\Psi \otimes \Phi$ on the basis elements. }\label{trefoilconnect}
\end{figure}
Note that for the case in hand, $\tilde{\tau}_{exch}$ map can be computed explicitly. It exchanges the two factors, followed by a skew-filtered map in each factor. For example, we get $\tilde{\tau}_{exch}(x_1 \otimes x^{\prime}_2)=x_1 \otimes x^{\prime}_2$, while $\tilde{\tau}_{exch}(x_1 \otimes x^{\prime}_0)=x_0 \otimes x^{\prime}_2$ and so on. Also note that the map $\Psi \otimes \Psi$ is non-trivial only for the generator $a$. Hence the action of $\tilde{\tau}_{sw}$ is as shown in Figure~\ref{trefoilconnect}. Using Equation~\ref{iotaconnect} we also compute the action of $\iota_K \tilde{\tau}_{sw}$.

Now, by looking at the $A^{-}_{0}$-complex of the tensor product, it follows that $[i]$ and $[j]$ both are in grading $-2$ and are fixed by $(\mathrm{id} \otimes \mathrm{id} + \Psi \otimes \Phi) \circ \tilde{\tau}_{exch}$. Hence from the Definition of $\underline{d}^{\tau}$, we get,
\[
\underline{d}_{\tau}(A^{-}_{0}(K), \tilde{\tau}_{sw})=-2.
\]
By a similar reasoning as in \cite[Proposition 8.2]{Zemkeconnected}, we can also see that the $gr(i)$ determines the $\bar{d}_{\tau}$ invariant. Hence we get,
\[
\bar{d}_{\tau}(A^{-}_{0}(K), \tilde{\tau}_{sw})=-2.
\]
For $\underline{d}_{\iota \tau}$, we observe that the $i$, and $j$ are the only two generators in grading $-2$, which are $U$-non-torsion. However, neither of them is fixed by $\iota_K \tilde{\tau}_{sw}$. It can be checked that $[Ui]$ is the highest graded $U$-non-torsion element which is fixed by $\iota_K \tilde{\tau}_{sw}$, hence we get 
\[
\underline{d}_{\tau}(A^{-}_{0}(K), \tilde{\tau}_{sw})=-4.
\]
Calculation for $\bar{d}_{\iota \tau}$ is similar to the $\bar{d}$. Indeed, although the actions of $\iota_K$ and $\iota_K \tilde{\tau}_{sw}$ different, we see that $gr(i)$ still determines the $\bar{d}_{\iota \tau}$ invariant. We get,
\[
\bar{d}_{\tau}(A^{-}_{0}(K), \tilde{\tau}_{sw})=-2.
\]
\end{proof}

\begin{remark}\label{questiontausw}
Let us now compare the above invariants with similar invariants from the literature. The $d$-invariant defined by Ozsv{\'a}th and Szab{\'o} \cite{OSabsgr}, satisfies $d(A^{-}_0(K)) = -2$. While the involutive correction terms are $\bar{d}(A^{-}_{0}(K))= -2$ and $\underline{d}(A^{-}_{0}(K))=-4$, \cite[Proposition 8.2]{Zemkeconnected}. Hence this motivates the following question:
\begin{question}\label{questiondinvariant}
\noindent
\begin{enumerate}

\item Are there knots $K$, such that:
\[
\underline{d}_{\tau}(A^{-}_{0}(K \# K^{r}), \tilde{\tau}_{sw}) \neq {d}(A^{-}_{0}(K \# K^{r})) \; \text{or} \; \bar{d}_{ \tau}(A^{-}_{0}(K \# K^{r}), \tilde{\tau}_{sw}) \neq {d}(A^{-}_{0}(K \# K^{r}))?
\]

\item Are there knots $K$, such that:
\[
\underline{d}_{\iota \tau}(A^{-}_{0}(K \# K^{r}), \tilde{\tau}_{sw}) \neq \underline{d}(A^{-}_{0}(K \# K^{r})) \; \text{or} \; \bar{d}_{\iota \tau}(A^{-}_{0}(K \# K^{r}), \tilde{\tau}_{sw}) \neq \bar{d}(A^{-}_{0}(K \# K^{r}))?
\]
\end{enumerate}
\end{question}  
\end{remark}
\noindent
Perhaps it is note worthy that
\begin{align*}
\iota_{K \# K^{r}} \circ \tilde{\tau}_{sw} &= (\mathrm{id} \otimes \mathrm{id} + \Phi \otimes \Psi) \circ (\iota_{K} \otimes \iota_{K^{r}}) \circ (\mathrm{id} \otimes \mathrm{id} + \Psi \otimes \Phi) \circ \tilde{\tau}_{exch} \\
&\simeq (\mathrm{id} \otimes \mathrm{id} + \Phi \otimes \Psi) \circ (\mathrm{id} \otimes \mathrm{id} + \Phi \otimes \Psi) \circ (\iota_{K} \otimes \iota_{K^{r}}) \circ \tilde{\tau}_{exch} \\
&\simeq \iota_{K} \otimes \iota_{K^{r}} \circ \tilde{\tau}_{exch}.
\end{align*}
Where in the second line we have used that $\iota_K$ interchanges $\Phi$ and $\Psi$, and in the third line we have used $\Phi^{2}=\Psi^{2} \simeq 0$, see \cite[Lemma 2.8, Lemma 2.11]{Zemkeconnected}. Hence the map $\iota_{K \# K^{r}} \circ \tilde{\tau}_{sw}$  do indeed (non-trivially) differ from $\iota_{K \# K^{r}}$, which makes the case for an affirmative answer to Question~\ref{questiondinvariant}, number $(2)$.

\section{An invariant of equivariant knots}\label{knotCI} We end with an observation, which leads us to define a Floer theoretic invariant of the equivariant knots. Let us begin by recalling that given a doubly-based knot $K$ inside a $\mathbb{Z}HS^{3}$, $Y$, Hendricks and Manolescu \cite[Proposition 6.3]{HM} considered a mapping complex $CI$ of the map
\[
Q.(\mathrm{id} + \iota_K): CFK^{\infty}(Y, K, w, z) \rightarrow Q.CFK^{\infty}(Y, K, w, z)[-1].
\]
They show that $CI(Y,K,w,z)$ is an invariant of the tuple $(Y,K,w,z)$. However, this mapping cone does not have a $\mathbb{Z} \oplus \mathbb{Z}$-filtration induced from that of $CFK^{\infty}(Y,K,w,z)$. This is because $\iota_K$ is skew-filtered, but the identity map is filtered, so there is no natural filtration in the mapping cone.
On the contrary, if we start with a periodic knot $(Y,K, \tau, w, z)$ and consider a similar mapping cone complex $CI_{\tau}$ of the map;
\[
Q.(\mathrm{id} + \tau_K): CFK^{\infty}(Y, K, w, z) \rightarrow Q.CFK^{\infty}(Y, K, w, z)[-1],
\]
there is a natural $\mathbb{Z} \oplus \mathbb{Z}$-filtration on it. This holds because the action $\tau_K$ is filtered. In particular, we obtain an invariant of periodic knots that has more information than its involutive counterpart. Recall the notion of equivalence between two periodic knots,

\begin{definition}\label{periodic_equivalence}
We call two periodic knots $(Y_1, K_1, \tau_1, w_1, z_1)$ and $(Y_2, K_2, \tau_2, w_2, z_2)$ equivalent if there exist a diffeomorphism
\[
\phi:(Y_1, K_1, w_1, z_1) \rightarrow (Y_2, K_2, w_2, z_2),
\]
which intertwines with $\tau_1$ and $\tau_2$.
\end{definition}

\begin{proposition}\label{periodic_invariant}
Let $(Y,K, \tau, w,z)$ be a periodic knot inside a $\mathbb{Z}HS^{3}$, $Y$. Then the $\mathbb{Z} \oplus \mathbb{Z}$-filtered quasi isomorphism type of the mapping cone complex $CI_{\tau}(Y, K, \tau, w, z)$ (defined above) over $\mathbb{Z}_{2}[Q, U, U^{-1}]/(Q^{2})$ is an invariant of the equivalence class of $(Y,K,\tau, w,z)$.
\end{proposition}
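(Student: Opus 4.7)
The plan is to lift the diffeomorphism $\phi$ to a filtered chain homotopy equivalence on $CFK^{\infty}$ that intertwines with the two $\tau_K$ actions up to filtered chain homotopy, and then assemble this data into a filtered quasi-isomorphism of mapping cones. First I would invoke the naturality results of Juh\'asz--Thurston--Zemke \cite{JTZ} (as recalled in Subsection~\ref{action_knot}) to produce, from $\phi:(Y_1,K_1,w_1,z_1)\to(Y_2,K_2,w_2,z_2)$, a canonical filtered chain homotopy equivalence
\[
\phi_{*}:CFK^{\infty}(Y_1,K_1,w_1,z_1)\longrightarrow CFK^{\infty}(Y_2,K_2,w_2,z_2).
\]
Because $\phi$ preserves the ordered basepoints, $\phi_{*}$ is strictly $\mathbb{Z}\oplus\mathbb{Z}$-filtered and grading preserving.

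Next I would verify $\phi_{*}\circ \tau_{K_1}\simeq \tau_{K_2}\circ\phi_{*}$ through a filtered chain homotopy. Recall that $\tau_{K_i}=\Phi(\rho_i\tau_i\mathcal{H}_{K_i},\mathcal{H}_{K_i})\circ\rho_i\circ t_{K_i}$. Each of the three constituents commutes (up to the canonical chain homotopies supplied by \cite{JTZ,HM}) with the map induced by the based diffeomorphism $\phi$: the push-forward part by functoriality of tautological isomorphisms; the basepoint-moving isotopy $\rho_i$ because $\phi$ conjugates the arc used to define $\rho_1$ to the corresponding arc for $\rho_2$ (both chosen along the oriented knot); and the Heegaard-move map $\Phi$ by the transitivity/naturality of the homotopy equivalences between equivalent Heegaard data. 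Composing yields a filtered chain homotopy
\[
H:CFK^{\infty}(Y_1,K_1,w_1,z_1)\longrightarrow CFK^{\infty}(Y_2,K_2,w_2,z_2)
\]
with $\partial H+H\partial=\phi_{*}\tau_{K_1}+\tau_{K_2}\phi_{*}$, and by the same argument as for Proposition~\ref{transfer} one can arrange $H$ to be $\mathbb{Z}\oplus\mathbb{Z}$-filtered since all constituent homotopies are filtered.

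With $(\phi_{*},H)$ in hand, the standard mapping-cone construction gives the induced map
\[
\widetilde{\phi}(x+Qy)=\phi_{*}(x)+Q\bigl(\phi_{*}(y)+H(x)\bigr),
\]
which is a chain map between $CI_{\tau_1}(Y_1,K_1,w_1,z_1)$ and $CI_{\tau_2}(Y_2,K_2,w_2,z_2)$. Because both $\phi_{*}$ and $H$ are filtered, $\widetilde{\phi}$ preserves the $\mathbb{Z}\oplus\mathbb{Z}$-filtration inherited from $CFK^{\infty}$ on each $Q$-summand. Since $\phi_{*}$ is a filtered chain homotopy equivalence on each diagonal entry, the five-lemma applied to the short exact sequence
\[
0\to Q\cdot CFK^{\infty}[-1]\to CI_{\tau}\to CFK^{\infty}\to 0
\]
shows that $\widetilde{\phi}$ is a quasi-isomorphism; applying the same argument on each filtered subcomplex $\mathcal{F}(i,j)$ upgrades this to a $\mathbb{Z}\oplus\mathbb{Z}$-filtered quasi-isomorphism (equivalently, a filtered homotopy equivalence up to homotopy).

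The main obstacle I anticipate is ensuring that \emph{all} the chain homotopies involved, both those implementing $\phi_{*}\tau_{K_1}\simeq \tau_{K_2}\phi_{*}$ and those implementing independence of $\tau_K$ from choices of Heegaard data and of the finger-moving arc $\rho$, can be simultaneously chosen to respect the $\mathbb{Z}\oplus\mathbb{Z}$-filtration. This is not automatic from the $3$-manifold case of \cite{HM}; one must work in the doubly-pointed category and invoke the filtered refinements of the naturality package, essentially using that each Heegaard move supplied by \cite{JTZ} in the complement of $\{w,z\}$ induces a filtered map, and that the associated higher homotopies arise from counts of pseudo-holomorphic polygons in classes with $n_z=n_z$ and $n_w=n_w$ on each edge. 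Once these filtered refinements are in place, the remainder of the argument is a formal diagram chase in the mapping-cone formalism.
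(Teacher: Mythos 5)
Your proof is correct and follows essentially the same strategy as the paper's: exhibit a filtered chain homotopy commuting $\phi_*$ past the two $\tau_K$ actions, then apply the mapping-cone formalism. The paper simply compresses your step-by-step verification of $\phi_*\tau_{K_1}\simeq\tau_{K_2}\phi_*$ into a single citation of Zemke's diffeomorphism invariance of link cobordism maps \cite[Theorem A]{Zemkelinkcobord}, which packages the commutation with $t_K$, $\rho$, and $\Phi$ (and the filtration-compatibility you flag) all at once.
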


\begin{proof}
Let $(Y_1,K_1,\tau_1)$ and  $(Y_2,K_2,\tau_2)$ be two tuples in the same equivalence class. The \textit{diffeomorphism invariance of the link cobordisms} proved by Zemke \cite[Theorem A]{Zemkelinkcobord} implies that the following diagram commutes up to chain homotopy. 
\[\begin{tikzcd}[column sep =large, row sep =large]
CFK^{+}(Y_1, K_1) \arrow{r}{\phi} \arrow{d}{\tau_{K_1}} & CFK^{+}(Y_2, K_2) \arrow{d}{\tau_{K_2}} \\
CFK^{+}(Y_1, K_1) \arrow{r}{\phi}  & CFK^{+}(Y_2, K_2)
\end{tikzcd}
\]
Hence this induces a quasi-isomorphism between the corresponding mapping cone complexes which respects the filtration.
\end{proof}

In a similar manner, for a strongly invertible knot $(Y,K, \tau, w,z)$, we refer to the mapping cone of 
\[
Q.(\mathrm{id} + \iota_K \tau_K): CFK^{\infty}(Y, K, w, z) \rightarrow Q.CFK^{\infty}(Y, K, w, z)[-1]
\]
as $CI_{\iota \tau}(Y, K, \tau, w, z)$. We have:
\begin{proposition}\label{strong_invariant}
Let $(Y,K, \tau, w,z)$ be a strongly invetible knot inside a $\mathbb{Z}HS^{3}$, $Y$. Then the $\mathbb{Z} \oplus \mathbb{Z}$-filtered quasi isomorphism type of the mapping cone complex $CI_{\iota \tau}(Y, K, \tau, w, z)$ over $\mathbb{Z}_{2}[Q, U, U^{-1}]/(Q^{2})$ is an invariant of the equivalence class of $(Y,K,\tau, w,z)$.
\end{proposition}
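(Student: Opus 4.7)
The plan is to adapt the proof of Proposition~\ref{periodic_invariant} to the strongly invertible setting. The key preliminary observation is that, although $\iota_K$ and $\tau_K$ are each only skew-filtered (by Proposition~\ref{strong} and the discussion in Subsection~\ref{action_knot}), their composition $\iota_K\circ \tau_K$ reverses the filtration twice and is therefore genuinely $\mathbb{Z}\oplus\mathbb{Z}$-filtered. Consequently the mapping cone of $Q(\mathrm{id}+\iota_K\tau_K)$ acquires a natural $\mathbb{Z}\oplus\mathbb{Z}$-filtration, exactly as in the periodic case; the identity summand and the $\iota_K\tau_K$ summand are both filtered, so there is no obstruction analogous to the one that prevents $CI(Y,K,w,z)$ from carrying a filtration in \cite[Proposition 6.3]{HM}.

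Next I would take two equivalent strongly invertible knots $(Y_i,K_i,\tau_i,w_i,z_i)$ in the sense of Definition~\ref{periodic_equivalence} (with the strong involution replacing the periodic one), with intertwining diffeomorphism $\phi:(Y_1,K_1,w_1,z_1)\to(Y_2,K_2,w_2,z_2)$. The diffeomorphism invariance of link cobordism maps \cite[Theorem A]{Zemkelinkcobord} yields the commutativity up to filtered chain homotopy of
\[
\begin{tikzcd}[column sep=large,row sep=large]
CFK^{\infty}(Y_1,K_1) \arrow{r}{\phi_*} \arrow{d}{\tau_{K_1}} & CFK^{\infty}(Y_2,K_2) \arrow{d}{\tau_{K_2}} \\
CFK^{\infty}(Y_1,K_1) \arrow{r}{\phi_*} & CFK^{\infty}(Y_2,K_2)
\end{tikzcd}
\]
and, by the same naturality argument applied in \cite[Proposition 6.3]{HM}, the analogous square for $\iota_{K_1}$ and $\iota_{K_2}$. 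Composing the two squares shows that $\phi_*$ intertwines $\iota_{K_1}\tau_{K_1}$ with $\iota_{K_2}\tau_{K_2}$ up to chain homotopy. Since $\phi_*$ itself is a filtered chain homotopy equivalence (it is induced by a basepoint-preserving diffeomorphism of the doubly-based knot), this intertwining relation is filtered.

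The final step is then a standard piece of homological algebra: a filtered chain homotopy commuting square between two mapping cones, in which the vertical arrows of the square are homotopy equivalences, induces a filtered quasi-isomorphism on the cones. Concretely, one assembles the map $\phi_*\oplus Q\phi_*$ on $CI_{\iota\tau}(Y_1,K_1,\tau_1,w_1,z_1)\to CI_{\iota\tau}(Y_2,K_2,\tau_2,w_2,z_2)$, with a correction term coming from the chain homotopy $H$ satisfying $\phi_*\circ \iota_{K_1}\tau_{K_1}+\iota_{K_2}\tau_{K_2}\circ\phi_*=\partial H+H\partial$; this correction contributes a $Q$-term that is automatically filtered because $H$ is filtered (as a homotopy between filtered maps). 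The resulting map respects the $\mathbb{Z}\oplus\mathbb{Z}$-filtration and is a $\mathbb{Z}_2[Q,U,U^{-1}]/(Q^2)$-equivariant quasi-isomorphism by the usual five-lemma argument applied to the filtration on the cone.

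The main subtle point, and the only place where the strongly invertible case differs in flavor from the periodic one, is verifying that $\iota_K\tau_K$ is genuinely filtered rather than skew-filtered; once this is in hand the rest is a direct transcription of Proposition~\ref{periodic_invariant}. Independence from the choice of Heegaard data used to represent $\iota_K$ and $\tau_K$ has already been established in Subsection~\ref{action_knot} and in \cite{HM}, so no further bookkeeping is required.
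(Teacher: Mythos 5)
Your proof is correct and follows essentially the same approach as the paper, which states this proposition without an explicit proof, leaving it as analogous to Proposition~\ref{periodic_invariant}. Your writeup correctly supplies the one genuinely new observation — that $\iota_K\tau_K$, being the composition of two skew-filtered maps, is honestly $\mathbb{Z}\oplus\mathbb{Z}$-filtered so the mapping cone carries the filtration — and the rest (the commuting square from diffeomorphism invariance of link cobordisms, and the induced filtered quasi-isomorphism on cones) is exactly the argument the paper gives for the periodic case.
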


Hence we have:
\begin{proof}[Proof of Proposition~\ref{knotinvariant}]
Follows from Proposition~\ref{periodic_invariant} and Proposition~\ref{strong_invariant}.

\end{proof}

It follows from the computations in Section~\ref{computation} that for any L-space knots both $CI_{\iota \tau}$ (for strongly invertible knots) and $CI_{\tau}$ (for periodic knots), become trivial i.e. determined by the $CFK^{\infty}(K)$. On the other hand for thin knots, both invariants are potentially non-trivial owing to the Proposition~\ref{thin}. Moreover, by sending a conjugacy class of $[(Y, K ,\tau, w, z)]$ to the conjugacy class of $[(Y_{p}(K),\tau,[0])]$ (for a large $p$, which is an odd number), we see that equivariant involutive correction terms are invariants of $[(Y, K ,\tau, w, z)]$. Hence they can be used to demonstrate the non-triviality of $CI_{\tau}$ and $CI_{\iota \tau}$. 

\begin{example}
Let $(4_1, \tau^{p})$ be the figure-eight knot with a periodic symmetry $\tau^{p}$, as in Figure~\ref{figure_eight_periodic}. Then the quasi-isomorphism type of the mapping cone $CI_{\tau}(4_1, \tau^{p}, w, z)$ is non-trivial. One may see this by computing the mapping cone explicitly, staring from the description of $\tau^{p}$ action on $CFK^{\infty}(K)$. However, instead of going through the full computation, we show that the invariant is non-trivial by appealing to the invariant $\underline{d}_{\tau}$ and showing that it differs from the $d$-invariant. By examining the action in Figure~\ref{figure_eight_periodic}, we see that $\underline{d}_{\tau}(A^{-}_{0})=-2$, while $d(A^{-}_{0})=0$. The grading shift from the Relation~\ref{gradingshift} then implies 
\[
\underline{d}_{\tau}(S^{3}_{p}(4_1),[0])=\frac{p-1}{4}-2, \; \; d(S^{3}_{p}(4_1),[0])=\frac{p-1}{4}.
\]
\end{example}

\bibliographystyle{amsalpha}
\bibliography{equivariant_surgery}

\providecommand{\bysame}{\leavevmode\hbox to3em{\hrulefill}\thinspace}
\providecommand{\MR}{\relax\ifhmode\unskip\space\fi MR }
\providecommand{\MRhref}[2]{%
  \href{http://www.ams.org/mathscinet-getitem?mr=#1}{#2}
}
\providecommand{\href}[2]{#2}
\begin{thebibliography}{DHST18}

\bibitem[AKS20]{AKS}
Antonio Alfieri, Sungkyung Kang, and Andr{\'a}s~I Stipsicz, \emph{Connected
  {F}loer homology of covering involutions}, Mathematische Annalen \textbf{377}
  (2020), no.~3, 1427--1452.

\bibitem[AM97]{AMexotic}
Selman Akbulut and Rostislav Matveyev, \emph{Exotic structures and adjunction
  inequality}, Turkish J. Math. \textbf{21} (1997), no.~1, 47--53.

\bibitem[BI21]{boyle2021equivariant}
Keegan Boyle and Ahmad Issa, \emph{Equivariant 4-genera of strongly invertible
  and periodic knots}, arXiv preprint arXiv:2101.05413 (2021).

\bibitem[BL20]{baker2020asymmetric}
Kenneth~L Baker and John Luecke, \emph{Asymmetric {L}--space knots}, Geometry
  \& Topology \textbf{24} (2020), no.~5, 2287--2359.

\bibitem[Bon83]{Bonahon}
F.~Bonahon, \emph{Ribbon fibred knots, cobordism of surface diffeomorphisms and
  pseudo-{A}nosov diffeomorphisms}, Math. Proc. Cambridge Philos. Soc.
  \textbf{94} (1983), no.~2, 235--251. \MR{715046}

\bibitem[BP21]{borodzik2017khovanov}
Maciej Borodzik and Wojciech Politarczyk, \emph{Khovanov homology and periodic
  links}, Indiana Univ. Math. J. \textbf{70} (2021), no.~1, 235--267.
  \MR{4226654}

\bibitem[CH81]{cassonrational}
Andrew Casson and John Harer, \emph{Some homology lens spaces which bound
  rational homology balls}, Pacific Journal of Mathematics \textbf{96} (1981),
  no.~1, 23--36.

\bibitem[DHM20]{dai2020corks}
Irving Dai, Matthew Hedden, and Abhishek Mallick, \emph{Corks, involutions, and
  {H}eegaard {F}loer homology}, arXiv preprint arXiv:2002.02326 (2020).

\bibitem[DHST18]{dai2018infinite}
Irving Dai, Jennifer Hom, Matthew Stoffregen, and Linh Truong, \emph{An
  infinite-rank summand of the homology cobordism group}, arXiv preprint
  arXiv:1810.06145 (2018).

\bibitem[DMS22]{abhishek2022equivariant}
Irving Dai, Abhishek Mallick, and Matthew Stoffregen, \emph{Equivariant knots
  and knot {F}loer homology}, arXiv preprint arXiv:2201.01875 (2022).

\bibitem[DN06]{naikequivariant}
James~F Davis and Swatee Naik, \emph{Alexander polynomials of equivariant slice
  and ribbon knots in s3}, Transactions of the American Mathematical Society
  (2006), 2949--2964.

\bibitem[Fro96]{froyshov1996seiberg}
Kim~A Froyshov, \emph{The {S}eiberg-{W}itten equations and four-manifolds with
  boundary}, Mathematical Research Letters \textbf{3} (1996), no.~3, 373--390.

\bibitem[Hay21]{hayden2021corks}
Kyle Hayden, \emph{Corks, covers, and complex curves}, arXiv preprint
  arXiv:2107.06856 (2021).

\bibitem[Hen12]{hendricks2012localization}
Kristen Hendricks, \emph{Localization and the link {F}loer homology of
  doubly-periodic knots}, arXiv preprint arXiv:1206.5989 (2012).

\bibitem[HHL21]{hendricks2021applications}
Kristen Hendricks, Jennifer Hom, and Tye Lidman, \emph{Applications of
  involutive {H}eegaard {F}loer homology}, Journal of the Institute of
  Mathematics of Jussieu \textbf{20} (2021), no.~1, 187--224.

\bibitem[HLN06]{hillman2006twisted}
Jonathan~A Hillman, Charles Livingston, and Swatee Naik, \emph{Twisted
  {A}lexander polynomials of periodic knots}, Algebraic \& Geometric Topology
  \textbf{6} (2006), no.~1, 145--169.

\bibitem[HLS16]{flexible_equivariant}
Kristen Hendricks, Robert Lipshitz, and Sucharit Sarkar, \emph{A flexible
  construction of equivariant {F}loer homology and applications}, J. Topol.
  \textbf{9} (2016), no.~4, 1153--1236. \MR{3620455}

\bibitem[HM17]{HM}
Kristen Hendricks and Ciprian Manolescu, \emph{Involutive {H}eegaard {F}loer
  homology}, Duke Math. J. \textbf{166} (2017), no.~7, 1211--1299.

\bibitem[HMZ18]{HMZ}
Kristen Hendricks, Ciprian Manolescu, and Ian Zemke, \emph{A connected sum
  formula for involutive {H}eegaard {F}loer homology}, Selecta Math. (N.S.)
  \textbf{24} (2018), no.~2, 1183--1245. \MR{3782421}

\bibitem[JN16]{jabukaperiodic}
Stanislav Jabuka and Swatee Naik, \emph{Periodic knots and {H}eegaard {F}loer
  correction terms}, J. Eur. Math. Soc. (JEMS) \textbf{18} (2016), no.~8,
  1651--1674. \MR{3519536}

\bibitem[JTZ12]{JTZ}
Andr{\'a}s {Juh{\'a}sz}, Dylan~P. {Thurston}, and Ian {Zemke},
  \emph{{Naturality and mapping class groups in Heegaard Floer homology}},
  arXiv e-prints (2012), arXiv:1210.4996.

\bibitem[Juh16]{Juhasz}
Andr\'{a}s Juh\'{a}sz, \emph{Cobordisms of sutured manifolds and the
  functoriality of link {F}loer homology}, Adv. Math. \textbf{299} (2016),
  940--1038.

\bibitem[JZ18]{juhasz2018stabilization}
Andr{\'a}s Juh{\'a}sz and Ian Zemke, \emph{Stabilization distance bounds from
  link {F}loer homology}, arXiv preprint arXiv:1810.09158 (2018).

\bibitem[Kre76]{Kreck}
M.~Kreck, \emph{Bordism of diffeomorphisms}, Bull. Amer. Math. Soc. \textbf{82}
  (1976), no.~5, 759--761. \MR{407864}

\bibitem[Lic62]{lickorish1962representation}
WB~Raymond Lickorish, \emph{A representation of orientable combinatorial
  3-manifolds}, Annals of Mathematics (1962), 531--540.

\bibitem[LW21]{lobb2021refinement}
Andrew Lobb and Liam Watson, \emph{A refinement of {K}hovanov homology},
  Geometry \& Topology \textbf{25} (2021), no.~4, 1861--1917.

\bibitem[Mel79]{Melvin}
Paul Melvin, \emph{Bordism of diffeomorphisms}, Topology \textbf{18} (1979),
  no.~2, 173--175.

\bibitem[MO08]{manolescuquasi}
Ciprian Manolescu and Peter Ozsv\'{a}th, \emph{On the {K}hovanov and knot
  {F}loer homologies of quasi-alternating links}, Proceedings of {G}\"{o}kova
  {G}eometry-{T}opology {C}onference 2007, G\"{o}kova Geometry/Topology
  Conference (GGT), G\"{o}kova, 2008, pp.~60--81. \MR{2509750}

\bibitem[Mon75]{Montesinos1975}
Jos{\'e}~M. Montesinos, \emph{Surgery on links and double branched covers of
  {$S\sp{3}$}}, Knots, groups, and $3$-manifolds (Papers dedicated to the
  memory of R. H. Fox), Princeton Univ. Press, Princeton, N.J., 1975,
  pp.~227--259. Ann. of Math. Studies, No. 84. \MR{MR0380802 (52 \#1699)}

\bibitem[MOS09]{manolescucombinatorial}
Ciprian Manolescu, Peter Ozsv{\'a}th, and Sucharit Sarkar, \emph{A
  combinatorial description of knot {F}loer homology}, Annals of Mathematics
  (2009), 633--660.

\bibitem[Nai97]{naik1997new}
Swatee Naik, \emph{New invariants of periodic knots}, Mathematical Proceedings
  of the Cambridge Philosophical Society, vol. 122, Cambridge University Press,
  1997, pp.~281--290.

\bibitem[OS03a]{OSabsgr}
Peter Ozsv{\'a}th and Zolt{\'a}n Szab{\'o}, \emph{Absolutely graded {F}loer
  homologies and intersection forms for four-manifolds with boundary}, Adv.
  Math. \textbf{173} (2003), no.~2, 179--261.

\bibitem[OS03b]{OSalternating}
\bysame, \emph{Heegaard {F}loer homology and alternating knots}, Geom. Topol.
  \textbf{7} (2003), 225--254.

\bibitem[OS03c]{OStau}
\bysame, \emph{Knot {F}loer homology and the four-ball genus}, Geometry \&
  Topology \textbf{7} (2003), no.~2, 615--639.

\bibitem[OS03d]{FourBall}
Peter~S. Ozsv{\'a}th and Zolt{\'a}n Szab{\'o}, \emph{Knot {F}loer homology and
  the four-ball genus}, Geom. Topol. \textbf{7} (2003), 615--639.

\bibitem[OS04a]{OSknots}
Peter Ozsv{\'a}th and Zolt{\'a}n Szab{\'o}, \emph{Holomorphic disks and knot
  invariants}, Adv. Math. \textbf{186} (2004), no.~1, 58--116.

\bibitem[OS04b]{OS3manifolds1}
\bysame, \emph{Holomorphic disks and topological invariants for closed
  three-manifolds}, Ann. of Math. (2) \textbf{159} (2004), no.~3, 1027--1158.

\bibitem[OS05]{Lens}
P.~Ozsv{\'a}th and Z.~Szab{\'o}, \emph{On knot {F}loer homology and lens space
  surgeries}, Topology \textbf{44} (2005), no.~6, 1281--1300. \MR{MR2168576
  (2006f:57034)}

\bibitem[OS06]{OSsmooth4}
Peter Ozsv{\'a}th and Zolt{\'a}n Szab{\'o}, \emph{Holomorphic triangles and
  invariants for smooth four-manifolds}, Adv. Math. \textbf{202} (2006), no.~2,
  326--400.

\bibitem[OS08]{integersurgeryos}
\bysame, \emph{Knot {F}loer homology and integer surgeries}, Algebraic \&
  Geometric Topology \textbf{8} (2008), no.~1, 101--153.

\bibitem[Pet13]{Petkova}
Ina Petkova, \emph{Cables of thin knots and bordered {H}eegaard {F}loer
  homology}, Quantum Topol. \textbf{4} (2013), no.~4, 377--409.

\bibitem[Ras03]{rasmussenthesis}
Jacob~Andrew Rasmussen, \emph{Floer homology and knot complements}, ProQuest
  LLC, Ann Arbor, MI, 2003, Thesis (Ph.D.)--Harvard University. \MR{2704683}

\bibitem[Sak01]{sakuma2001surgery}
Makoto Sakuma, \emph{Surgery description of orientation-preserving periodic
  maps on compact orientable 3-manifolds}, Rend. Istit. Mat. Univ. Trieste
  \textbf{32} (2001), no.~suppl. 1.

\bibitem[Sar15]{sarkar2015moving}
Sucharit Sarkar, \emph{Moving basepoints and the induced automorphisms of link
  {F}loer homology}, Algebraic \& Geometric Topology \textbf{15} (2015), no.~5,
  2479--2515.

\bibitem[Sav13]{savelievinvariants}
Nikolai Saveliev, \emph{Invariants of homology 3-spheres}, vol. 140, Springer
  Science \& Business Media, 2013.

\bibitem[{Sch}24]{Schreier_torus}
O.~{Schreier}, \emph{{\"Uber die Gruppen \(A^aB^b = 1\).}}, {Abh. Math. Semin.
  Univ. Hamb.} \textbf{3} (1924), 167--169.

\bibitem[SZ18]{stoffregen2018localization}
Matthew Stoffregen and Melissa Zhang, \emph{Localization in {K}hovanov
  homology}, arXiv preprint arXiv:1810.04769 (2018).

\bibitem[Tra91]{traczyk1991periodic}
Pawel Traczyk, \emph{Periodic knots and the skein polynomial}, Inventiones
  mathematicae \textbf{106} (1991), no.~1, 73--84.

\bibitem[Wal60]{wallace1960modifications}
Andrew~H Wallace, \emph{Modifications and cobounding manifolds}, Canadian
  Journal of Mathematics \textbf{12} (1960), 503--528.

\bibitem[Wal69]{waldhauseninvolutionen}
Friedhelm Waldhausen, \emph{{\"U}ber involutionen der 3-sph{\"a}re}, Topology
  \textbf{8} (1969), no.~1, 81--91.

\bibitem[Wat17]{watson2017khovanov}
Liam Watson, \emph{{K}hovanov homology and the symmetry group of a knot},
  Advances in Mathematics \textbf{313} (2017), 915--946.

\bibitem[Zem15]{Zemkegraph}
Ian Zemke, \emph{Graph cobordisms and {H}eegaard {F}loer homology}, 2015,
  preprint, arXiv:1512.01184.

\bibitem[Zem16]{Zemkelinkcobord}
\bysame, \emph{Link cobordisms and functoriality in link {F}loer homology},
  2016, preprint, arXiv:1610.05207.

\bibitem[Zem17]{zemke2017quasistabilization}
\bysame, \emph{Quasistabilization and basepoint moving maps in link {F}loer
  homology}, Algebraic \& geometric topology \textbf{17} (2017), no.~6,
  3461--3518.

\bibitem[Zem19]{Zemkeconnected}
\bysame, \emph{Connected sums and involutive knot {F}loer homology},
  Proceedings of the London Mathematical Society \textbf{119} (2019), no.~1,
  214--265.

\end{thebibliography}

\end{document}